\def\jb#1{\langle#1\rangle}
\def\norm#1{\|#1\|}
\def\normo#1{\left\|#1\right\|}
\newcommand{\E}{\mathcal{E}}
\newcommand{\LL}{\mathcal{L}}
\newcommand{\M}{\mathcal{M}}
\newcommand{\cS}{\mathcal{S}}
\newcommand{\cH}{\mathcal{H}}
\newcommand{\K}{\mathcal{K}}
\newcommand{\C}{\mathbb{C}}
\newcommand{\N}{\mathbb{N}}
\newcommand{\R}{\mathbb{R}}
\newcommand{\al}{\alpha}
\newcommand{\fy}{\varphi}
\newcommand{\om}{\omega}
\newcommand{\De}{\Delta}
\newcommand{\na}{\nabla}
\newcommand{\les}{\lesssim}
\newcommand{\EQ}[1]{\begin{equation}\begin{split} #1 \end{split}\end{equation}}
\newcommand{\Del}[1]{}
\newcommand{\CAS}[1]{\begin{cases} #1 \end{cases}}
\newcommand{\pt}{&}
\numberwithin{equation}{section}
\newtheorem{thm}{Theorem}[section]
\newtheorem{cor}[thm]{Corollary}
\newtheorem{lem}[thm]{Lemma}
\newtheorem{prop}[thm]{Proposition}
\theoremstyle{remark}
\newtheorem{rem}{Remark}
\begin{document}
\subjclass[2010]{35L70, 35Q55}
\keywords{Nonlinear Schr\"odinger equation; combined nonlinearities, global wellposedness; scattering, blowup.}

\title[mass-critical combined Schr\"odinger equation]{Global well-posedness and scattering for nonlinear Schr\"odinger equations with combined nonlinearities in the radial case}

\author{Xing Cheng}
\address{Wu Wen-Tsun Key Laboratory of Mathematics and School of Mathematical Sciences, University of Science and Technology of China,
Hefei 230026,\  Anhui,\  China} \email{chengx@mail.ustc.edu.cn}

\author{Changxing Miao}
\address{Institute of Applied Physics and Computational Mathematics,
P. O. Box 8009,\ Beijing 100088,\ China} \email{miao\_changxing@iapcm.ac.cn}

\author{Lifeng Zhao}
\address{Wu Wen-Tsun Key Laboratory of Mathematics and School of Mathematical Sciences, University of Science and Technology of China, Hefei 230026,\  Anhui,\  China}\email{zhaolf@ustc.edu.cn}

\begin{abstract}We consider the Cauchy problem for the nonlinear
Schr\"odinger equation with combined nonlinearities, one of which is defocusing mass-critical and the other is focusing energy-critical or
 energy-subcritical. The threshold is given by means of variational argument. We establish the profile decomposition in $H^1(\R^d)$ and
  then utilize the concentration-compactness method to show the global wellposedness and scattering versus blowup in $H^1(\Bbb R^d)$
  below the threshold for radial data when $d\leq4$. 
\end{abstract}

\maketitle

\tableofcontents

\section{Introduction}
We will consider the Cauchy problem for the nonlinear Schr\"odinger equation
\begin{equation}\label{eq1.1}
\CAS {i\partial_t u  + \Delta u = |u|^\frac4d u - |u|^{p-1} u,\\
     u(0) = u_0 \in H^1(\R^d),}
\end{equation}
where $u: \R \times \R^d \to \C$ is a complex-valued function,
$1+\tfrac{4}{d}<p\leq 1+\tfrac{4}{d-2},\  d = 3,\, 4$ and $1 +
\tfrac4d < p < \infty,\  d = 1,\,2$.

The equation \eqref{eq1.1} has the following mass and energy:
\begin{equation}\label{eq1.2}
  \M(u) = \int_{\R^d} |u|^2\,\mathrm{d}x,
\end{equation}
\begin{equation}\label{eq1.3}
   \E(u) = \int_{\R^d} \Big(\tfrac12 |\nabla u|^2 - \tfrac1{p+1} |u|^{p+1} + \tfrac{d}{2(d+2)} |u|^\frac{2(d+2)}d\Big) \,\mathrm{d}x.
\end{equation}
The equation (\ref{eq1.1}) is a special case of the general nonlinear Schr\"odinger equation with combined nonlinearities
\begin{equation}\label{gmc}
\CAS {i\partial_t u  + \Delta u = \mu_1|u|^{p_1-1} u + \mu_2|u|^{p_2-1} u,\\
     u(0) = u_0 \in H^1(\R^d),}
 \end{equation}
 where $1 < p_1,p_2\leq 1+\frac{4}{d-2},\, \rm{for}\  d\ge 3$, $\ 1< p_1, p_2 < \infty,\,  \rm{for}\ d = 1,\,2$, $\mu_1,\mu_2\in\{\pm1\}$.
 This equation arises in the study of the Hartree approximation and quasi-classically within the frame of the secondary quantization
 in the boson gas with many body $\delta$-function interaction. It can also be used to describe the effect of saturation of nonlinear refractive index.
  At the same time, the equation of nuclear hydrodynamics with effective Skyrme's forces reduces quasi-classically to \eqref{gmc}. For more physical background, we refer the reader to \cite{Barashenkov-Gocheva-Makhankov-Puzynin, Pelinovsky-Afanasjev-Kivshar} and the references therein.

The local theory for \eqref{gmc} in the energy space follows from the standard method of T. Cazenave, F. B. Weissler \cite{Cazenave-Weissler}, see also \cite{Cazenave}.
Proceeded by \cite{X-Zhang}, T. Tao, M. Visan and X. Zhang considered various cases in \cite{Tao-Visan-Zhang}. They proved global wellposedness and scattering of the solution to the equation (\ref{gmc}) for finite energy data when $\mu_1=\mu_2=+1$ and $1 + \frac{4}{d}\leq p_1<p_2  \le 1 +  \frac{4}{d-2},\  d \ge 3$. The case when $p_1= 1 + \frac{4}{d}$ and $p_2= 1 + \frac{4}{d-2}$ is the most difficult. In \cite{Tao-Visan-Zhang}, the low frequencies of the solution are well approximated by the $L^2$ critical problem and the high frequencies are well approximated by the energy-critical problem. The medium  frequencies are controlled by the Morawetz estimates.

In \cite{Miao-Xu-Zhao1}, C. Miao, G. Xu and L. Zhao considered the
case where $\mu_1=+1$, $\mu_2=-1$ and $1 + \tfrac{4}{d}<p_1<p_2 = 1
+ \tfrac{4}{d-2}$, $d = 3$. The threshold was given by variational
method due to the energy trapping property as in
\cite{Ibrahim-Masmoudi-Nakanishi}. They established the linear profile decomposition in $H^1(\R^d)$ in the spirt of \cite{Ibrahim-Masmoudi-Nakanishi}. By using this new profile decomposition, they reduced the scattering problem to the extinction of the critical element. The critical element can then be excluded
by using the Virial identity. They showed the dichotomy of
global wellposedness and scattering versus blow-up phenomenon below
the threshold for radial solutions.  The radial assumption was
removed in dimensions five and higher  in \cite{Miao-Xu-Zhao2}. While, for the case of
lower dimensions $d\in\{3,4\}$, how to remove the radial
assumption is still open in this field.

If $\mu_1=\mu_2=-1$ and $ 1 + \tfrac{4}{d}<p_1<p_2 = 1 +
\tfrac{4}{d-2}$, $d\ge 5$, the Cauchy problem was considered in
\cite{Akahori-Ibrahim-Kikuchi-Nawa1, Akahori-Ibrahim-Kikuchi-Nawa2}. After giving existence of the ground state based on the
idea in \cite{Brezis-Nirenberg} and
\cite{Ibrahim-Masmoudi-Nakanishi}, they showed a sufficient and
necessary condition for the scattering in the spirit of
\cite{Kenig-Merle} by using the profile decomposition in $\dot{H}^1(\R^d)$ and the global wellposedness and scattering result in \cite{Killip-Visan}.

In this paper, we aim to look for the suitable threshold to study the global well-posedness and scattering versus blowup of \eqref{eq1.1}.
 Before stating the main theorem, we introduce some notations. For $\varphi \in H^1(\R^d)$, we denote the scaling
$(T_\lambda \varphi)(x) = \lambda^\frac{d}2 \varphi(\lambda x)$.
For any $ \omega > 0$, we have the Lyapunov functional 
\begin{equation}\label{eq1.4}
\cS_\omega (\varphi) = \E(\varphi) + \tfrac12\omega \M(\varphi).
\end{equation}
We also denote the scaling derivative of $\cS_\omega (\varphi)$ by $\K(\varphi)$,
\EQ{ \label{eq1.6}
\K(\varphi) \pt=  \LL\,  \cS_\omega(\varphi) = \tfrac{\mathrm{d}}{\mathrm{d}\lambda}\big|_{\lambda= 1} \cS_\omega (T_\lambda \varphi)
=\tfrac{\mathrm{d}}{\mathrm{d}\lambda}\big|_{\lambda = 1} \E(T_\lambda \varphi)\\
               \pt    = \int_{\R^d} |\nabla \varphi|^2 - \tfrac{d(p-1)}{2(p+1)} |\varphi|^{p+1} + \tfrac{d}{d+2} |\varphi|^\frac{2(d+2)}d \,\mathrm{d}x.}
Let
\begin{equation}\label{eq1.7}
  m_\omega = \inf\{ \cS_\omega(\varphi): \varphi \in H^1(\R^d)\setminus \{0\}, \K (\varphi) = 0\},
\end{equation}
then we have
\begin{prop}\label{bcpr1.1}
For $1 + \frac4d < p < 1 + \frac4{d-2},\, d\ge 3, \text{ and } \ 1 + \frac4d < p < \infty,\, d= 1,2$, $\om > 0$,
 we have $m_\om > 0$. Moreover, $m_\om = \cS_\om(Q)$,
where $Q\in H^1(\R^d)$ is the ground state of
\begin{equation} \label{eq2.8}
- \om Q + \De Q + |Q|^{p-1} Q - |Q|^\frac4d Q = 0.
\end{equation}
\end{prop}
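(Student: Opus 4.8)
The plan is the classical constrained‑variational (Nehari / "mountain‑pass") argument, and the two standing inequalities play complementary roles: writing $s:=\tfrac{d(p-1)}2$, the condition $p>1+\tfrac4d$ is exactly $s>2$, while $p<1+\tfrac4{d-2}$ (no constraint when $d=1,2$) is equivalent both to $2<p+1<2^\ast$, where $2^\ast:=\tfrac{2d}{d-2}$ for $d\ge3$ and $2^\ast:=\I$ otherwise (so that $H^1(\R^d)\hookrightarrow L^{p+1}$ and $H^1_{\mathrm{rad}}(\R^d)\hookrightarrow L^{p+1}$ compactly), and to $p+1-s>0$. The first step is the elementary identity obtained by subtracting $\tfrac1s\K$ from $\cS_\om$,
\[
\cS_\om(\varphi)-\tfrac1s\K(\varphi)=\Bigl(\tfrac12-\tfrac1s\Bigr)\Bigl(\norm{\na\varphi}_2^2+\tfrac d{d+2}\norm{\varphi}_{2(d+2)/d}^{2(d+2)/d}\Bigr)+\tfrac\om2\norm{\varphi}_2^2=:J(\varphi),
\]
which is a sum of nonnegative terms since $s>2$. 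Hence $\cS_\om=J$ on $\{\K=0\}$ and $m_\om\ge0$; moreover, if $\K(\varphi)=0$ with $\varphi\ne0$ then $\norm{\na\varphi}_2^2\le\tfrac{d(p-1)}{2(p+1)}\norm{\varphi}_{p+1}^{p+1}$, so by Gagliardo--Nirenberg $\norm{\na\varphi}_2^2\lesssim\norm{\na\varphi}_2^{s}\norm{\varphi}_2^{p+1-s}$, whence $\norm{\na\varphi}_2+\norm{\varphi}_2\gtrsim1$ (both exponents are positive and $(s-2)+(p+1-s)=p-1$), and feeding this into the identity yields $\cS_\om(\varphi)=J(\varphi)\gtrsim_{\om,d,p}1$. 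Thus $m_\om>0$.

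Next I would produce a minimizer for \eqref{eq1.7}, exploiting the scaling $T_\la$. For fixed $\varphi\ne0$, the map $\la\mapsto\cS_\om(T_\la\varphi)$ equals $\tfrac\om2\norm{\varphi}_2^2>0$ at $\la=0$, tends to $-\I$ as $\la\to\I$ (since $s>2$), and has a unique positive critical point $\la_0(\varphi)$ — the unique zero of $\la\mapsto\K(T_\la\varphi)$ — at which it is maximal; consequently $m_\om=\inf_{\varphi\ne0}\sup_{\la>0}\cS_\om(T_\la\varphi)=\inf\{J(\varphi):\varphi\ne0,\ \K(\varphi)\le0\}$ (for $\K(\varphi)\le0$ one has $\la_0(\varphi)\le1$, and $\la\mapsto J(T_\la\varphi)$ is increasing, so $J(T_{\la_0(\varphi)}\varphi)\le J(\varphi)$). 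Given a minimizing sequence, I would replace each term by $|\varphi|$ (diamagnetic inequality), then by its Schwarz rearrangement — which lowers $\norm{\na\cdot}_2$ and preserves the mass and all $L^q$-norms, $q<\I$, hence does not increase $\sup_\la\cS_\om(T_\la\cdot)$ — and finally rescale it onto $\{\K=0\}$; this produces a minimizing sequence $\{\psi_n\}\subset\{\K=0\}$ of radial, radially decreasing functions, bounded in $H^1$ by the identity above, with $\norm{\psi_n}_{p+1}\gtrsim1$. Radially decreasing $H^1$-functions decay uniformly at infinity, so along a subsequence $\psi_n\rightharpoonup\varphi$ in $H^1$, $\psi_n\to\varphi$ in $L^{p+1}$ (compact radial embedding) and a.e.; hence $\varphi\ne0$, and weak lower semicontinuity of $\norm{\na\cdot}_2^2$, $\norm{\cdot}_2^2$, $\norm{\cdot}_{2(d+2)/d}^{2(d+2)/d}$ combined with the $L^{p+1}$-convergence gives $\K(\varphi)\le0$ and $J(\varphi)\le\liminf J(\psi_n)=m_\om$, so $J(\varphi)=m_\om$. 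If $\K(\varphi)<0$, then $\la_0(\varphi)<1$ and $J(T_{\la_0(\varphi)}\varphi)<J(\varphi)=m_\om$ with $T_{\la_0(\varphi)}\varphi\in\{\K=0\}\setminus\{0\}$, which is impossible; hence $\K(\varphi)=0$ and $\cS_\om(\varphi)=m_\om$.

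It remains to identify $\varphi$ with a ground state of \eqref{eq2.8}. As a critical point of $\cS_\om$ on the smooth constraint $\{\K=0\}$, $\varphi$ satisfies $\cS_\om'(\varphi)=\mu\,\K'(\varphi)$ for some $\mu\in\R$. Pairing with the scaling generator $\tfrac d2\varphi+x\cdot\na\varphi$ and using $\jb{\cS_\om'(\varphi),\tfrac d2\varphi+x\cdot\na\varphi}=\K(\varphi)=0$ gives $0=\mu\,\tfrac{\mathrm d}{\mathrm d\la}\big|_{\la=1}\K(T_\la\varphi)$; a short computation using $\K(\varphi)=0$ shows $\tfrac{\mathrm d}{\mathrm d\la}\big|_{\la=1}\K(T_\la\varphi)=\tfrac{s(2-s)}{p+1}\norm{\varphi}_{p+1}^{p+1}<0$, so $\mu=0$ and $\cS_\om'(\varphi)=0$, i.e.\ $\varphi$ (nonnegative after rearrangement) solves \eqref{eq2.8}. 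Conversely, any $H^1$-solution of \eqref{eq2.8} decays exponentially since $\om>0$, so its Nehari and Pohozaev identities hold, and the very linear combination that produces $J$ shows such a solution satisfies $\K=0$; hence every solution of \eqref{eq2.8} is admissible in \eqref{eq1.7} and has $\cS_\om\ge m_\om$. Therefore $\varphi$ is a ground state $Q$, and $m_\om=\cS_\om(Q)$.

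I expect the compactness in the second paragraph to be the main obstacle, and it is exactly where the hypotheses are used: $p+1<2^\ast$ supplies the compact radial embedding into $L^{p+1}$ (hence the nontriviality of the weak limit and attainment of $m_\om$), while $p+1-s>0$, i.e.\ once more $p<1+\tfrac4{d-2}$, powers the Gagliardo--Nirenberg lower bound behind $m_\om>0$; the restriction $d\le4$ itself is not needed at this stage. As an alternative to the symmetrization argument, one may obtain $Q$ directly from the Berestycki--Lions theory for $-\De u=g(u)$ with $g(s)=|s|^{p-1}s-|s|^{4/d}s-\om s$, after which only the equivalence between the least-action level and the constrained minimum $m_\om$ — the content of the displayed identity together with the Lagrange-multiplier computation — remains to be checked.
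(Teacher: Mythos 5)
Your proposal is correct and follows essentially the same route as the paper: reduce $m_\om$ to minimizing a manifestly nonnegative functional over $\{\K\le 0\}$, pass to a radially decreasing minimizing sequence on $\{\K=0\}$ via Schwarz symmetrization and rescaling, use the compact radial embedding to get a nontrivial minimizer, kill the Lagrange multiplier through the scaling derivative $\LL\K<0$ on $\{\K=0\}$, and invoke the Derrick/Pohozaev identity to see every $H^1$ solution of \eqref{eq2.8} lies on $\{\K=0\}$. The only (harmless) deviations are that you work with $J=\cS_\om-\tfrac1s\K$ instead of the paper's $\cH_\om=\cS_\om-\tfrac12\K$, prove $m_\om>0$ up front by Gagliardo--Nirenberg rather than reading it off the minimizer, and get nontriviality of the weak limit from the quantitative bound $\norm{\psi_n}_{L^{p+1}}\gtrsim 1$ instead of the paper's Lemma \ref{le2.2}.
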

\begin{prop}\label{pr1.1}
For $p = 1 + \frac4{d-2},\, d \ge 3$, we have for all $\omega > 0$,
\begin{equation}\label{eq2.14v2}
m_\omega = \E^0(W),
\end{equation}
where
\begin{equation*}
\E^0(W) = \int_{\R^d} \Big(\tfrac12|\nabla W|^2
-\tfrac{d-2}{2d} |W|^\frac{2d}{d-2}\Big)\,\mathrm{d}x,
\end{equation*}
and $W\in \dot{H}^1(\R^d)$ is unique positive radial solution of
\begin{equation} \label{eq2.16v2}
- \Delta W = |W|^\frac4{d-2} W.
\end{equation}
\end{prop}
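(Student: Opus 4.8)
The plan is to prove the two matching bounds $m_\om\ge\E^0(W)$ and $m_\om\le\E^0(W)$ for each fixed $\om>0$. For the lower bound I would first record the algebraic identity valid in the energy-critical case $p=1+\tfrac4{d-2}$, where $p+1=\tfrac{2d}{d-2}$ and $\tfrac{d(p-1)}{2(p+1)}=1$: comparing \eqref{eq1.4} and \eqref{eq1.6} term by term gives
\[
\cS_\om(\fy)-\tfrac12\K(\fy)=\tfrac1d\int_{\R^d}\abs{\fy}^{\frac{2d}{d-2}}\,\mathrm{d}x+\tfrac12\om\,\M(\fy),\qquad\fy\in H^1(\R^d).
\]
Hence on the admissible set in \eqref{eq1.7} one has $\cS_\om(\fy)\ge\tfrac1d\norm{\fy}_{L^{2d/(d-2)}}^{2d/(d-2)}$, while $\K(\fy)=0$ also forces $\norm{\na\fy}_{L^2}^2\le\norm{\fy}_{L^{2d/(d-2)}}^{2d/(d-2)}$. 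Combining this with the sharp Sobolev inequality $\norm{\fy}_{L^{2d/(d-2)}}^2\le S_d\norm{\na\fy}_{L^2}^2$ (extremised by $W$) together with the Pohozaev relations $\norm{\na W}_{L^2}^2=\norm{W}_{L^{2d/(d-2)}}^{2d/(d-2)}$ and $\E^0(W)=\tfrac1d\norm{\na W}_{L^2}^2$ that follow from \eqref{eq2.16v2}, one deduces $\norm{\na\fy}_{L^2}^2\ge\norm{\na W}_{L^2}^2$, and therefore $\cS_\om(\fy)\ge\tfrac1d\norm{\na\fy}_{L^2}^2\ge\tfrac1d\norm{\na W}_{L^2}^2=\E^0(W)$. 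Taking the infimum gives $m_\om\ge\E^0(W)$.

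For the upper bound I would exhibit a sequence on the constraint whose action tends to $\E^0(W)$, built by concentrating a truncation of $W$. Fix $\chi\in C_c^\infty(\R^d)$ with $\chi\equiv1$ on the unit ball and set $W_R=\chi(\cdot/R)W$; since $W(x)\sim\abs{x}^{2-d}$ at infinity, $W_R\in H^1(\R^d)$ for every $R$, and $\norm{\na W_R}_{L^2}^2\to\norm{\na W}_{L^2}^2$, $\norm{W_R}_{L^q}^q\to\norm{W}_{L^q}^q$ as $R\to\infty$ for $q\in\{\tfrac{2d}{d-2},\tfrac{2(d+2)}d\}$ (these norms of $W$ being finite precisely because $d\ge3$). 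Next apply the dilation $W_R^\la(x)=\la^{(d-2)/2}W_R(\la x)$ preserving $\dot H^1$ and $L^{2d/(d-2)}$, under which $\norm{W_R^\la}_{L^{2(d+2)/d}}^{2(d+2)/d}=\la^{-4/d}\norm{W_R}_{L^{2(d+2)/d}}^{2(d+2)/d}\to0$ and $\M(W_R^\la)=\la^{-2}\M(W_R)\to0$ as $\la\to\infty$. Finally, for any $0\ne\psi\in H^1$ the map $\mu\mapsto\K(\mu\psi)=\mu^2\norm{\na\psi}_{L^2}^2-\mu^{\frac{2d}{d-2}}\norm{\psi}_{L^{2d/(d-2)}}^{2d/(d-2)}+\tfrac d{d+2}\mu^{\frac{2(d+2)}d}\norm{\psi}_{L^{2(d+2)/d}}^{2(d+2)/d}$ is positive for small $\mu>0$ and negative for large $\mu$ (as $2<\tfrac{2(d+2)}d<\tfrac{2d}{d-2}$), so there is a unique $\mu_{R,\la}>0$ with $\K(\mu_{R,\la}W_R^\la)=0$, and the explicit balance shows $\mu_{R,\la}\to\big(\norm{\na W_R}_{L^2}^2/\norm{W_R}_{L^{2d/(d-2)}}^{2d/(d-2)}\big)^{(d-2)/4}$ as $\la\to\infty$, which tends to $1$ as $R\to\infty$. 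Then $m_\om\le\cS_\om(\mu_{R,\la}W_R^\la)$, and letting $\la\to\infty$ and then $R\to\infty$ the mass and the mass-critical term drop out and $\mu_{R,\la}\to1$, so $\cS_\om(\mu_{R,\la}W_R^\la)\to\tfrac12\norm{\na W}_{L^2}^2-\tfrac{d-2}{2d}\norm{W}_{L^{2d/(d-2)}}^{2d/(d-2)}=\tfrac1d\norm{\na W}_{L^2}^2=\E^0(W)$, giving $m_\om\le\E^0(W)$.

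The only genuinely delicate part is the upper bound: one must simultaneously truncate $W$ into $H^1(\R^d)$ (unavoidable for $d=3,4$, where $W\notin L^2$) without losing the $\dot H^1$ and $L^{2d/(d-2)}$ masses, dilate so as to drive the defocusing mass-critical term and the mass to $0$, and correct the amplitude to land exactly on $\{\K=0\}$; tracking, through the iterated limit in $\la$ and $R$, that this amplitude correction tends to $1$ is the step that is least automatic, and it is also where one sees that the infimum is in fact never attained. The lower bound, resting on the displayed identity and elementary consequences of the sharp Sobolev inequality and the Pohozaev identities for $W$, is routine.
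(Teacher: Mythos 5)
Your argument is correct, and it reaches the value $\E^0(W)$ by a route that differs in its packaging from the paper's. For the lower bound you work directly on the constraint set $\{\K=0\}$ of \eqref{eq1.7}: the identity $\cS_\om-\tfrac12\K=\cH_\om\ge\tfrac1d\|\cdot\|_{L^{2d/(d-2)}}^{2d/(d-2)}$, the inequality $\|\na\fy\|_{L^2}^2\le\|\fy\|_{L^{2d/(d-2)}}^{2d/(d-2)}$ forced by $\K(\fy)=0$, the sharp Sobolev inequality, and the identities $\|\na W\|_{L^2}^2=\|W\|_{L^{2d/(d-2)}}^{2d/(d-2)}$, $\E^0(W)=\tfrac1d\|\na W\|_{L^2}^2$ give $\cS_\om(\fy)\ge\tfrac1d\|\na W\|_{L^2}^2$; this is essentially the same Sobolev chain the paper runs, but you avoid first passing through the relaxed problems of Proposition \ref{le2.4} and Lemma \ref{le2.6v2} (first $\inf\cH_\om$ over $\{\K<0\}$, then $\inf\cH^0$ over $\{\K^0<0\}$). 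For the upper bound the paper uses Lemma \ref{le2.6v2}, whose proof contains the same $\dot H^1$-invariant rescaling $\wt T_\la$ that you use to kill the mass and the mass-critical term, and then approximates $W$ by $H^1$ functions via density; you instead construct explicit admissible competitors by truncating $W$, rescaling, and correcting the amplitude so as to land exactly on $\{\K=0\}$, and you track that the amplitude correction tends to $1$ through the iterated limit. The essential mechanisms (the energy-critical scaling limit and the sharp constant attained by $W$) are shared, but your version is more self-contained, needing only the definition \eqref{eq1.7}, and it treats the constraint in the upper bound explicitly, whereas the paper's shorter density step leaves implicit the small adjustment needed to make the approximants of $W$ admissible for Lemma \ref{le2.6v2} (i.e.\ $\K^0\le0$); what the paper's route buys in exchange is the isolation of the $\om$-independent limiting pair $(\K^0,\cH^0)$, which cleanly explains why $m_\om$ does not depend on $\om$ in the critical case.
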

\begin{rem}
\begin{enumerate}
 \item When $p = 1 + \frac4{d-2}$, we note $m_\omega = \E^0(W)$, which indicates that $m_\omega$ is independent of $\omega$, so we can define $m :=  m_\omega =\E^0(W)$ in this case. Let
\begin{align*}
& A_{\omega, +} = \{ \varphi\in H^1(\R^d): \cS_\omega(\varphi) < m_\omega,\, \K(\varphi) \ge 0\},\\
& A_{\omega, -} = \{ \varphi\in H^1(\R^d): \cS_\omega(\varphi) < m_\omega,\, \K(\varphi) < 0\},\\
& A_+ = \{ \varphi\in H^1(\R^d): \E(\varphi) < m,\, \K(\varphi) \ge 0\},\\
& A_- = \{ \varphi\in H^1(\R^d): \E(\varphi) < m,\, \K(\varphi) < 0\}.
\end{align*}
\item $A_{\omega, \pm}$ and $A_{\pm}$ are non-empty. In fact, we note $\varphi = 0$ belongs to both $A_{\omega, +}$ and $A_{+}$, so $A_{\omega, +}$ and $A_{+}$ are nonempty. On the other hand, we can easily verify that $\varphi(x) = \epsilon^{-\frac{d}2} Q(\epsilon^{-1}x)$ belongs to $A_{\omega,-}$, when $\epsilon$ is sufficiently smalll. Similarly, by using some truncation of $\varphi = \epsilon^{-\frac{d}2} W(\epsilon^{-1} x)$ (still denoted by $\varphi$) to make sure $\varphi\in H^1(\R^d)$, we can show $A_-$ is nonempty.
\item We will give the proof of the following theorem in a unified form regardless of $1+\tfrac4d < p < 1 + \tfrac4{d-2},\, d\in\{3,4\}$ or $1 + \tfrac4d < p
< \infty, \, d\in\{ 1, 2\}$ or $p = 1 + \frac4{d-2}, \, d =  3, 4$. In fact, We note for any $ u_0 \in A_+$, we have $\E(u_0) < m$, we can take $\omega > 0$ small enough such that $\E(u_0) + \frac12 \omega \M(u_0) < m$, thus $u_0 \in A_{\omega, +}$. Similarly, for any $u_0\in A_-$, we can still take $\omega > 0$ small enough such that $u_0\in A_{\omega, -}$. Thus, our result
    below in the situation $p = 1 + \frac4{d-2}, \, d =  3, 4$ can be proved as in the case $1+\tfrac4d < p < 1 + \tfrac4{d-2},\, d\in\{3,4\}$. 
\end{enumerate}
\end{rem}
Now, we state our result.
\begin{thm}\label{th1.4}
\begin{enumerate}
  \item For $1+\tfrac4d < p < 1 + \tfrac4{d-2},\, d\in\{3,4\}$, $1 + \tfrac4d < p
< \infty, \, d\in\{ 1, 2\}$, and $u_0$ is radial, we have for any $\omega > 0$, \\
 $(i)$ if $u_0
\in A_{\om,+}$, the solution $u$ to
$\eqref{eq1.1}$ exists globally and scatters in $H^1(\R^d)$;\\
 $(ii)$
if $u_0 \in A_{\om,-}$, for $d\ge 2, \ p \le
\min(5, 1+ \frac4{d-2})$, the solution $u$ blows up in finite time.
\item For $p = 1 + \frac4{d-2},\, d\in\{3,4\}$, and $u_0$ is radial, we have\\
$(i)$ if $u_0\in A_{ +}$, the solution $u$ to
\eqref{eq1.1} exists globally and scatters in $H^1(\R^d)$;\\
$(ii)$ if $u_0 \in A_{-}$, the solution $u$
blows up in finite time.
\end{enumerate}

\end{thm}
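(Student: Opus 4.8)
\medskip
\noindent\textbf{Proof proposal.}
The plan is to run the concentration--compactness/rigidity method of Kenig--Merle, with the variational structure of Propositions~\ref{bcpr1.1}--\ref{pr1.1} playing the role of the potential-well (``energy trapping'') analysis and the $H^1$ profile decomposition supplying the compactness, while the blow-up half rests on a localized virial identity. I will treat the case $1+\tfrac4d<p<1+\tfrac4{d-2}$ in detail and then, following item~(3) of the Remark, recover the energy-critical case $p=1+\tfrac4{d-2}$ by fixing $\om>0$ small so that $A_+\subset A_{\om,+}$ and $A_-\subset A_{\om,-}$; the only change is that the profile decomposition then carries scaling parameters.

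\emph{Step 1 (energy trapping, local theory, global existence).} Using conservation of $\cS_\om$ and $\M$ together with the characterization $m_\om=\cS_\om(Q)$ (resp.\ $\E^0(W)$), a continuity argument shows that $A_{\om,+}$ and $A_{\om,-}$ are invariant under the flow of \eqref{eq1.1}: on $\{\cS_\om<m_\om\}$ the functional $\K$ cannot vanish at a nonzero state, so its sign is preserved. On $A_{\om,+}$ this gives the coercivity $\K(\varphi)\gtrsim\min(\norm{\nabla\varphi}_2^2,\,m_\om-\cS_\om(\varphi))$, hence a uniform $H^1$ bound; on $A_{\om,-}$ it gives $\K(\varphi)\le-\de$ with $\de=\de(m_\om-\cS_\om(\varphi))>0$, together with $8\K(\varphi)\le C\cS_\om(\varphi)-c\norm{\nabla\varphi}_2^2$. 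Combined with the Cazenave--Weissler local theory and Strichartz estimates, the $H^1$ bound on $A_{\om,+}$ yields global existence there, and the small-data theory yields scattering when $\cS_\om(u_0)$ is small.

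\emph{Step 2 (critical element --- the hard part).} Suppose scattering fails for some radial datum in $A_{\om,+}$, and let $\ell_c<m_\om$ be the supremum of $\ell$ such that every radial solution with $\cS_\om<\ell$ and $\K\ge0$ scatters. Choosing radial $u_{0,n}$ with $\cS_\om(u_{0,n})\to\ell_c$ whose solutions do not scatter and applying the linear profile decomposition in $H^1(\R^d)$ --- which for radial sequences involves only time translations and, when $p=1+\tfrac4{d-2}$, dyadic scalings $T_{\la_n^j}$ --- the usual dichotomy applies: either all profiles lie strictly below $\ell_c$, so that a stability/perturbation lemma forces $u_n$ to scatter for large $n$ (contradiction), or exactly one nontrivial nonlinear profile remains, at level $\ell_c$. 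Here profiles with a diverging time parameter scatter by comparison with $e^{it\Delta}$, and in the energy-critical regime a profile whose scale satisfies $\la_n^j\to0$ (resp.\ $\la_n^j\to\infty$) solves in the limit the focusing energy-critical NLS --- which scatters below the threshold $\E^0(W)$ by Kenig--Merle in the radial case $d=3,4$ --- resp.\ the defocusing mass-critical NLS --- which scatters by Dodson --- the sub-threshold conditions coming from the Pythagorean expansions of $\M$, $\E$ and $\K$ along the decomposition. This produces a radial critical element $u_c\in A_{\om,+}$: a global, non-scattering solution with $\cS_\om(u_c)=\ell_c$ whose orbit $\{u_c(t):t\in\R\}$ is precompact in $H^1(\R^d)$ (radiality removes the translation parameter, and in the energy-critical case the scale of the surviving profile is shown to stay comparable to $1$). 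I expect this step --- building an $H^1$ profile decomposition that simultaneously resolves the mass-critical and (when $p=1+\tfrac4{d-2}$) the energy-critical scales, and a perturbation lemma robust enough to identify the limiting equations at degenerate scales --- to be the main obstacle; it is also where the radial hypothesis and the restriction $d\le4$ enter, through the Kenig--Merle radial energy-critical theorem.

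\emph{Step 3 (rigidity and blow-up).} For (1)(i), take $\chi_R$ radial with $\chi_R(x)=\abs x^2$ for $\abs x\le R$ and set $z_R(t)=\int_{\R^d}\chi_R\abs{u_c}^2\,\mathrm{d}x$; precompactness bounds $\norm{u_c(t)}_{H^1}$, so $\ddot z_R(t)=8\K(u_c(t))+o_R(1)$ uniformly in $t$. By Step~1 either $u_c\equiv0$, excluded since $u_c$ does not scatter, or $\inf_t\K(u_c(t))=:\de>0$, whence $\ddot z_R(t)\ge4\de$ for $R$ large and all $t$, contradicting $\abs{\dot z_R(t)}\lesssim R\norm{u_c}_{L^2}\norm{\nabla u_c}_{L^2}$; hence scattering holds throughout $A_{\om,+}$. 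For (1)(ii), run the localized virial for the (a priori local) solution with datum in $A_{\om,-}$, where $\norm{\nabla u(t)}_2$ need not stay bounded: the inequality $\ddot z_R(t)\le8\K(u(t))+\eta\norm{\nabla u(t)}_2^2+C(\eta,R)$, with the nonlinear error estimated by the radial Strauss inequality (this is where $d\ge2$ and $p\le\min(5,1+\tfrac4{d-2})$ enter), combined with $\K(u(t))\le-\de$ and $8\K(u(t))\le C\cS_\om(u_0)-c\norm{\nabla u(t)}_2^2$, gives by a two-regime argument (small versus large $\norm{\nabla u(t)}_2$, in the spirit of Ogawa--Tsutsumi) that $\ddot z_R(t)\le-c_0<0$ for a suitable fixed $R$ and all $t$ in the lifespan; were the solution global, $z_R$ would become negative, which is absurd, so it blows up in finite time. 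Finally, part~(2) follows from the reduction above together with the energy-critical versions of Steps~1--3.
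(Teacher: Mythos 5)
Your skeleton coincides with the paper's: variational trapping (its Propositions \ref{le2.7} and \ref{le2.9}), a Kenig--Merle critical-element scheme driven by an $H^1$ profile decomposition plus a long-time perturbation lemma (Proposition \ref{pr4.3}), virial rigidity for the radial critical element, and an Ogawa--Tsutsumi-type localized virial with radial Sobolev inequalities for the blow-up half; your Steps 1 and 3 are essentially Sections 2, 6 and 7 of the paper. The decisive divergence is in your Step 2. The paper does \emph{not} carry scaling (or Galilean) parameters and then appeal to Kenig--Merle and Dodson for limiting equations: its central technical result (Theorem \ref{le5.2}) is that, applying the $L^2$ profile decomposition of Merle--Vega/Carles--Keraani/B\'egout--Vargas to $\jb{\nabla}u_n(0)$, every profile with $h_n^j\to 0$ or $\infty$, or with unbounded $h_n^j\xi_n^j$, is shown to vanish, so the $H^1$ decomposition contains only space--time translations and the remainder is small in $\jb{\nabla}$-Strichartz norms, estimate \eqref{eq2.15v3new}. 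Consequently every nonlinear profile solves \eqref{eq1.1} itself, no black-box scattering theorems enter, and the case $p=1+\tfrac4{d-2}$ is folded into the subcritical one merely by taking $\om$ small so that $A_\pm\subset A_{\om,\pm}$; it is never treated with a scale-carrying decomposition. Note also that in the paper the restriction $d\le 4$ comes from the long-time perturbation theory, and radiality is used only to set $x(t)\equiv 0$ in the compactness (Corollary \ref{co5.11}) and in the radial Sobolev inequalities, not through any energy-critical theorem, so your attribution of these hypotheses to Kenig--Merle does not match the paper.

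As written, your Step 2 also contains genuine gaps rather than just a different route. First, the assertion that for radial sequences the $H^1$ decomposition involves only time translations in the range $1+\tfrac4d<p<1+\tfrac4{d-2}$ is not something radiality gives you: a radial bounded sequence can concentrate, e.g. $\varphi_n(x)=\la_n^{1-\frac d2}\chi(x/\la_n)$ with $\la_n\to 0$, which has no nontrivial translation/time-translation weak limit yet whose derivative Strichartz norms do not vanish, so a translation-only structure with a remainder good enough for the perturbation lemma is precisely what must be \emph{proved}; this is the whole content of the paper's Theorem \ref{le5.2}, Step 2, and you cannot simply assume it while simultaneously allowing scales when $p=1+\tfrac4{d-2}$. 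Second, even in the critical case, citing Kenig--Merle and Dodson is not sufficient by itself: you still need the nonlinear embedding of those limiting solutions as approximate solutions of \eqref{eq1.1} at the degenerate scale (showing the other nonlinearity is perturbative in the $\jb{\nabla}$-based norms demanded by the stability lemma), the Pythagorean verification that each such profile sits below the relevant threshold, and an argument excluding scale drift of the surviving profile so that compactness holds in $H^1$ rather than only in $\dot H^1$ modulo scaling; all of these are asserted, not argued, and you yourself flag them as the main obstacle. So at the crucial compactness stage the proposal is a programme in the style of Tao--Visan--Zhang and Miao--Xu--Zhao, whereas the paper's proof deliberately replaces that machinery by its translation-only $H^1$ profile decomposition with the two-sided remainder control \eqref{eq2.15v3new}.
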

For the nonlinear Schr\"odinger equation with combined nonlinearities, we focus on the different roles played by the two nonlinearities. Generally speaking, the main barrier of the local theory is the higher order term while the lower order term is dominant in the global behavior.
 We prove our main theorem by the compactness-contradiction method initiated by C. E. Kenig and F. Merle \cite{Kenig-Merle}. In the argument,
 the linear profile decomposition plays an important role. In previous works such as \cite{Miao-Xu-Zhao1, Miao-Xu-Zhao2}, the authors considered the equation
 \begin{equation}\label{xmz}
\CAS {i\partial_t u  + \Delta u = |u|^{p-1} u - |u|^\frac{4}{d-2} u,\\
     u(0) = u_0 \in H^1(\R^d),}
 \end{equation}
where $1+\frac{4}{d}<p<1+\frac{4}{d-2},\, d\ge 3$. Since the lower order term is $\dot{H}^s(\R^d)$($0<s=\frac{d}{2}-\frac{2}{p-1}<1$) critical, the solution of (1.11) is expected to behave like that of the defocusing equation $i\partial_t u + \Delta u = |u|^{p-1} u$. The critical space of this defocusing equation is $\dot{H}^s(0 < s < 1)$, so it is reasonable to apply the
$\dot{H}^s$-profile decomposition to $\frac{\jb{\nabla}}{|\nabla|^s} u$ for $u\in H^1(\R^d)$.
Since the symmetry group in $\dot{H}^s$ is of the same type as in $\dot{H}^1$ case,
 we may equivalently apply the $\dot{H}^1$-profile decomposition to $\frac{\jb{\nabla}}{|\nabla|} u$ for $u\in H^1(\R^d)$.
However, the lower term in (\ref{eq1.1}) is $L^2(\R^d)$-critical. The symmetry group in $L^2(\R^d)$ is different from that in $\dot{H}^1(\R^d)$ due to the Galilean symmetry. Therefore, it is natural that the $L^2$-profile decomposition is applied to $\langle\nabla\rangle u$ for $u\in H^1(\R^d)$ in this paper.

Although the Galilean transform and scaling are encoded in the profile decomposition in $L^2$, they turn out to be excluded in the profile decomposition in $H^1$. In fact, for a typical profile, if the scaling parameter goes to zero then the $\dot{H}^1$ boundedness is violated. If the scaling parameter goes to infinity, then we can show such a profile vanishes. Similar case occurs for the Galilean transform.  As a consequence of the linear profile decomposition, we can reduce to almost periodic solution independent of wellposedness and scattering results of energy-critical or mass-critical equations. This is a striking difference from \cite{Akahori-Ibrahim-Kikuchi-Nawa2, Miao-Xu-Zhao1}, where scaling to zero was excluded by the non-existence of the almost periodic solution to the focusing energy-critical Schr\"odinger equations. Hence it relies  heavily on the results in \cite{Kenig-Merle, Killip-Visan}.

We explore the profile decomposition in use to get better estimate \eqref{eq2.15v3new} for the remainder. This estimate provides spacetime control for both the remainder $w_n^k$  and its derivative $|\nabla|w_n^k$. This makes remarkable difference with the profile decomposition for $H^1(\R^d)$ data obtained both in \cite{Carles-Keraani} and \cite{Miao-Xu-Zhao1}. In fact, R. Carles and S. Keraani \cite{Carles-Keraani} only provided the control over the remainder, while the authors in \cite{ Miao-Xu-Zhao1} only provided the control over the derivative of the remainder.

Although we  make more delicate analysis due to the stronger control in the perturbation theorem, we  get stronger compactness for the critical element in $H^1(\R^d)$, which is stronger than the compactness in $\dot{H}^s(0<s\le 1)$ obtained in \cite{Miao-Xu-Zhao1, Miao-Xu-Zhao2}.

We expect our result will be extended to higher dimensions($d\ge 5$) since all the arguments make sense except the long-time
perturbation. However, the exotic Strichartz estimates in \cite{Foschi, Vilela} seem useless to establish the long-time
perturbation in our case because the mass-critical term in our equation cannot be controlled properly in the Sobolev spaces.
The radial assumption is expected to be removed in a forthcoming paper.
%

The rest of the paper is organized as follows. After introducing some notations and preliminaries, we give the threshold in Section 2. Moreover, we show the energy-trapping properties for the set $A_{\om, \pm}$ in this section. The local wellposedness and perturbation theory are stated in Section 3. In Section 4, we derive the linear profile decomposition for data in $H^1(\R^d)$. Then we argue by contradiction. We reduce to the existence of a critical element in Section 5 and show the extinction of such a critical element in Section 6. To make the results complete, we show the existence of blowup solutions in Section 7.

\vskip 0.2in

{\bf Notation and Preliminaries}
We will use the notation $X\lesssim Y$ whenever there exists some positive constant $C$ so that $X\le C Y$. Similarly, we will use $X\sim Y$ if $X\lesssim Y \lesssim X$.

    We define the Fourier transform on $\R^d$ to be $$\hat{f}(\xi) =\tfrac1{(2\pi)^d} \int_{\R^d} e^{- ix\xi} f(x)\,\mathrm{d}x,$$ and for $s \in \R$,
the fractional differential operators $|\nabla|^s$ is defined by $\widehat{|\nabla|^s f}(\xi)  = |\xi|^s \hat{f}(\xi).$
  We also define
 $\jb{\nabla}^s$ by $\widehat{\jb{\nabla}^s f}(\xi)= ( 1 + |\xi|^2)^\frac{s}2 \hat{f}(\xi)$.

 We define the homogeneous Sobolev norms
 \[ \normo{f}_{\dot{H}^s(\R^d)} = \normo{|\nabla|^s f}_{L^2(\R^d)},\]
 and inhomogeneous Sobolev norms
 \[  \normo{f}_{H^s(\R^d)} = \normo{\jb{\nabla}^s f}_{L^2(\R^d)}.\]
  We use the notation $o_n(1)$ to denote a quantity which tends to 0, as $n \to \infty$.

For $I \subset \R$, we use $L_t^q L_x^r(I \times \R^d)$ to denote the spacetime norm
$$\|u\|_{L_t^q L_x^r(I \times \R^d)} =\bigg(\int_I \Big(\int_{\R^d} |u(t,x)|^r \,\mathrm{d}x\Big)^\frac{q}{r}\,\mathrm{d}t\bigg)^\frac1 q.$$
When $q = r$, we abbreviate $L_t^q L_x^r$ as $L_{t,x}^q$.

We also recall \emph{Duhamel's formula}
\begin{align}\label{duhamel}
u(t) = e^{i(t-t_0)\Delta}u(t_0) - i \int_{t_0}^t e^{i(t-s)\Delta}(iu_t + \Delta u)(s) ds.
\end{align}
  We say that a pair of exponents $(q,r)$ is $L^2$-\emph{admissible} if $\tfrac{2}{q} +
\tfrac{d}{r} = \frac{d}{2}$ and $2 \leq q,r \leq \infty, \ (q,r,d) \ne (2, \infty, 2), \ d\ge 1$.

\begin{lem}[Strichartz estimate, \cite{Keel-Tao}]\label{le2.1}
   Let I be a compact time interval and let $u: I\times \R^d \to \C$ be a solution to the forced Schr\"odinger equaton
$i\partial_t u + \Delta u = G$ for some function $G$, then we have
 \[ \| u\|_{L_t^q L_x^r(I \times \R^d)} \lesssim \|u(t_0)\|_{L_x^2(\R^d)} + \|  G\|_{L_t^{\tilde{q}'} L_x^{\tilde{r}'}(I\times \R^d)}\]
for any $t_0 \in I$ and any $L^2$-admissible exponents $(q,r)$, $(\tilde{q},\tilde{r})$.
\end{lem}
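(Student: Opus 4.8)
The plan is to prove this by the standard $TT^*$/dispersive-estimate machinery, reducing everything to two facts about the free evolution $e^{it\Delta}$: the unitarity $\norm{e^{it\Delta}f}_{L_x^2}=\norm{f}_{L_x^2}$, and the dispersive bound $\norm{e^{it\Delta}f}_{L_x^\infty}\lesssim \abs{t}^{-d/2}\norm{f}_{L_x^1}$, the latter read off from the explicit convolution kernel $(4\pi i t)^{-d/2}e^{i\abs{x}^2/4t}$. Interpolating between these two yields $\norm{e^{it\Delta}f}_{L_x^r}\lesssim \abs{t}^{-d(1/2-1/r)}\norm{f}_{L_x^{r'}}$ for all $2\le r\le\infty$, and since admissibility means $\tfrac2q+\tfrac dr=\tfrac d2$ the decay exponent is exactly $d(1/2-1/r)=2/q$.

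First I would establish the \emph{homogeneous} estimate $\norm{e^{it\Delta}f}_{L_t^qL_x^r}\lesssim\norm{f}_{L_x^2}$ for admissible $(q,r)$. By the $TT^*$ argument this is equivalent to the boundedness $L_t^{q'}L_x^{r'}\to L_t^qL_x^r$ of $G\mapsto\int_{\R}e^{i(t-s)\Delta}G(s)\,ds$; the interpolated dispersive bound gives $\norm{\int e^{i(t-s)\Delta}G(s)\,ds}_{L_x^r}\lesssim\int\abs{t-s}^{-2/q}\norm{G(s)}_{L_x^{r'}}\,ds$, and the one-dimensional Hardy--Littlewood--Sobolev inequality closes it \emph{provided $q>2$}. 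Next I would pass to the retarded (inhomogeneous) bound $\norm{\int_{t_0}^te^{i(t-s)\Delta}G(s)\,ds}_{L_t^qL_x^r}\lesssim\norm{G}_{L_t^{\tilde q'}L_x^{\tilde r'}}$: the diagonal case $(q,r)=(\tilde q,\tilde r)$ is the same HLS computation, while the off-diagonal case follows from the full-line estimate via the Christ--Kiselev lemma whenever $q>\tilde q'$. Finally, for a solution $u$ of $i\partial_tu+\Delta u=G$ on $I$, Duhamel's formula \eqref{duhamel} writes $u(t)=e^{i(t-t_0)\Delta}u(t_0)-i\int_{t_0}^te^{i(t-s)\Delta}G(s)\,ds$, so applying the homogeneous estimate to the first term and the retarded estimate to the second gives the stated inequality.

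The main obstacle is the \emph{endpoint} admissible pair $(q,r)=(2,\tfrac{2d}{d-2})$ when $d\ge3$: there $2/q=1$, fractional integration of order zero is unbounded on $L^2$, so the naive $TT^*$/HLS step degenerates, and likewise Christ--Kiselev fails to recover the retarded off-diagonal endpoint. This is precisely the difficulty resolved in \cite{Keel-Tao}: one decomposes the time integral dyadically, $\int_{\R}=\sum_{j\in\Z}\int_{\abs{t-s}\sim2^j}$, estimates each piece of the associated bilinear form by trading the dispersive decay against the $L^2$ bound on two nearby Lorentz slices, and sums via a real-interpolation (Bourgain-type) argument giving summable off-diagonal decay in $j$ for every admissible pair, endpoint included; the remaining double-endpoint inhomogeneous case $q=\tilde q=2$, where Christ--Kiselev is unavailable, is handled by the same direct bilinear analysis applied to the truncated kernel. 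Since the statement is quoted verbatim from Keel--Tao, in practice one simply invokes that result; the above records the structure of its proof.
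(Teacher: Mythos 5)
Your proposal is correct and in the end does exactly what the paper does: the lemma is quoted from Keel--Tao and the paper offers no proof beyond that citation, while you likewise defer the endpoint and the double-endpoint retarded case to \cite{Keel-Tao}. Your sketch of the standard machinery (dispersive bound, $TT^*$ with Hardy--Littlewood--Sobolev for $q>2$, Duhamel, Christ--Kiselev for the off-diagonal retarded estimates) is an accurate account of the argument behind the cited result, so there is nothing to correct.
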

If $I \times \R^d$ is a spacetime slab, we
define the \emph{Strichartz norm} ${S}^0(I)$ by
\begin{equation}
\|u\|_{{S}^0(I) } = \sup \|u \|_{L_t^q L_x^r(I\times \R^d)},
\end{equation}
where the $\sup$ is taken over all $L^2$-admissible pairs $(q,r)$.
When $d = 2$, we need to modify the norm a little, where the $\sup$ is taken over all $L^2$-admissible pairs with $q \ge 2 + \epsilon$,
 for $\epsilon > 0$ arbitrary small.
We also define $S^1(I)$ norm by
 \begin{equation}
\|u\|_{{S}^1(I) } = \|\jb{\nabla} u \|_{S^0(I )}.
\end{equation}
\section{Variational estimates}
In this section, we prove Proposition \ref{bcpr1.1} and \ref{pr1.1}. We show the existence of the ground state
together with the energy-trapping property for $A_{\omega, \pm}$, which will be used to show the scattering and blow-up.

Let $\K(\fy) = \K^Q(\fy) + \K^N(\fy)$, where
\begin{equation*}
\K^Q(\fy) = \int_{\R^d} |\na \fy|^2\,\mathrm{d}x,
\end{equation*}
\begin{equation*}
\K^N(\fy) = - \tfrac{d(p-1)}{2(p+1)}\int  |\fy|^{p+1}\,\mathrm{d}x +
\tfrac{d}{d+2}\int |\fy|^\frac{2(d+2)}d \,\mathrm{d}x.
\end{equation*}
We have the following basic fact about $\K(T_\lambda \varphi)$:
\begin{lem}[Dynamic behavior of $\K$ under the scaling]\label{le2.1}
For any $\varphi\in H^1(\R^d)\setminus \{0\}$, there exists a unique $\lambda_0(\varphi) > 0$ that
\begin{equation}\label{eq2.1}
\K(T_\lambda \varphi)
\begin{cases}
> 0,  &  0 < \lambda < \lambda_0(\varphi),\\
= 0,  & \lambda = \lambda_0(\varphi),\\
< 0,   & \lambda > \lambda_0(\varphi).
\end{cases}
\end{equation}
\end{lem}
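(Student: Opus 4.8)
The plan is to compute $\K(T_\la \fy)$ explicitly as a function of $\la$, using the scaling $(T_\la \fy)(x) = \la^{d/2}\fy(\la x)$, and then analyze the resulting function. A change of variables gives
\EQ{ \K^Q(T_\la \fy) \pt= \la^2 \K^Q(\fy) = \la^2 \norm{\na\fy}_{L^2}^2, \pr \int |T_\la\fy|^{p+1} \pt= \la^{\frac{d(p-1)}{2}} \int |\fy|^{p+1}, \pr \int |T_\la\fy|^{\frac{2(d+2)}{d}} \pt= \la^2 \int |\fy|^{\frac{2(d+2)}{d}}, }
so that, writing $a = \norm{\na\fy}_{L^2}^2 > 0$, $b = \tfrac{d(p-1)}{2(p+1)}\int|\fy|^{p+1} \ge 0$, $c = \tfrac{d}{d+2}\int|\fy|^{\frac{2(d+2)}{d}} \ge 0$ (with $b, c$ not both zero since $\fy \ne 0$), one finds
\EQ{ \label{eq2.2} \K(T_\la\fy) = (a+c)\la^2 - b\,\la^{\frac{d(p-1)}{2}}. }
Note $\tfrac{d(p-1)}{2} > 2$ precisely because $p > 1 + \tfrac4d$, which is the standing hypothesis; this is the crucial inequality that makes the higher-order term win as $\la \to \I$.

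Set $f(\la) = \K(T_\la\fy) = (a+c)\la^2 - b\,\la^{\gamma}$ with $\gamma := \tfrac{d(p-1)}{2} > 2$. If $b = 0$ then $f(\la) = (a+c)\la^2 > 0$ for all $\la > 0$; but then $\K(\fy) = f(1) = a + c > 0$, contradicting nothing — however in this degenerate case there is no $\la_0$ with $f(\la_0) = 0$. I should check this cannot happen: if $b = 0$ then $\int|\fy|^{p+1} = 0$, so $\fy = 0$ a.e., contradicting $\fy \ne 0$. Hence $b > 0$. Then $f(\la) = \la^2\big[(a+c) - b\,\la^{\gamma-2}\big]$, and since $\gamma - 2 > 0$ the bracket $g(\la) := (a+c) - b\,\la^{\gamma-2}$ is strictly decreasing from $a+c > 0$ (as $\la \to 0^+$) to $-\I$ (as $\la \to \I$), so it has a unique zero $\la_0 := \big(\tfrac{a+c}{b}\big)^{\frac{1}{\gamma-2}} > 0$. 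Since $\la^2 > 0$ always, $f$ has the same sign as $g$: positive on $(0,\la_0)$, zero at $\la_0$, negative on $(\la_0,\I)$, which is exactly \eqref{eq2.1} with $\la_0(\fy) = \la_0$.

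There is essentially no obstacle here — the lemma is a direct consequence of the explicit scaling computation plus the elementary observation that $p > 1 + \tfrac4d$ forces the exponent $\gamma = \tfrac{d(p-1)}{2}$ of the focusing term to exceed the exponent $2$ of the (combined gradient-plus-mass-critical) term. The only point requiring a word of care is that $\K^Q$ and the defocusing mass-critical piece scale \emph{the same way} ($\la^2$), so they may be grouped together; this is why the defocusing mass-critical nonlinearity, despite being "critical," behaves here like part of the good quadratic term rather than competing with the focusing nonlinearity. In the energy-critical case $p = 1 + \tfrac4{d-2}$ one has $\gamma = \tfrac{d}{d-2} \cdot 2 \cdot \tfrac{d-2}{2}$... more simply $\tfrac{d(p-1)}{2} = \tfrac{d}{2}\cdot\tfrac{4}{d-2} = \tfrac{2d}{d-2} > 2$ still, so the same argument applies verbatim, consistent with the remark that the proof is given in unified form.
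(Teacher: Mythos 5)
Your proof is correct and is essentially the paper's own argument: the paper likewise writes $\tfrac1{\lambda^2}\K(T_\lambda\varphi)$ explicitly (your function $g$), notes it is strictly decreasing in $\lambda$ because $\tfrac{d}{2}(p-1)>2$, positive for small $\lambda$, and tends to $-\infty$, giving a unique zero. Your explicit check that $b>0$ (so the minimizing exponent term is genuinely present) is a point the paper uses only implicitly, but the route is the same.
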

\begin{proof}
An easy computation gives
\begin{equation*}
\tfrac1{\lambda^2} \K(T_\lambda \varphi) = \int |\nabla \varphi|^2 +
\tfrac{d}{d+2} |\varphi|^\frac{2(d+2)}d - \tfrac{d(p-1)}{2(p+1)}
\lambda^{\frac{d}2(p-1)-2} |\varphi|^{p+1} \,\mathrm{d}x,
\end{equation*}
which implies $\K(T_\lambda \varphi)>0$ for $\lambda>0$ sufficiently small. By
\begin{align*}
 \tfrac{\mathrm{d}}{\mathrm{d}\lambda}\left(\tfrac1{\lambda^2} \K(T_\lambda \varphi)\right)
= - \tfrac{d(p-1)}{2(p+1)} \left(\tfrac{d}2(p-1) -2\right)
\lambda^{\frac{d}2(p-1)-3} \int |\varphi|^{p+1} \,\mathrm{d}x < 0,
\end{align*}
we see $\tfrac1{\lambda^2} \K(T_\lambda \varphi)$ is monotone
decreasing with respect to $\lambda>0$.

Since
\begin{align*}
\K(T_\lambda \varphi) & = \lambda^2 \left( \|\nabla \varphi\|_{L^2}^2 + \tfrac{d}{d+2}\|\varphi\|_{L^{\frac{2(d+2)}d}}^\frac{2(d+2)}d\right)
- \lambda^{\frac{d}2(p-1)} \tfrac{d(p-1)}{2(p+1)} \|\varphi\|_{L^{p+1}}^{p+1} \\
& \to -\infty, \text{ as } \lambda \to \infty, 
\end{align*}
there exists a unique $\lambda_0 > 0$ such that $\K(T_{\lambda_0} \varphi) = 0$ and \eqref{eq2.1} follows.
\end{proof}
Now, we show the positivity of $\K$ near 0 in the energy space.
\begin{lem} \label{le2.2}
  For any bounded sequence $\fy_n \in H^1(\R^d)\setminus \{0\}$ with $\K^Q(\fy_n) \to 0$, as $n \to \infty$,
then for $n$ large enough, we have $\K(\fy_n) > 0$.
\end{lem}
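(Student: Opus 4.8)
The plan is to isolate, within the three terms constituting $\K(\fy_n)$, the only one that can be negative. Writing
\[
\K(\fy_n) = \K^Q(\fy_n) + \tfrac{d}{d+2}\norm{\fy_n}_{L^\frac{2(d+2)}d}^\frac{2(d+2)}d - \tfrac{d(p-1)}{2(p+1)}\norm{\fy_n}_{L^{p+1}}^{p+1},
\]
the first two summands are nonnegative, so it is enough to show that the focusing term $\norm{\fy_n}_{L^{p+1}}^{p+1}$ is negligible compared with $\K^Q(\fy_n) = \norm{\na\fy_n}_{L^2}^2$ as $n\to\I$.

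To this end I would apply the Gagliardo--Nirenberg inequality in the form
\[
\norm{\fy}_{L^{p+1}}^{p+1} \les \norm{\na\fy}_{L^2}^\frac{d(p-1)}2 \, \norm{\fy}_{L^2}^{\,p+1-\frac{d(p-1)}2},
\]
which is valid precisely for $1+\tfrac4d<p\le 1+\tfrac4{d-2}$ when $d\ge 3$ and for all finite $p$ when $d\in\{1,2\}$; in the energy-critical case $p=1+\tfrac4{d-2}$, $d\ge 3$, this degenerates to the Sobolev embedding $\dot H^1(\R^d)\hookrightarrow L^{\frac{2d}{d-2}}(\R^d)$ and the second factor drops out. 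The arithmetic fact that drives the whole lemma is that $p>1+\tfrac4d$ forces $\tfrac{d(p-1)}2>2$. Since $(\fy_n)$ is bounded in $L^2(\R^d)$, this yields a constant $C$, independent of $n$, such that $\norm{\fy_n}_{L^{p+1}}^{p+1} \le C\,\K^Q(\fy_n)^{\frac{d(p-1)}4}$.

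Plugging this in and discarding the nonnegative $L^{2(d+2)/d}$ term, I obtain
\[
\K(\fy_n) \ge \K^Q(\fy_n)\Big(1 - C'\,\K^Q(\fy_n)^{\frac{d(p-1)}4-1}\Big),
\]
with $C'$ independent of $n$. Because $\fy_n\ne 0$ and $\fy_n\in H^1(\R^d)$, a function with vanishing gradient that lies in $L^2(\R^d)$ is identically zero, so $\K^Q(\fy_n)=\norm{\na\fy_n}_{L^2}^2>0$ for every $n$. Since $\tfrac{d(p-1)}4-1>0$ and $\K^Q(\fy_n)\to 0$ by hypothesis, for all sufficiently large $n$ the parenthetical factor is $\ge\tfrac12$, hence $\K(\fy_n)\ge\tfrac12\K^Q(\fy_n)>0$, which is the claim.

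I do not anticipate any real obstacle here: the estimate is an immediate consequence of Gagliardo--Nirenberg together with the subcriticality $\tfrac{d(p-1)}2>2$. The only two points that deserve a line of care are the strict (as opposed to merely non-strict) positivity $\K^Q(\fy_n)>0$, which is what makes the final lower bound genuinely positive, and, in the energy-critical case, remembering to replace Gagliardo--Nirenberg by the sharp Sobolev inequality.
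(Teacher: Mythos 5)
Your proof is correct and follows essentially the same route as the paper's: both control $\|\fy_n\|_{L^{p+1}}^{p+1}$ by interpolation/Gagliardo--Nirenberg (Sobolev in the critical case) as $\lesssim \|\nabla\fy_n\|_{L^2}^{\frac{d(p-1)}2}$ times a bounded $L^2$ factor, use $\frac{d(p-1)}2>2$ to make this term $o(\K^Q(\fy_n))$, and conclude $\K(\fy_n)\gtrsim \K^Q(\fy_n)>0$ for large $n$. Your explicit remark that $\K^Q(\fy_n)>0$ since a nonzero $H^1$ function cannot have vanishing gradient is a point the paper leaves implicit, but the argument is the same.
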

\begin{proof}
By assumption,
$\|\nabla \varphi_n\|_{L^2} \to 0$, as $n\to \infty$.
Due to the interpolation and Sobolev inequalities,
we have
\begin{align*}
\|\varphi_n\|_{L^{p+1}}^{p+1} & \lesssim  \|\varphi_n\|_{L^2}^{p+1 - \frac{d}2(p-1)} \|\nabla \varphi_n\|_{L^2}^{\frac{d}2(p-1)}.\\
\end{align*}
Since $\frac{d}2(p-1) > 2$, for $n$ large enough, we see
\begin{align*}
\K(\varphi_n) & = \int_{\R^d} |\nabla \varphi_n|^2 - \tfrac{d(p-1)}{2(p+1)} |\varphi_n|^{p+1} + \tfrac{d}{d+2} |\varphi_n|^\frac{2(d+2)}d \,\mathrm{d}x\\
              & \ge \int_{\R^d} |\nabla \varphi_n|^2 \,\mathrm{d}x - o\big(\int_{\R^d} |\nabla \varphi_n|^2\,\mathrm{d}x\big)\\
             &  \sim \int_{\R^d} |\nabla \varphi_n|^2 \,\mathrm{d}x > 0.
\end{align*}
\end{proof}
\begin{lem}\label{le2.3}
For any $\fy \in H^1(\R^d)$, we have
\begin{equation}\label{eq2.2}
(2-\LL) \cS_\om(\fy) = \om \|\fy\|_{L^2}^2 +
\tfrac{d(p-1)-4}{2(p+1)} \|\fy\|_{L^{p+1}}^{p+1},
\end{equation}
\begin{equation}\label{eq2.3}
\LL(2 - \LL) \cS_\om(\fy)  = \tfrac{d(p-1)(d(p-1)-4)}{4(p+1)}
\|\fy\|_{L^{p+1}}^{p+1}.
\end{equation}
\end{lem}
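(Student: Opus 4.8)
The plan is to evaluate $\cS_\om$ along the dilation orbit $\lambda\mapsto T_\lambda\fy$ and then reduce everything to one–variable calculus. First I would record how the individual terms scale: since $(T_\lambda\fy)(x)=\lambda^{d/2}\fy(\lambda x)$, a change of variables gives $\M(T_\lambda\fy)=\M(\fy)$, $\|\na T_\lambda\fy\|_{L^2}^2=\lambda^2\|\na\fy\|_{L^2}^2$, $\|T_\lambda\fy\|_{L^{\frac{2(d+2)}{d}}}^{\frac{2(d+2)}{d}}=\lambda^2\|\fy\|_{L^{\frac{2(d+2)}{d}}}^{\frac{2(d+2)}{d}}$ and $\|T_\lambda\fy\|_{L^{p+1}}^{p+1}=\lambda^{\frac{d(p-1)}{2}}\|\fy\|_{L^{p+1}}^{p+1}$. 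Consequently the function $g(\lambda):=\cS_\om(T_\lambda\fy)$ has the form $g(\lambda)=A\lambda^2+B\lambda^{\frac{d(p-1)}{2}}+C$ with $A=\tfrac12\|\na\fy\|_{L^2}^2+\tfrac{d}{2(d+2)}\|\fy\|_{L^{\frac{2(d+2)}{d}}}^{\frac{2(d+2)}{d}}$, $B=-\tfrac1{p+1}\|\fy\|_{L^{p+1}}^{p+1}$ and $C=\tfrac12\om\|\fy\|_{L^2}^2$; note that $g'(1)$ already reproduces the formula \eqref{eq1.6} for $\K(\fy)=\LL\cS_\om(\fy)$.

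Next, I would make precise how the iterated operator acts. Since the dilations form a one–parameter group, $T_\mu T_\lambda=T_{\mu\lambda}$, the operator $\LL$ acts on functionals of the form $F(T_\lambda\fy)$ as the Euler operator $\lambda\,\partial_\lambda$ frozen at $\lambda=1$: one application yields $\tfrac{\mathrm{d}}{\mathrm{d}\lambda}\big|_{\lambda=1}F(T_\lambda\fy)$, and applying $\LL$ once more, using $F(T_\lambda T_\mu\fy)=F(T_{\lambda\mu}\fy)$, produces $h'(1)+h''(1)$ with $h(\lambda)=F(T_\lambda\fy)$; equivalently, $\LL$ multiplies each homogeneous piece of a functional by its scaling weight. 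Hence $(2-\LL)\cS_\om(\fy)=2g(1)-g'(1)$ and $\LL(2-\LL)\cS_\om(\fy)=2g'(1)-\big(g'(1)+g''(1)\big)=g'(1)-g''(1)$.

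Then it remains to substitute the explicit $g$. From $g(1)=A+B+C$, $g'(1)=2A+\tfrac{d(p-1)}{2}B$ and $g''(1)=2A+\tfrac{d(p-1)}{2}\big(\tfrac{d(p-1)}{2}-1\big)B$, the $A$–contributions cancel identically, leaving $2g(1)-g'(1)=\big(2-\tfrac{d(p-1)}{2}\big)B+2C$ and $g'(1)-g''(1)=\tfrac{d(p-1)}{2}\big(2-\tfrac{d(p-1)}{2}\big)B$; inserting the values of $B$ and $C$ and tidying the numerical factors gives exactly \eqref{eq2.2} and \eqref{eq2.3}. Alternatively, and more transparently, once \eqref{eq2.2} is established one obtains \eqref{eq2.3} for free by applying $\LL$ to both sides: $\LL$ annihilates the scaling–invariant term $\om\|\fy\|_{L^2}^2$ and multiplies $\|\fy\|_{L^{p+1}}^{p+1}$ by its weight $\tfrac{d(p-1)}{2}$, which reproduces the coefficient on the right of \eqref{eq2.3}.

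There is no genuine obstacle here; the only point I would be careful about is the meaning of the twice–iterated operator in $\LL(2-\LL)\cS_\om$: one must invoke the semigroup property $T_\mu T_\lambda=T_{\mu\lambda}$ so that the second application of $\LL$ is again an honest scaling derivative at $\lambda=1$ (equivalently, that $\LL$ counts scaling weights on homogeneous functionals), rather than a naive second $\lambda$–derivative. With that understood, the proof is just bookkeeping with the exponents $2$ and $\tfrac{d(p-1)}{2}$.
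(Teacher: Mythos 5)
Your proposal is correct and is essentially the paper's argument: the paper likewise treats $\LL$ as multiplication by the scaling weight on each homogeneous piece of $\cS_\om$ (weights $2$, $\tfrac{d}{2}(p-1)$, $2$, and $0$ for the mass term), writes $(2-\LL)\cS_\om = 2\cS_\om - \K$, and then obtains \eqref{eq2.3} exactly as in your ``alternative'' remark by applying $\LL$ to the right-hand side of \eqref{eq2.2}. Your careful point that the iterated operator is $h'(1)+h''(1)$ rather than a naive second $\lambda$-derivative is a valid clarification of what the paper uses implicitly, but it does not change the substance of the computation.
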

\begin{proof}Direct computation shows that
\begin{align*}
\LL \|\nabla \varphi\|_{L^2}^2 = 2 \|\nabla \varphi\|_{L^2}^2,\\
\LL \|\varphi \|_{L^{p+1}}^{p+1} = \tfrac{d}2(p-1) \|\varphi\|_{L^{p+1}}^{p+1},\\
\LL \|\varphi\|_{L^\frac{2(d+2)}d}^\frac{2(d+2)}d = 2\|\varphi\|_{L^\frac{2(d+2)}d}^\frac{2(d+2)}d,
\end{align*}
hence we have
\begin{align*}
(2- \LL) \cS_\omega(\varphi)
& = 2 \cS_\omega(\varphi) - \K(\varphi)\\
& = \omega \|\varphi\|_{L^2}^2 + \tfrac{d(p-1)-4}{2(p+1)}\int_{\R^d}
|\varphi|^{p+1} \,\mathrm{d}x,
\end{align*}
\begin{align*}
\LL(2- \LL) \cS_\omega(\varphi)
& = \omega \LL\|\varphi\|_{L^2}^2 + \tfrac{d(p-1)-4}{2(p+1)} \LL \|\varphi\|_{L^{p+1}}^{p+1}\\
& = \tfrac{d(p-1)-4}{2(p+1)} \LL \|\varphi\|_{L^{p+1}}^{p+1}\\
& = \tfrac{(d(p-1)-4)d(p-1)}{4(p+1)} \|\varphi\|_{L^{p+1}}^{p+1}.
\end{align*}
\end{proof}
Due to the lack of positivity of $\cS_\omega(\varphi)$, we introduce a non-negative functional
\EQ{ \label{eq2.3new}
\cH_\om(\fy) \pt  = \big(1 - \tfrac{\LL}2\big) \cS_\om(\fy) \\
                 \pt = \cS_\om(\fy) - \tfrac12 \K(\fy)\\
                \pt  = \tfrac{\om}2 \|\fy\|_{L^2}^2 + \tfrac{d(p-1)-4}{4(p+1)} \|\fy\|_{L^{p+1}}^{p+1},}
then for any $ \fy \in H^1(\R^d)\setminus \{0\}$, we have $\cH_\om(\fy) \ge 0,\  \LL \cH_\om(\fy) \ge 0.$
\begin{prop}\label{le2.4}
\EQ{ \label{eq2.4}
m_\omega \pt = \inf \{ \cH_\om(\fy): \fy \in H^1\setminus \{0\}, \K(\fy) \le 0\}\\
          \pt  = \inf \{ \cH_\om(\fy): \fy \in H^1\setminus \{0\}, \K(\fy) < 0\}.}
\end{prop}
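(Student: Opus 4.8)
The plan is to establish the two claimed identities by a chain of inequalities relating the three infima
\[
m_\omega, \quad
\mu_{\le} := \inf\{\cH_\om(\fy): \fy \in H^1\setminus\{0\},\ \K(\fy) \le 0\}, \quad
\mu_{<} := \inf\{\cH_\om(\fy): \fy \in H^1\setminus\{0\},\ \K(\fy) < 0\}.
\]
First I would note the trivial inclusion $\{\K < 0\} \subset \{\K \le 0\}$, which gives $\mu_{\le} \le \mu_{<}$ immediately. Next, on the set $\{\K = 0\}$ we have $\cS_\omega(\fy) = \cS_\omega(\fy) - \tfrac12\K(\fy) = \cH_\om(\fy)$ by \eqref{eq2.3new}, so $m_\omega = \inf\{\cH_\om(\fy):\K(\fy)=0\} \ge \inf\{\cH_\om(\fy):\K(\fy)\le 0\} = \mu_{\le}$. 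Thus $\mu_{\le} \le \mu_{<}$ and $\mu_{\le} \le m_\omega$; it remains to prove $m_\omega \le \mu_{<}$, which will close the loop and force all three to coincide.

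For the inequality $m_\omega \le \mu_{<}$, take any $\fy \in H^1\setminus\{0\}$ with $\K(\fy) < 0$ and apply Lemma \ref{le2.1}: since $\K(T_1\fy) = \K(\fy) < 0$, the threshold scale satisfies $\lambda_0(\fy) < 1$, so there exists $\lambda_0 \in (0,1)$ with $\K(T_{\lambda_0}\fy) = 0$, hence $\cS_\omega(T_{\lambda_0}\fy) \ge m_\omega$ by definition \eqref{eq1.7}. On the other hand, $\cS_\omega(T_{\lambda_0}\fy) = \cH_\om(T_{\lambda_0}\fy)$ because $\K$ vanishes there, and I claim $\cH_\om(T_{\lambda_0}\fy) \le \cH_\om(\fy)$. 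Indeed, from the explicit formula in \eqref{eq2.3new},
\[
\cH_\om(T_\lambda \fy) = \tfrac{\om}{2}\|\fy\|_{L^2}^2 + \tfrac{d(p-1)-4}{4(p+1)}\lambda^{\frac{d}{2}(p-1)-2}\|\fy\|_{L^{p+1}}^{p+1},
\]
using that $\|T_\lambda\fy\|_{L^2}^2 = \|\fy\|_{L^2}^2$ is scale-invariant and $\|T_\lambda\fy\|_{L^{p+1}}^{p+1} = \lambda^{\frac{d}{2}(p-1) - \frac{d}{p+1}\cdot\frac{p+1}{1}}\cdots$ — more cleanly, $\LL\cH_\om(\fy) \ge 0$ pointwise combined with $\tfrac{d}{d\lambda}\cH_\om(T_\lambda\fy) = \tfrac1\lambda(\LL\cH_\om)(T_\lambda\fy) \ge 0$ shows $\lambda \mapsto \cH_\om(T_\lambda\fy)$ is nondecreasing; since $\lambda_0 < 1$ this yields $\cH_\om(T_{\lambda_0}\fy) \le \cH_\om(T_1\fy) = \cH_\om(\fy)$. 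Combining, $m_\omega \le \cS_\omega(T_{\lambda_0}\fy) = \cH_\om(T_{\lambda_0}\fy) \le \cH_\om(\fy)$; taking the infimum over all such $\fy$ gives $m_\omega \le \mu_{<}$.

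Chaining the four inequalities $\mu_{\le} \le m_\omega$, $\mu_{\le} \le \mu_{<}$, and $m_\omega \le \mu_{<}$ is not yet enough — I need one more direction to pin down equality, namely $\mu_{<} \le m_\omega$ or equivalently $m_\omega \le \mu_{\le}$. For this I would take $\fy$ with $\K(\fy) \le 0$; if $\K(\fy) = 0$ then $\cH_\om(\fy) = \cS_\omega(\fy) \ge m_\omega$ directly, and if $\K(\fy) < 0$ the argument of the previous paragraph applies verbatim; either way $\cH_\om(\fy) \ge m_\omega$, so $\mu_{\le} \ge m_\omega$. Together with $\mu_{\le} \le m_\omega$ this gives $\mu_{\le} = m_\omega$, and then $m_\omega = \mu_{\le} \le \mu_{<} \le \cH_\om(T_{\lambda_0}\fy)$ bounds force $\mu_{<} = m_\omega$ as well (using once more that on $\{\K<0\}$ one has $\cH_\om \ge m_\omega$, hence $\mu_{<} \ge m_\omega$, while $\mu_{<} \ge \mu_{\le} = m_\omega$ and the reverse $\mu_{<} \le$ anything is not needed — actually $\mu_{<} = m_\omega$ follows since $m_\omega \le \mu_{<}$ and $\mu_{<} \le \mu_{\le}$ is false; instead $\mu_{\le} \le \mu_{<}$ and both sandwiched equal $m_\omega$). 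The main obstacle — really the only subtlety — is the monotonicity of $\lambda \mapsto \cH_\om(T_\lambda\fy)$, which I would justify cleanly from the nonnegativity $\LL\cH_\om \ge 0$ noted right after \eqref{eq2.3new} rather than by brute-force differentiation; everything else is bookkeeping with Lemma \ref{le2.1} and the identity $\cS_\omega = \cH_\om$ on $\{\K = 0\}$.
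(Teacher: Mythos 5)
Your first equality is fine: the case split ($\K(\fy)=0$ gives $\cH_\om(\fy)=\cS_\om(\fy)\ge m_\om$ directly from \eqref{eq1.7}; $\K(\fy)<0$ gives it after scaling down to $\lambda_0<1$ and using $\LL\cH_\om\ge 0$), combined with the easy bound $m_\om \ge \inf\{\cH_\om(\fy):\K(\fy)\le 0\}$, correctly yields $m_\om=\inf\{\cH_\om(\fy):\K(\fy)\le0\}$, and this matches the paper's reasoning. The gap is in the second equality. Writing $\mu_{\le}$ and $\mu_{<}$ as in your proposal, everything you prove about $\mu_{<}$ points the same way: $\mu_{<}\ge\mu_{\le}$ (trivial inclusion) and $m_\om\le\mu_{<}$ (your second paragraph) are both lower bounds on $\mu_{<}$, and nothing you write excludes $\mu_{<}>m_\om$. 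Your closing parenthetical asserting that ``the reverse $\mu_{<}\le$ anything is not needed'' is exactly backwards: the whole content of the second identity in \eqref{eq2.4} is the upper bound $\mu_{<}\le m_\om$, i.e. $\inf\{\cH_\om:\K<0\}\le\inf\{\cH_\om:\K\le 0\}$, which is the non-trivial direction because the strict constraint set is the smaller one; also ``$\mu_{<}\le m_\om$ or equivalently $m_\om\le\mu_{\le}$'' is not an equivalence.

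The paper closes this by approximating from inside the open constraint set: given $\fy\ne 0$ with $\K(\fy)\le 0$, one has $\K(T_\lambda\fy)<0$ for every $\lambda>1$ (by Lemma \ref{le2.1}, since $\lambda_0(\fy)\le 1$; the paper instead observes via \eqref{eq2.3} that $\LL\K(\fy)=2\K(\fy)-\tfrac{d(p-1)(d(p-1)-4)}{4(p+1)}\|\fy\|_{L^{p+1}}^{p+1}<0$), while $\cH_\om(T_\lambda\fy)=\tfrac{\om}2\|\fy\|_{L^2}^2+\tfrac{d(p-1)-4}{4(p+1)}\lambda^{\frac d2(p-1)}\|\fy\|_{L^{p+1}}^{p+1}\to\cH_\om(\fy)$ as $\lambda\downarrow 1$. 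Hence $\mu_{<}\le\cH_\om(\fy)$ for every $\fy$ with $\K(\fy)\le 0$, so $\mu_{<}\le\mu_{\le}=m_\om$, which is the missing step. Note in passing that your displayed scaling formula carries the wrong exponent ($\lambda^{\frac d2(p-1)-2}$ instead of $\lambda^{\frac d2(p-1)}$, since $\|T_\lambda\fy\|_{L^{p+1}}^{p+1}=\lambda^{\frac d2(p-1)}\|\fy\|_{L^{p+1}}^{p+1}$); this does not affect your monotonicity argument, which you anyway run through $\LL\cH_\om\ge0$, but the correct formula is what makes the $\lambda\downarrow1$ continuity step above immediate.
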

\begin{proof}
Since $\cS_\omega(\varphi) = \cH_\omega(\varphi)$ when $\K(\varphi) = 0$,
\begin{equation*}
m_\omega  \ge \inf\{ \cH_\omega(\varphi): \varphi \in H^1\setminus \{0\}, \K(\varphi) \le 0\}.
\end{equation*}
Claim: \begin{equation*}
m_\om
\le \inf\{ \cH_\omega(\varphi): \varphi \in H^1\setminus\{0\}, \K(\varphi) < 0\}.
\end{equation*}
In fact, $\forall\, \varphi \in H^1\setminus \{0\}$ with $\K(\varphi) < 0$, by Lemma \ref{le2.1}, there exists $0 < \lambda_0 < 1$ that $\K(T_{\lambda_0} \varphi) = 0$, then the fact $\LL \cH_\omega \ge 0$ implies
$\cS_\omega(T_{\lambda_0} \varphi) = \cH_\omega(T_{\lambda_0} \varphi) \le \cH_\omega(\varphi)$.

It suffices to show
\EQ{ \label{eq2.7v2}
\inf\{ \cH_\omega(\varphi): \varphi\in H^1\setminus\{0\}, \K(\varphi) \le 0\}
\ge \inf\{ \cH_\omega(\varphi): \varphi\in H^1\setminus\{0\}, \K(\varphi) < 0\}.
}
In fact, for any $\varphi \in H^1\setminus\{0\}$ with $\K(\varphi) \le 0$, by \eqref{eq2.3}, we know that
\begin{equation}\label{eq2.7v10}
\LL \,\K(\varphi)  = 2 \K(\varphi) -
\tfrac{d(p-1)(d(p-1)-4)}{4(p+1)} \|\varphi\|_{L^{p+1}}^{p+1} < 0,
\end{equation}
then for any $\lambda > 1$, we have $\K(T_\lambda \varphi) < 0$,
and as $\lambda \to 1$,
\begin{align*}
\cH_\omega(T_\lambda \varphi) & = \tfrac{\omega}2 \|T_\lambda \varphi\|_{L^2}^2 + \tfrac{d(p-1)-4}{4(p+1)} \|T_\lambda \varphi\|_{L^{p+1}}^{p+1}\\
  &   = \tfrac{\omega}2 \|\varphi\|_{L^2}^2 + \tfrac{d(p-1)-4}{4(p+1)} \lambda^{\frac{d}2(p-1)} \|\varphi\|_{L^{p+1}}^{p+1}\\
   & \to \tfrac{\omega}2 \|\varphi\|_{L^2}^2 + \tfrac{d(p-1)-4}{4(p+1)} \|\varphi\|_{L^{p+1}}^{p+1} = \cH_\omega(\varphi).
\end{align*}
This shows \eqref{eq2.7v2} and completes the proof.
\end{proof}
 We now give the value of $m_\omega$ for $1 + \tfrac4d < p < 1 + \tfrac4{d-2}, \, d\ge 3, \text{ and } 1 + \tfrac4d < p < \infty, d= 1,2$, namely prove Proposition \ref{bcpr1.1}.

{\it Proof of Proposition \ref{bcpr1.1}.}
Let $\varphi_n \in H^1(\R^d)$ be a minimizing sequence for \eqref{eq2.4}, namely
\[ \K(\varphi_n) \le 0, \ \varphi_n \ne 0, \ \cH_\omega(\varphi_n) \searrow m_\omega, \text{ as } n\to \infty.\]
Let $\varphi_n^*$ be the Schwartz symmetrization of $\varphi_n$, i.e. the radial decreasing rearrangement.
Since the symmetrization preserves the nonlinear parts and does not increase the $\dot{H}^1$ part, we have
$$\varphi_n^* \ne 0,\  \K(\varphi_n^*) \le \K(\varphi_n) \le 0 \text{ and  } \cH_\omega(\varphi_n^*) = \cH_\omega(\varphi_n) \to m_\omega, \text{ as } n\to \infty.$$
Then by Lemma \ref{le2.1} and \eqref{eq2.7v10}, there exists $0< \lambda_n \le 1$ such that $\psi_n = T_{\lambda_n} \varphi_n^*$ satisfies
\[ \psi_n \ne 0, \ \K(\psi_n) = 0,\ \cS_\omega(\psi_n) = \cH_\omega(\psi_n) \to m_\omega,\text{  as } n\to \infty.\]
Moreover, direct computation gives
\begin{equation*}
\tfrac{\omega}2 \|\psi_n\|_{L^2}^2 + \tfrac12 \|\nabla
\psi_n\|_{L^2}^2 \le \cS_\omega(\psi_n) + \tfrac4{d(p-1)-4}
\cH_\omega(\psi_n),
\end{equation*}
which implies the boundedness of $\psi_n$ in $H^1(\mathbb{R}^d)$.

 Then, $\psi_n$ converges weakly to some $\psi$ in $H^1(\R^d)$, up to a subsequence. Since $\psi_n$ is radial, it also converges strongly in
  $L^q(\R^d)$ for $2<  q < \tfrac{2d}{d-2}, d \ge 3 \text{ and }  2 < q < \infty, d = 1, 2$.
Thus, $\K(\psi) \le \underset{n\to \infty} \liminf \,\K(\psi_n) = 0$, $\cH_\omega(\psi) \le \underset{n\to \infty} \liminf\, \cH_\omega(\psi_n) =
  m_\omega$.
Moreover, $\psi \ne 0$. In fact, if $\psi = 0$, then $\K(\psi_n) = 0$ implies $K^Q(\psi_n) = - K^N(\psi_n) \to 0$, as $n\to \infty$, and
 by Lemma \ref{le2.2},
we have $\K(\psi_n) > 0$ for $n$ large, a contradiction.
Since $\K(\psi) \le 0$ and $\psi \ne 0$, we have $\cH_\omega(\psi) \ge  m_\omega$, so we have $\cH_\omega(\psi) =  m_\omega$ and $\K(\psi) \le 0$.

By scaling, we may replace $\psi$ by its rescaling, so that
\[ \K(\psi) = 0,\  \cS_\omega(\psi) = \cH_\omega(\psi) \le m_\omega \text{ and } \psi \ne 0.\]
Then $\psi$ is a minimizer and $m_\omega = \cH_\omega(\psi) > 0$.

By variational theory, there is a Lagrange multiplier $\eta \in \R$
such that $\cS_\omega'(\psi) = \eta \K'(\psi)$. We note
\begin{align*}
0 = \K(\psi) = \LL\cS_\omega(\psi) = \cS_\omega'(\psi) \LL\psi = \eta \K'(\psi) \LL\psi = \eta \cdot \LL^2 \cS_\omega(\psi).
\end{align*}
By \eqref{eq2.3} and $\LL \cS_\omega(\psi) = 0$, we have
\begin{align*}
\LL^2 \cS_\omega (\psi) & = 2 \LL \cS_\omega(\psi) - \tfrac{d(p-1)(d(p-1)-4)}{4(p+1)} \|\psi\|_{L^{p+1}}^{p+1}\\
                         & = - \tfrac{d(p-1)(d(p-1)-4)}{4(p+1)} \|\psi\|_{L^{p+1}}^{p+1} < 0,
\end{align*}
therefore $\eta = 0$ and $\psi$ is a solution to
$- \omega Q  + \Delta Q + |Q|^{p-1}Q - |Q|^\frac4d Q = 0$.

 The minimality of $\cS_\omega(Q)$ among the solutions is clear from \eqref{eq1.7}, since every solution $Q$ in $H^1(\R^d)$ of \eqref{eq2.8} satisfies
$\K(Q) = \cS_\omega'(Q) \LL Q = 0$. \qquad $\Box$

\vskip 0.2cm

  We now turn to find the ground state in the case $p = 1 + \tfrac4{d-2}, \, d \ge 3$, that is to prove Proposition \ref{pr1.1}.

 Let
 \begin{align}
 \K^0(\varphi) & = \int_{\R^d}\Big( |\nabla \varphi|^2 - |\varphi|^\frac{2d}{d-2}\Big)\,\mathrm{d}x,\label{eq2.9v2}\\
   \cH^0(\varphi) &  = \tfrac1d \|\varphi\|_{L^\frac{2d}{d-2}}^\frac{2d}{d-2}.\label{eq2.10v2}
\end{align}
We will show
\begin{lem}\label{le2.6v2}
For $p = 1 + \frac4{d-2}, \, d \ge 3$,
\EQ{ \label{eq2.11v2}
m_\omega & = \inf\{ H^0(\varphi): \varphi\in H^1\setminus \{0\}, \K^0(\varphi) < 0\}\\
          & = \inf\{ H^0(\varphi): \varphi\in H^1\setminus \{0\}, \K^0(\varphi) \le 0\}.}
\end{lem}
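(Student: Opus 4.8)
The plan is to deduce \eqref{eq2.11v2} from the characterization of $m_\om$ already obtained in Proposition \ref{le2.4}, by evaluating all the functionals at the energy-critical exponent $p = 1 + \frac4{d-2}$. First I would record the elementary identities valid at this value of $p$: since $p+1 = \frac{2d}{d-2}$ one has $\frac{d(p-1)}{2(p+1)} = 1$ and $\frac{d(p-1)-4}{4(p+1)} = \frac1d$, so by \eqref{eq1.6} and \eqref{eq2.3new}, for every $\fy \in H^1(\R^d)$,
\[
\K(\fy) = \K^0(\fy) + \tfrac{d}{d+2}\|\fy\|_{L^{\frac{2(d+2)}{d}}}^{\frac{2(d+2)}{d}}, \qquad \cH_\om(\fy) = \tfrac{\om}{2}\|\fy\|_{L^2}^2 + \cH^0(\fy),
\]
with $\K^0,\cH^0$ as in \eqref{eq2.9v2}--\eqref{eq2.10v2} (well defined on $H^1$ for $d\ge 3$). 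I would also note that Lemma \ref{le2.1} and Proposition \ref{le2.4} remain valid at $p = 1+\frac4{d-2}$, since their proofs only use $\frac d2(p-1) > 2$, i.e. $\frac{2d}{d-2} > 2$, which holds for $d\ge 3$; hence $m_\om = \inf\{\cH_\om(\fy): \fy \in H^1\setminus\{0\},\, \K(\fy) \le 0\} = \inf\{\cH_\om(\fy): \fy \in H^1\setminus\{0\},\, \K(\fy) < 0\}$.

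For the inequality $m_\om \ge \inf\{\cH^0(\fy): \fy\in H^1\setminus\{0\},\,\K^0(\fy) \le 0\}$, I would take any $\fy \in H^1\setminus\{0\}$ with $\K(\fy) \le 0$: because the mass-critical term is nonnegative, $\K^0(\fy) = \K(\fy) - \tfrac{d}{d+2}\|\fy\|_{L^{2(d+2)/d}}^{2(d+2)/d} \le \K(\fy) \le 0$, and likewise $\cH_\om(\fy) \ge \cH^0(\fy)$; taking the infimum over such $\fy$ gives the claim. For the reverse inequality I would use the $\dot H^1$-critical rescaling $\fy_\mu(x) = \mu^{\frac{d-2}{2}}\fy(\mu x)$, which leaves $\K^0$ and $\cH^0$ invariant while $\|\fy_\mu\|_{L^2}^2 = \mu^{-2}\|\fy\|_{L^2}^2 \to 0$ and $\|\fy_\mu\|_{L^{2(d+2)/d}}^{2(d+2)/d} = \mu^{-4/d}\|\fy\|_{L^{2(d+2)/d}}^{2(d+2)/d} \to 0$ as $\mu \to \infty$. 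Thus for any $\fy$ with $\K^0(\fy) < 0$ one gets $\K(\fy_\mu) \to \K^0(\fy) < 0$, so $\K(\fy_\mu) < 0$ for $\mu$ large, while $\cH_\om(\fy_\mu) \to \cH^0(\fy)$; by the characterization of $m_\om$ above this yields $m_\om \le \cH^0(\fy)$, hence $m_\om \le \inf\{\cH^0(\fy): \K^0(\fy) < 0\}$.

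It then remains to identify the two infima. The bound $\inf\{\cH^0: \K^0 \le 0\} \le \inf\{\cH^0: \K^0 < 0\}$ is trivial. Conversely, given $\fy \in H^1\setminus\{0\}$ with $\K^0(\fy) \le 0$, I would dilate by a scalar $\la > 1$: since $\frac{2d}{d-2} > 2$,
\[
\K^0(\la\fy) = \la^2\|\nabla\fy\|_{L^2}^2 - \la^{\frac{2d}{d-2}}\|\fy\|_{L^{\frac{2d}{d-2}}}^{\frac{2d}{d-2}} \le \big(\la^2 - \la^{\frac{2d}{d-2}}\big)\|\fy\|_{L^{\frac{2d}{d-2}}}^{\frac{2d}{d-2}} < 0,
\]
while $\cH^0(\la\fy) = \la^{\frac{2d}{d-2}}\cH^0(\fy) \to \cH^0(\fy)$ as $\la \to 1^+$; letting $\la\downarrow1$ gives $\inf\{\cH^0: \K^0 < 0\} \le \cH^0(\fy)$, hence $\le \inf\{\cH^0: \K^0\le0\}$. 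Chaining $\inf\{\cH^0:\K^0\le0\} \le m_\om \le \inf\{\cH^0:\K^0<0\} \le \inf\{\cH^0:\K^0\le0\}$ forces equality throughout, which is \eqref{eq2.11v2}.

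The only real subtlety is keeping the two scalings straight: the $\dot H^1$-critical scaling $\fy\mapsto\mu^{(d-2)/2}\fy(\mu\cdot)$ is exactly what decouples the energy-critical functionals $\K^0,\cH^0$ from the subcritical ($L^2$ and mass-critical) remainders, and one must verify that both remainders carry strictly negative powers of $\mu$ (namely $\mu^{-2}$ and $\mu^{-4/d}$), so that they disappear in the limit without distorting $\cH^0$; the plain dilation $\la\fy$ is used only to compare the two constrained infima. Everything else is a direct consequence of Proposition \ref{le2.4} and the explicit form of $\K$ and $\cH_\om$ at $p = 1 + \frac4{d-2}$.
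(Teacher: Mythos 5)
Your proposal is correct and follows essentially the same route as the paper: both directions rest on Proposition \ref{le2.4} together with the $\dot H^1$-critical rescaling $\lambda^{\frac{d-2}{2}}\varphi(\lambda\cdot)$, $\lambda\to\infty$ (the paper's $\widetilde T_\lambda$), which sends $\K\to\K^0$ and $\cH_\omega\to\cH^0$ and yields $m_\omega\le\inf\{\cH^0:\K^0<0\}$, while the pointwise inequalities $\K^0\le\K$, $\cH^0\le\cH_\omega$ give the converse. The only cosmetic difference is in comparing the constraints $\K^0<0$ and $\K^0\le0$: you perturb by scalar multiplication $\lambda\varphi$ with $\lambda\downarrow1$, whereas the paper uses the scaling $T_\lambda\varphi$ with $\lambda>1$ via the sign of $\LL\,\K^0$ --- both are one-line arguments with the same effect.
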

\begin{proof}
Since $\K^0(\varphi) \le \K(\varphi)$, $\cH^0(\varphi) \le \cH_\omega(\varphi)$, it follows that
\begin{align*}
m_\omega = \inf\{ \cH_\omega(\varphi): \varphi\in H^1\setminus\{0\}, \K(\varphi) < 0\}
         \ge \inf\{ \cH^0(\varphi): \varphi\in H^1\setminus \{0\}, \K^0(\varphi) < 0\}.
\end{align*}
Hence, in order to show the first equality, it suffices to show
\begin{equation}\label{eq2.12v2}
\inf\{ \cH_\omega(\varphi): \varphi \in H^1 \setminus \{0\}, \K(\varphi) < 0\}
\le \inf\{ \cH^0(\varphi): \varphi \in H^1\setminus \{0\}, \K^0(\varphi) < 0\}.
\end{equation}
For any $\varphi \in H^1\setminus\{0\}$ with $\K^0(\varphi) < 0$, taking $\widetilde{T}_\lambda \varphi(x) = \lambda^\frac{d-2}2 \varphi(\lambda x)$,
we have $\text{as }  \lambda \to \infty$,
\begin{align*}
\K(\widetilde{T}_\lambda \varphi) & = \int_{\R^d}\Big( |\nabla \varphi|^2 - |\varphi|^\frac{2d}{d-2} + \tfrac{d}{d+2} \lambda^{-\frac4d}
|\varphi|^\frac{2(d+2)}d\Big) \,\mathrm{d}x \to \K^0(\varphi),\\
\cH_\omega(\widetilde{T}_\lambda \varphi) & = \tfrac{\omega}2
\lambda^{-2} \|\varphi\|_{L^2}^2  + \tfrac1d
\|\varphi\|_{L^\frac{2d}{d-2}}^\frac{2d}{d-2} \to \cH^0(\varphi).
\end{align*}
This gives \eqref{eq2.12v2} and completes the proof of the first equality.

  For the second equality, it suffices to show
\begin{equation} \label{eq2.13v2}
\inf\{\cH^0(\varphi): \varphi\in H^1\setminus \{0\}, \K^0(\varphi) < 0\}\le \inf\{ \cH^0(\varphi): \varphi \in H^1\setminus \{0\}, \K^0(\varphi) \le 0\}.
\end{equation}
For any $\varphi \in H^1\setminus\{0\}$ with $\K^0(\varphi) \le 0$ and
 \begin{align*}
\LL \,\K^0(\varphi)  = \int \Big(2|\nabla \varphi|^2 - \tfrac{2d}{d-2}
|\varphi|^\frac{2d}{d-2} \Big)\,\mathrm{d}x
                     = 2 \K^0(\varphi) - \tfrac4{d-2} \|\varphi\|_{L^\frac{2d}{d-2}}^\frac{2d}{d-2} < 0,
\end{align*}
which implies
$\K^0(T_\lambda \varphi)  < 0,   \text{ for } \lambda>1.$
We also have
\begin{align*}
\cH^0(T_\lambda \varphi)  = \tfrac1d \lambda^\frac{2d}{d-2}
\|\varphi\|_{L^\frac{2d}{d-2}}^\frac{2d}{d-2}
\to \cH^0(\varphi),  \text{ as }  \lambda \to 1,
\end{align*}
so we obtain \eqref{eq2.13v2} and complete the proof.
\end{proof}
{\it Proof of Proposition \ref{pr1.1}.}

By Lemma \ref{le2.6v2}, we have
\begin{align}
m_\omega & = \inf\left\{ \tfrac1d \|\varphi\|_{L^\frac{2d}{d-2}}^\frac{2d}{d-2}: \varphi \in H^1\setminus \{0\}, \|\nabla \varphi\|_{L^2}^2 \le \|\varphi\|_{L^\frac{2d}{d-2}}^\frac{2d}{d-2}\right\} \nonumber \\
 & \ge \inf\left\{ \tfrac1d\|\nabla \varphi\|_{L^2}^2: \varphi
 \in H^1\setminus\{0\}, \|\nabla \varphi\|_{L^2}^2 \le \|\varphi\|_{L^\frac{2d}{d-2}}^\frac{2d}{d-2}\right\} \nonumber \label{eq2.18v7}\\
& \ge  \inf\left\{\tfrac1d \|\nabla \varphi\|_{L^2}^2 \left(\tfrac{\|\nabla \varphi\|_{L^2}^2}{ \|\varphi\|_{L^\frac{2d}{d-2}}^\frac{2d}{d-2}}\right)^\frac{d-2}2: \varphi\in H^1\setminus \{0\}\right\} \nonumber \\
& \ge \inf \left\{ \tfrac1d \left(\tfrac{\|\nabla
\varphi\|_{L^2}}{\|\varphi \|_{L^\frac{2d}{d-2}}}\right)^d:
\varphi\in \dot{H}^1\setminus \{0\}\right\} = \tfrac1d(C_d^*)^{-d},
\nonumber
\end{align}
where $C_d^*$ is the sharp Sobolev constant in $\R^d$, that is
\begin{equation}\label{eqsobolev}
\|\varphi\|_{L^\frac{2d}{d-2}(\R^d)} \le C_d^* \|\nabla \varphi\|_{L^2}, \ \forall\, \varphi \in \dot{H}^1(\R^d),
\end{equation}
with the equality is attained by $W$ (see \cite{Aubin},
\cite{Talenti}) and $\tfrac1d(C_d^*)^{-d} = \E^0(W)$.

On the other hand, by the density $H^1(\R^d)$ in $\dot{H}^1(\R^d)$, we can find $\varphi_n\in H^1\setminus \{0\}$, such that $\varphi_n \to W \text{ in } \dot{H}^1(\R^d), \text{  as
} n \to \infty$. Then we have $\cH^0(\varphi_n) \to \cH^0(W) = \E^0(W), \text{ as } n\to \infty$,
by Lemma \ref{le2.6v2}, $\cH^0(\varphi_n) \ge m_\omega$ for $n$ large enough. So we have $m_\omega \le \E^0(W)$.
Thus, we obtain \eqref{eq2.14v2}.
 \hspace{5cm} $\Box$

Next we show the energy-trapping properties of $A_{\omega,\pm}$.
\begin{lem}\label{le2.6}
For $1 + \tfrac4d < p \le 1 + \tfrac4{d-2}, \, d\ge 3, \text{ and }
1 + \tfrac4d < p < \infty, \, d = 1, 2$, we have for any $ \fy \in
H^1(\R^d)$ with $\K(\fy) \ge 0$, \EQ{  \label{eq2.9}
  \tfrac{d(p-1)-4}{d(p-1)} \int \tfrac12|\na \fy|^2 + \tfrac{d}{2(d+2)} |\fy|^\frac{2(d+2)}d \,\mathrm{d}x
\le   \E(\fy)   \le \int \tfrac12|\na \fy|^2 + \tfrac{d}{2(d+2)}
|\fy|^\frac{2(d+2)}d \,\mathrm{d}x. }
\end{lem}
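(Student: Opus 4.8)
The plan is to derive both inequalities in \eqref{eq2.9} from a single exact identity relating $\E$ and $\K$. I would first set the shorthand $G=\|\na\varphi\|_{L^2}^2$, $P=\|\varphi\|_{L^{p+1}}^{p+1}$, $N=\|\varphi\|_{L^{2(d+2)/d}}^{2(d+2)/d}$, so that by definition $\E(\varphi)=\tfrac12 G-\tfrac1{p+1}P+\tfrac{d}{2(d+2)}N$ and $\K(\varphi)=G-\tfrac{d(p-1)}{2(p+1)}P+\tfrac{d}{d+2}N$. The relevant combination is forced by requiring the $P$-terms to cancel, and since $\tfrac{2}{d(p-1)}\cdot\tfrac{d(p-1)}{2(p+1)}=\tfrac1{p+1}$, a direct check of the two remaining coefficients (using $\tfrac12-\tfrac2{d(p-1)}=\tfrac12\cdot\tfrac{d(p-1)-4}{d(p-1)}$ and $\tfrac{d}{2(d+2)}-\tfrac{2}{(p-1)(d+2)}=\tfrac{d}{2(d+2)}\cdot\tfrac{d(p-1)-4}{d(p-1)}$) produces the key identity
\[
\E(\varphi)=\tfrac{2}{d(p-1)}\,\K(\varphi)+\tfrac{d(p-1)-4}{d(p-1)}\int_{\R^d}\Big(\tfrac12|\na\varphi|^2+\tfrac{d}{2(d+2)}|\varphi|^{\frac{2(d+2)}d}\Big)\,\mathrm{d}x .
\]

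From here, the right-hand (upper) bound in \eqref{eq2.9} is immediate and does not even use $\K(\varphi)\ge0$: since $-\tfrac1{p+1}|\varphi|^{p+1}\le0$ pointwise, dropping that term in the definition of $\E$ already gives $\E(\varphi)\le\int_{\R^d}\big(\tfrac12|\na\varphi|^2+\tfrac{d}{2(d+2)}|\varphi|^{\frac{2(d+2)}d}\big)\,\mathrm{d}x$. For the left-hand (lower) bound I would invoke the standing hypothesis $p>1+\tfrac4d$, which forces $d(p-1)-4>0$ and in particular $\tfrac{2}{d(p-1)}>0$; combining this with $\K(\varphi)\ge0$, the key identity yields at once $\E(\varphi)\ge\tfrac{d(p-1)-4}{d(p-1)}\int_{\R^d}\big(\tfrac12|\na\varphi|^2+\tfrac{d}{2(d+2)}|\varphi|^{\frac{2(d+2)}d}\big)\,\mathrm{d}x$, which is exactly the left inequality in \eqref{eq2.9}.

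I do not anticipate a genuine obstacle: the entire proof reduces to the one observation that the combination $\E-\tfrac{2}{d(p-1)}\K$ annihilates the $L^{p+1}$-term, and that weight is uniquely determined by the ratio of the $P$-coefficients in $\E$ and $\K$. The argument is insensitive to whether $p$ is energy-subcritical or the energy-critical endpoint $p=1+\tfrac4{d-2}$ with $d\ge3$: it uses only the algebra of the three homogeneous functionals $G$, $P$, $N$ together with the sign $d(p-1)-4>0$, both of which are available in all the cases covered by the statement.
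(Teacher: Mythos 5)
Your proof is correct and is essentially the paper's argument: the paper's lower bound is obtained by using $\K(\varphi)\ge 0$ to bound $\tfrac1{p+1}\int|\varphi|^{p+1}$ by $\tfrac{2}{d(p-1)}\int\big(|\na\varphi|^2+\tfrac{d}{d+2}|\varphi|^{\frac{2(d+2)}d}\big)$ and substituting, which is exactly your identity $\E=\tfrac{2}{d(p-1)}\K+\tfrac{d(p-1)-4}{d(p-1)}\int\big(\tfrac12|\na\varphi|^2+\tfrac{d}{2(d+2)}|\varphi|^{\frac{2(d+2)}d}\big)$ rearranged, and the upper bound is the same trivial dropping of the negative $L^{p+1}$ term. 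Packaging the computation as an exact identity is a tidy but only cosmetic difference.
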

\begin{proof}
On the one hand,
\begin{align*}
\E(\varphi) & = \int \tfrac12 |\nabla \varphi|^2 - \tfrac1{p+1} |\varphi|^{p+1} + \tfrac{d}{2(d+2)} |\varphi|^\frac{2(d+2)}d\,\mathrm{d}x\\
            & \le \int \tfrac12 |\nabla \varphi|^2 + \tfrac{d}{2(d+2)} |\varphi|^\frac{2(d+2)}d \,\mathrm{d}x.
\end{align*}
On the other hand, since
\begin{equation*}
\K(\varphi) = \int |\nabla \varphi|^2 - \tfrac{d(p-1)}{2(p+1)}
|\varphi|^{p+1} + \tfrac{d}{d+2} |\varphi|^\frac{2(d+2)}d
\,\mathrm{d}x \ge 0,
\end{equation*}
we have
\begin{equation*}
\tfrac1{p+1} \int |\varphi|^{p+1} \,\mathrm{d}x \le \tfrac2{d(p-1)}
\int |\nabla \varphi|^2 + \tfrac{d}{d+2} |\varphi|^\frac{2(d+2)}d
\,\mathrm{d}x.
\end{equation*}
So
\begin{align*}
\E(\varphi) & = \int \tfrac12|\nabla \varphi|^2 - \tfrac1{p+1} |\varphi|^{p+1} + \tfrac{d}{2(d+2)} |\varphi|^\frac{2(d+2)}d \,\mathrm{d}x\\
             & \ge  \int \tfrac12 |\nabla \varphi|^2 + \tfrac{d}{2(d+2)} |\varphi|^\frac{2(d+2)}d \,\mathrm{d}x
              - \tfrac2{d(p-1)} \int  |\nabla \varphi|^2 + \tfrac{d}{d+2} |\varphi|^\frac{2(d+2)}d \,\mathrm{d}x\\\
             &  = \int \left(\tfrac12 - \tfrac2{d(p-1)}\right) |\nabla \varphi|^2 + \left(\tfrac{d}{2(d+2)} - \tfrac2{(d+2)(p-1)}\right) |\varphi|^\frac{2(d+2)}d \,\mathrm{d}x\\
              & = \tfrac{d(p-1)-4}{d(p-1)} \int \tfrac12|\na \fy|^2 + \tfrac{d}{2(d+2)} |\fy|^\frac{2(d+2)}d \,\mathrm{d}x.
\end{align*}
\end{proof}
\begin{prop}[Energy-trapping for $A_{\omega,-}$]\label{le2.7}
 For $1 + \tfrac4d < p \le  1 + \tfrac4{d-2}, \, d \ge 3, \  1 + \tfrac4d < p < \infty, \, d = 1, 2$,  and $u_0\in A_{\om,-}$.
Let $u$ be the solution of \eqref{eq1.1}, and $I_{max}$ be the lifespan of $u$, then
\begin{equation} \label{eq2.11}
\K(u(t)) < - \big(m_\om - \cS_\om(u(t))\big), \ \ \forall \,t\in I_{max}.
\end{equation}
\end{prop}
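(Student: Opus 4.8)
The plan is to combine the conservation laws for \eqref{eq1.1} with the variational identity for $m_\omega$ recorded in Proposition \ref{le2.4}, together with a standard continuity-in-time argument showing that the set $A_{\omega,-}$ is preserved by the flow. Recall from \eqref{eq2.3new} that $\cH_\omega(\varphi) = \cS_\omega(\varphi) - \tfrac12 \K(\varphi) \ge 0$. Since $\M$ and $\E$ are conserved by \eqref{eq1.1}, so is $\cS_\omega$, and hence $\cS_\omega(u(t)) = \cS_\omega(u_0) < m_\omega$ for every $t\in I_{max}$. Moreover $u_0 \ne 0$ (because $\K(u_0) < 0$), so by conservation of mass $\M(u(t)) = \M(u_0) > 0$ and $u(t) \ne 0$ for all $t \in I_{max}$.

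The first step is to show $\K(u(t)) < 0$ on all of $I_{max}$, i.e. $A_{\omega,-}$ is invariant. The map $t \mapsto \K(u(t))$ is continuous on the interval $I_{max}$: by the local theory of Section 3 one has $u \in C(I_{max};H^1(\R^d))$, and $\K$ is continuous on $H^1(\R^d)$ since $\|\na\,\cdot\,\|_{L^2}^2$ is continuous and, by Sobolev embedding, $H^1(\R^d) \hookrightarrow L^{p+1}(\R^d) \cap L^{\frac{2(d+2)}d}(\R^d)$ in the relevant range of $(p,d)$. If $\K(u(t)) \ge 0$ for some $t \in I_{max}$, then since $\K(u_0) < 0$ and $I_{max}$ is connected there exists $t_0 \in I_{max}$ with $\K(u(t_0)) = 0$ and $u(t_0) \ne 0$; by the definition \eqref{eq1.7} of $m_\omega$ this forces $\cS_\omega(u(t_0)) \ge m_\omega$, contradicting $\cS_\omega(u(t_0)) = \cS_\omega(u_0) < m_\omega$. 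Hence $\K(u(t)) < 0$ throughout $I_{max}$.

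The bound \eqref{eq2.11} then follows at once. Fix $t \in I_{max}$; since $u(t) \ne 0$ and $\K(u(t)) < 0$, Proposition \ref{le2.4} gives $\cH_\omega(u(t)) \ge m_\omega$, that is $\cS_\omega(u(t)) - \tfrac12\K(u(t)) \ge m_\omega$, which rearranges to
\begin{equation*}
\K(u(t)) \le -2\big(m_\omega - \cS_\omega(u(t))\big) < -\big(m_\omega - \cS_\omega(u(t))\big),
\end{equation*}
the last (strict) inequality holding because $m_\omega - \cS_\omega(u(t)) = m_\omega - \cS_\omega(u_0) > 0$. This is precisely \eqref{eq2.11}.

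The only point requiring genuine care is the invariance step, and there the subtlety is entirely the continuity of the flow in $H^1(\R^d)$ and of $\K$ on $H^1(\R^d)$: in the energy-critical case $p = 1 + \tfrac4{d-2}$ this rests on the (energy-subcritical) local well-posedness and perturbation theory stated in Section 3 rather than on any new ingredient, while in the strictly subcritical range it is standard. Everything else is bookkeeping with the conservation laws and Proposition \ref{le2.4}; note in particular that one does not need the sharper structure of $\K(T_\lambda\varphi)$ used in Lemma \ref{le2.1}, although an alternative proof could instead be based on running $\lambda_0(\varphi) \le 1$ from Lemma \ref{le2.1} and integrating $\tfrac{\mathrm d}{\mathrm d\lambda}\cS_\omega(T_\lambda\varphi) = \tfrac1\lambda\K(T_\lambda\varphi)$.
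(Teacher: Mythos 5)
Your argument is correct, and the first half (the flow-invariance of the sign of $\K$ via conservation of $\cS_\omega$, continuity of $t\mapsto\K(u(t))$, and the definition \eqref{eq1.7} of $m_\omega$) is essentially identical to the paper's; you are in fact slightly more careful than the paper in noting that mass conservation keeps $u(t)\ne 0$, which is needed before invoking \eqref{eq1.7}. Where you diverge is the second step: the paper scales to the zero of $\K$, i.e. picks $0<\lambda(t)<1$ with $\K(T_{\lambda(t)}u(t))=0$ (Lemma \ref{le2.1}), uses $\LL^2\cS_\omega(u(t))<0$ from Lemma \ref{le2.3} to get a concavity/tangent-line estimate $\cS_\omega(u(t))>\cS_\omega(T_{\lambda(t)}u(t))+(1-\lambda(t))\K(u(t))\ge m_\omega+\K(u(t))$, whereas you simply invoke the variational characterization $m_\omega=\inf\{\cH_\omega(\varphi):\varphi\in H^1\setminus\{0\},\ \K(\varphi)<0\}$ of Proposition \ref{le2.4} applied to $u(t)$, which immediately yields the stronger bound $\K(u(t))\le -2\big(m_\omega-\cS_\omega(u(t))\big)$ and hence \eqref{eq2.11}. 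Both routes are legitimate here since Proposition \ref{le2.4} is established before Proposition \ref{le2.7}; your version is shorter and gives a better constant, while the paper's tangent-line argument is the one that works directly from the definition of $m_\omega$ via $\cS_\omega$ on $\{\K=0\}$ and does not need the $\cH_\omega$-reformulation (which itself is proved using the same scaling monotonicity you bypass). So the difference is one of packaging rather than substance, but your write-up is a genuine and correct simplification.
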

\begin{proof}
We first claim that $\K(u(t)) < 0, \text{ for } t\in I_{max}$.
Indeed, since $u_0\in A_{\om,-}$,
we have by the mass and energy conservation
 that
$\cS_\omega(u(t)) < m_\omega, \text{ for }  t \in I_{max}$.
If $\K(u(t_0)) \ge 0$ for some $t_0 \in I_{max}$, then there is $t_1 \in I_{max}$, such that $\K(u(t_1)) = 0$. So we have $\cS_\omega(u(t_1)) \ge m_\omega$, which contradicts $\cS_\omega(u(t)) < m_\omega, \text{ for all }  t\in I_{max}$, so we have $\K(u(t)) < 0 \text{ for } t\in I_{max}$.

Next, we turn to \eqref{eq2.11}. By the above claim, for any $ t \in I_{max}$, there exists $0 < \lambda(t) < 1$ such that $\K(T_{\lambda(t)} u(t)) = 0$,
which together with the definition of $m_\omega$ shows that $\cS_\omega(T_{\lambda(t)} u(t)) \ge m_\omega$.
By Lemma \ref{le2.3},
\begin{align*}
\LL^2 \,\cS_\omega(u(t))
& = 2 \LL \,\cS_\omega(u(t)) - \tfrac{d(p-1)(d(p-1)-4)}{4(p+1)} \|u(t)\|_{L^{p+1}}^{p+1}\\
& = 2 \K(u(t)) - \tfrac{d(p-1)(d(p-1)-4)}{4(p+1)}
\|u(t)\|_{L^{p+1}}^{p+1} < 0,
\end{align*}
we have
\begin{align*}
\cS_\omega(u(t))&  > \cS_\omega(T_{\lambda(t)} u(t)) + (1
-\lambda(t))\tfrac{\mathrm{d}}{\mathrm{d}\lambda}\big|_{\lambda = 1}
\cS_\omega(T_\lambda u(t))\\
                  &  = \cS_\omega(T_{\lambda(t)} u(t)) + (1 - \lambda(t)) \K(u(t)) \\
                  &  > m_\omega + \K(u(t)),
\end{align*}
so
$\K(u(t)) < - (m_\omega - \cS_\omega(u(t)))$.
\end{proof}
Before discussing the energy-trapping for $A_{\om,+}$, we first show $\forall\, \om > 0$, $A_{\om,+}$ is bounded in $H^1(\R^d)$.
\begin{lem}\label{le2.8}
 Let $\om > 0$ and $u \in A_{\om,+}$, then we have
\begin{equation*}
\norm{u}_{L^2}^2 \le \tfrac{2m_{\om}}\om, \  \norm{\na u}_{L^2}^2
\les m_\om,
\end{equation*}
and hence
\begin{equation} \label{eq2.12}
\norm{u}_{H^1}^2 \les m_\om + \tfrac{m_\om}\om.
\end{equation}
\end{lem}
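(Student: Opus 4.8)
The plan is to deduce both bounds directly from the nonnegative functional $\cH_\om$ of \eqref{eq2.3new} together with Lemma \ref{le2.6}, using only the two defining properties of $u\in A_{\om,+}$, namely $\K(u)\ge 0$ and $\cS_\om(u)<m_\om$ (the latter being available for the initial datum by definition of $A_{\om,+}$).

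First I would note that $\K(u)\ge 0$ gives
\[
\cH_\om(u)=\cS_\om(u)-\tfrac12\K(u)\le\cS_\om(u)<m_\om .
\]
Since $p>1+\tfrac4d$ we have $d(p-1)-4>0$ in every case under consideration, so in the identity $\cH_\om(u)=\tfrac{\om}2\|u\|_{L^2}^2+\tfrac{d(p-1)-4}{4(p+1)}\|u\|_{L^{p+1}}^{p+1}$ both terms are nonnegative; in particular $\tfrac{\om}2\|u\|_{L^2}^2\le\cH_\om(u)<m_\om$, which yields $\|u\|_{L^2}^2\le\tfrac{2m_\om}{\om}$.

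For the gradient I would apply Lemma \ref{le2.6}: because $\K(u)\ge 0$, its lower bound, combined with $\M(u)\ge 0$, gives
\[
\tfrac{d(p-1)-4}{2d(p-1)}\,\|\na u\|_{L^2}^2\le\tfrac{d(p-1)-4}{d(p-1)}\int\tfrac12|\na u|^2+\tfrac{d}{2(d+2)}|u|^\frac{2(d+2)}d\,\mathrm{d}x\le\E(u)\le\cS_\om(u)<m_\om .
\]
As $d$ and $p$ are fixed, the prefactor $\tfrac{2d(p-1)}{d(p-1)-4}$ is an absolute constant, hence $\|\na u\|_{L^2}^2\les m_\om$. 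Adding the two estimates gives $\|u\|_{H^1}^2=\|u\|_{L^2}^2+\|\na u\|_{L^2}^2\les m_\om+\tfrac{m_\om}{\om}$, which is \eqref{eq2.12}.

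I do not anticipate any real obstacle: the lemma is an immediate consequence of the sign of $d(p-1)-4$ and of Lemma \ref{le2.6}, both already established. The only point needing (trivial) care is that all implied constants depend solely on the fixed parameters $d,p$, so that they may be harmlessly absorbed into $\les$; one should also note that the bound is applied at the level of a fixed function $u\in A_{\om,+}$, so no propagation in time is involved at this stage.
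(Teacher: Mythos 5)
Your argument is correct and follows essentially the same route as the paper: the $L^2$ bound comes from $\cH_\om(u)\le\cS_\om(u)<m_\om$ using $\K(u)\ge 0$ and positivity of both terms in $\cH_\om$, and the gradient bound from $\E(u)\le\cS_\om(u)<m_\om$ together with the lower bound $\tfrac{d(p-1)-4}{2d(p-1)}\|\na u\|_{L^2}^2\le\E(u)$ valid when $\K(u)\ge 0$. The only cosmetic difference is that you cite Lemma \ref{le2.6} for this last inequality while the paper redoes that computation inline, arriving at the identical constant $\tfrac12-\tfrac{2}{d(p-1)}$.
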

\begin{proof}
By $u\in A_{\om,+}$, we have
\begin{align*}
& \cH_\omega(u) \le \cS_\omega(u) \le m_\omega,
\end{align*}
which implies
\begin{equation*}
\|u\|_{L^2}^2 \le \tfrac{2 m_\omega}\omega.
\end{equation*}
The boundedness of $\|\nabla u\|_{L^2}$ follows from
\begin{align*}
m_\omega\ge \cS_\omega(u) & \ge \E(u) = \int \tfrac12|\nabla u|^2 - \tfrac1{p+1} |u|^{p+1} + \tfrac{d}{2(d+2)} |u|^\frac{2(d+2)}d\,\mathrm{d}x\\
                &  \ge \int \tfrac12 |\nabla u|^2 + \tfrac{d}{2(d+2)} |u|^\frac{2(d+2)}d \,\mathrm{d}x
               - \tfrac2{d(p-1)} \left( \|\nabla u\|_{L^2}^2 + \tfrac{d}{d+2} \|u\|_{L^\frac{2(d+2)}d}^\frac{2(d+2)}d\right)\\
                                & \ge \left(\tfrac12 - \tfrac2{d(p-1)} \right) \|\nabla u\|_{L^2}^2,
\end{align*}
where the first inequality is given by $\K(u) \ge 0$.
\end{proof}

\begin{prop}[Energy-trapping for $A_{\omega,+}$]\label{le2.9}
For $u_0\in A_{\omega,+}$, let $u$ be a solution of \eqref{eq1.1}, with $I_{max}$ the lifespan, we have
$\delta > 0$ depending on $d$, $p$ and $\omega$ such that for $t \in I_{max}$,
\begin{equation} \label{eq2.14}
\K(u(t)) \ge \min\left\{\tfrac{d(p-1)-4}{d(p-1)}\Big( \norm{\na
u(t)}_{L^2}^2 + \tfrac{d}{d+2}
\norm{u(t)}_{L^\frac{2(d+2)}d}^\frac{2(d+2)}d\Big), \delta
\Big(m_\om - \cS_\om(u(t))\Big)\right\}.
\end{equation}
\end{prop}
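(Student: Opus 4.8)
The plan is a standard energy-trapping argument. By conservation of mass and energy, $\cS_\om(u(t)) = \cS_\om(u_0) < m_\om$ for all $t\in I_{max}$. First I would check that $A_{\om,+}$ is invariant under the flow, i.e.\ $\K(u(t))\ge 0$ for every $t\in I_{max}$: the map $t\mapsto\K(u(t))$ is continuous (the solution lies in $C(I_{max};H^1)$ by the local theory of Section~3) and $\K(u_0)\ge 0$, so if $\K$ ever became negative there would be a time $t_1$ with $u(t_1)\neq 0$ and $\K(u(t_1))=0$, forcing $\cS_\om(u(t_1))\ge m_\om$ by the definition \eqref{eq1.7}, a contradiction. (If $u_0=0$ the statement is trivial, so assume $u_0\neq 0$, hence $u(t)\neq 0$ for all $t$ by mass conservation.)

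Now fix $t\in I_{max}$ and write $\varphi=u(t)\neq 0$, so $\K(\varphi)\ge 0$ and $\cS_\om(\varphi)<m_\om$. By Lemma~\ref{le2.1} there is a unique $\lambda_0=\lambda_0(\varphi)$ with $\K(T_{\lambda_0}\varphi)=0$, and $\lambda_0\ge 1$ because $\K(\varphi)\ge 0$; in fact $\lambda_0>1$, since $\lambda_0=1$ would give $\K(\varphi)=0$ with $\varphi\neq 0$, hence $\cS_\om(\varphi)\ge m_\om$, a contradiction. Since $T_{\lambda_0}\varphi\neq 0$ and $\K(T_{\lambda_0}\varphi)=0$, we have $\cS_\om(T_{\lambda_0}\varphi)\ge m_\om$. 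A direct scaling computation (as in the proof of Lemma~\ref{le2.1}) gives $\K(T_\lambda\varphi)=\lambda^2 A-\lambda^{s}B$, where $A=\norm{\na\varphi}_{L^2}^2+\tfrac{d}{d+2}\norm{\varphi}_{L^{2(d+2)/d}}^{2(d+2)/d}$, $B=\tfrac{d(p-1)}{2(p+1)}\norm{\varphi}_{L^{p+1}}^{p+1}>0$ and $s=\tfrac{d(p-1)}{2}>2$; in particular $\K(\varphi)=A-B$ and $\lambda_0=(A/B)^{1/(s-2)}$.

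The core is a case distinction on the size of $\lambda_0$, with threshold $\Lambda:=(s/2)^{1/(s-2)}>1$. If $\lambda_0\ge\Lambda$, then $B=A\lambda_0^{2-s}\le A\Lambda^{2-s}=\tfrac{2}{s}A=\tfrac{4}{d(p-1)}A$, so $\K(\varphi)=A-B\ge\bigl(1-\tfrac{4}{d(p-1)}\bigr)A=\tfrac{d(p-1)-4}{d(p-1)}A$, which is exactly the first term in the minimum in \eqref{eq2.14}. If instead $1<\lambda_0<\Lambda$, I would use that $\lambda\mapsto\cS_\om(T_\lambda\varphi)$ has derivative $\tfrac1\lambda\K(T_\lambda\varphi)\ge 0$ on $[1,\lambda_0]$, so that
\[
m_\om-\cS_\om(\varphi)\le\cS_\om(T_{\lambda_0}\varphi)-\cS_\om(\varphi)=\int_1^{\lambda_0}\tfrac1\lambda\K(T_\lambda\varphi)\,\mathrm{d}\lambda\le(\lambda_0-1)\Lambda^2 A,
\]
using $0\le\K(T_\lambda\varphi)\le\lambda^2 A\le\Lambda^2 A$ for $\lambda\in[1,\lambda_0]$. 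On the other hand $\K(\varphi)=A(1-\lambda_0^{2-s})=A(\lambda_0-1)g(\lambda_0)$, where $g(\lambda)=(1-\lambda^{2-s})/(\lambda-1)$ extends continuously to $[1,\Lambda]$ with $g(1)=s-2>0$; hence $c_0:=\min_{[1,\Lambda]}g>0$ and $\K(\varphi)\ge c_0 A(\lambda_0-1)$. Combining the two estimates yields $\K(\varphi)\ge\tfrac{c_0}{\Lambda^2}\bigl(m_\om-\cS_\om(\varphi)\bigr)$, the second term in \eqref{eq2.14} with $\delta=c_0/\Lambda^2$.

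This argument is essentially routine; the only point requiring care is the calibration of $\Lambda$ so that the two regimes produce precisely the constants $\tfrac{d(p-1)-4}{d(p-1)}$ and $\delta$ in \eqref{eq2.14}, together with the elementary verification that $g$ is bounded below on $[1,\Lambda]$ (equivalently, that $\K(\varphi)$ controls $(\lambda_0-1)A$ when $\lambda_0$ is bounded). Only $s=d(p-1)/2>2$ enters, so the proof works uniformly over all admissible dimensions and exponents, including the energy-critical case.
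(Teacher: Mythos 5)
Your argument is correct, and up to the case split it follows the paper's skeleton: your threshold $\lambda_0\ge\Lambda$ with $\Lambda=(s/2)^{1/(s-2)}$, $s=\tfrac d2(p-1)$, is exactly the paper's Case I condition $\K(u(t))-\tfrac{(d(p-1)-4)d(p-1)}{8(p+1)}\|u(t)\|_{L^{p+1}}^{p+1}\ge 0$ rewritten as $B\le\tfrac2sA$, and your derivation of the first term in \eqref{eq2.14} is the same algebra. The genuine difference is in the complementary case. The paper there bounds the rescaling parameter $\lambda(t)$ (the zero of $\K(T_\lambda u(t))$) quantitatively via Lemma \ref{le2.8} (the bound $\|u\|_{L^2}^2\le 2m_\om/\om$ on $A_{\om,+}$) combined with interpolation/Sobolev, and then runs a concavity argument for $\lambda\mapsto\cS_\om(T_\lambda u(t))$ on $[1,\lambda(t)]$ (the second-derivative computations \eqref{eq2.21v2}--\eqref{eq2.34v2}) to get $(\lambda(t)-1)\K(u(t))\ge m_\om-\cS_\om(u(t))$. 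You avoid both ingredients: your case condition makes $\lambda_0<\Lambda$ bounded for free, and instead of concavity you use the exact identity $\cS_\om(T_{\lambda_0}\varphi)-\cS_\om(\varphi)=\int_1^{\lambda_0}\lambda^{-1}\K(T_\lambda\varphi)\,\mathrm{d}\lambda$ with the crude bound $\K(T_\lambda\varphi)\le\lambda^2A$, together with the elementary lower bound $\K(\varphi)=A(1-\lambda_0^{2-s})\ge c_0A(\lambda_0-1)$. This buys you a shorter, purely scaling-based proof with an explicit $\delta$ depending only on $d$ and $p$ (not on $\om$, which is stronger than stated and than what the paper's route yields, since its constant involves $m_\om/\om$); the paper's version, while heavier, records along the way the quantitative control of $\lambda(t)$ in terms of the conserved quantities. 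The preliminary flow-invariance claim ($\K(u(t))>0$ on $I_{max}$ for $u_0\in A_{\om,+}\setminus\{0\}$) is argued by you exactly as in the claim inside Proposition \ref{le2.7}, which is also what the paper invokes, so no gap there.
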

\begin{proof}Direct computation shows that
\begin{align*}
\cS_\omega(T_\lambda u(t))&  = \tfrac{\omega}2 \|T_\lambda u(t) \|_{L^2}^2 + \E(T_\lambda u(t)) \\
                         & = \tfrac{\omega}2 \|u(t)\|_{L^2}^2 + \int \tfrac12 \lambda^2 |\nabla u(t)|^2 + \tfrac{d}{2(d+2)}
                          \lambda^2 |u(t)|^\frac{2(d+2)}d - \tfrac1{p+1} \lambda^{\frac{d}2(p-1)} |u(t)|^{p+1} \,\mathrm{d}x
\end{align*}
and
\begin{align*}
\tfrac{\mathrm{d}^2}{\mathrm{d} \lambda^2} \cS_\omega (T_\lambda
u(t))
& = \int |\nabla u(t)|^2 + \tfrac{d}{d+2} |u(t)|^\frac{2(d+2)}d -
\tfrac{d(p-1)(d(p-1)-2)}{4(p+1)} \lambda^{\frac{d}2(p-1)-2}
|u(t)|^{p+1} \,\mathrm{d}x.
\end{align*}
Since
\begin{align*}
\K(T_\lambda u(t))
& = \int \lambda^2 |\nabla u(t)|^2 - \tfrac{d(p-1)}{2(p+1)}
\lambda^{\frac{d}2(p-1)} |u(t)|^{p+1} + \tfrac{d}{d+2} \lambda^2
|u(t)|^\frac{2(d+2)}d \,\mathrm{d}x,
\end{align*}
we have
\begin{align}
& \tfrac{\mathrm{d}^2}{\mathrm{d}\lambda^2} \cS_\omega(T_\lambda u(t))
= - \tfrac1{\lambda^2} \K(T_\lambda u(t)) + \tfrac2{\lambda^2}
\left(\K(T_\lambda u(t)) + \tfrac{(4-d(p-1))d(p-1)}{8(p+1)} \int
|T_\lambda u(t)|^{p+1} \,\mathrm{d}x\right). \label{eq2.21v2}
\end{align}
{\bf Case I.}\begin{equation}\label{eq2.22v2} \K(u(t)) -
\tfrac{(d(p-1)-4)d(p-1)}{8(p+1)} \int |u(t)|^{p+1}
\,\mathrm{d}x \ge 0, \  \text{for}\ t \in I_{max}.
\end{equation}
In this case,
\begin{align*}
\K(u(t)) &  = \int |\nabla u|^2 - \tfrac{d(p-1)}{2(p+1)} |u|^{p+1} + \tfrac{d}{d+2} |u|^\frac{2(d+2)}d \,\mathrm{d}x\\
         & \ge \int |\nabla u|^2 + \tfrac{d}{d+2} |u|^\frac{2(d+2)}d \,\mathrm{d}x - \tfrac4{d(p-1)-4} \K(u(t)),
\end{align*}
thus
\begin{equation*}
\begin{split}
\K(u(t)) & \ge \tfrac{d(p-1)-4}{d(p-1)} \int |\nabla u(t)|^2 + \tfrac{d}{d+2} |u(t)|^\frac{2(d+2)}d \,\mathrm{d}x, \ \forall\,  t\in I_{max}.
\end{split}
\end{equation*}
{\bf Case II.}
\begin{equation} \label{eq2.25v2}
\mathcal{K}(u(t)) - \tfrac{(d(p-1)-4)d(p-1)}{8(p+1)} \int |u(t)|^{p+1}
\,\mathrm{d}x < 0,\ \ \text{for}\ t\in I_{max}.
\end{equation}
In this case,
\begin{equation*}
\|\nabla u(t)\|_{L^2}^2 < \tfrac{d^2 (p-1)^2}{8(p+1)} \int
|u(t)|^{p+1} \,\mathrm{d}x - \tfrac{d}{d+2} \int
|u(t)|^\frac{2(d+2)}d \,\mathrm{d}x,
\end{equation*}
which implies $u(t)\ne 0, \,\forall \,t \in I_{max}$, and
\begin{equation}\label{eq2.26v2}
\tfrac{d^2 (p-1)^2}{8(p+1)} \|u(t)\|_{L^{p+1}}^{p+1} \ge \|\nabla
u(t)\|_{L^2}^2.
\end{equation}
 Since $u_0 \in A_{\omega, +}$ and $u(t)\ne 0, \,\forall \,t \in I_{max}$, together with the definition of $m_\omega$, we have $\mathcal{K}(u(t)) > 0$ by similar argument as in the claim in Proposition \ref{le2.7}. Therefore, by Lemma \ref{le2.1}, there is
 $\lambda(t) > 1$ such that
 \begin{equation} \label{eq2.28v2}
 \K(T_{\lambda(t)} u(t)) = 0
\end{equation}
and
\begin{equation}\label{eq2.30v7}
\K(T_\lambda u(t)) > 0 \text{ for } 1 \le \lambda < \lambda(t).
\end{equation}
By \eqref{eq2.28v2}, we have
\begin{equation*}
\|\nabla u(t)\|_{L^2}^2 - \tfrac{d(p-1)}{2(p+1)}
\lambda(t)^{\frac{d}2(p-1)-2} \|u(t)\|_{L^{p+1}}^{p+1} +
\tfrac{d}{d+2} \|u(t)\|_{L^\frac{2(d+2)}d}^\frac{2(d+2)}d = 0,
\end{equation*}
so by Lemma \ref{le2.8} together with the interpolation and Sobolev inequalities, we have
\begin{align*}
\lambda(t)^{\frac{d}2(p-1)-2} \|u(t)\|_{L^{p+1}}^{p+1}
& =  \tfrac{2(p+1)}{d(p-1)}\left(\|\nabla u(t)\|_{L^2}^2 + \tfrac{d}{d+2}\|u(t)\|_{L^\frac{2(d+2)}d}^\frac{2(d+2)}d\right) \\
& \le \tfrac{2(p+1)}{d(p-1)}\left(\|\nabla u(t)\|_{L^2}^2 + C_d \|u(t)\|_{L^2}^\frac4d \|\nabla u(t)\|_{L^2}^2\right)\\
& \le \tfrac{2(p+1)}{d(p-1)}\left(1+ C_d
\Big(\tfrac{2m_\omega}\omega\Big)^\frac2d  \right)  \|\nabla
u(t)\|_{L^2}^2,
\end{align*}
where $C_d$ is some constant depending on $d$ related to the Sobolev inequality.
Thus, we see by \eqref{eq2.26v2},
\begin{align*}
\tfrac{2(p+1)}{d(p-1)}\left(1+ C_d \Big(\tfrac{2m_\omega}\omega\Big)^\frac2d  \right) \lambda(t)^2\|\nabla u(t)\|_{L^2}^2 &
 \ge \lambda(t)^{\frac{d}2(p-1)} \|u(t)\|_{L^{p+1}}^{p+1}\\
                                       &  \ge \tfrac{8(p+1)}{d^2(p-1)^2} \lambda(t)^{\frac{d}2(p-1)}\|\nabla u(t)\|_{L^2}^2,
\end{align*}
which yields
\begin{align}\label{eq2.29v2}
\lambda(t) \le \left(\tfrac{d}4(p-1)\bigg(1 + C_d
\Big(\tfrac{2m_\omega}\omega\Big)^\frac2d
\bigg)\right)^\frac1{\frac{d}2(p-1)-2}.
\end{align}
Because
\begin{align*}
& \tfrac{\mathrm{d}}{\mathrm{d}\lambda}\left(\tfrac1{\lambda^2} \K(T_\lambda u)\right) = - \tfrac{d(p-1)}{2(p+1)} \left( \tfrac{d}2(p-1) -2\right) \lambda^{\frac{d}2(p-1)-3} \int |u|^{p+1} \,\mathrm{d}x \le 0,\\
& \tfrac{\mathrm{d}}{\mathrm{d}\lambda}\left( \tfrac1{\lambda^2}
\int |T_\lambda u|^{p+1} \,\mathrm{d}x\right)   =
\left(\tfrac{d}2(p-1) -2\right) \int |u|^{p+1} \,\mathrm{d}x \ge 0,
\end{align*}
we have
\begin{equation} \label{eq2.30v2}
\tfrac{\mathrm{d}}{\mathrm{d}\lambda} \left( \tfrac1{\lambda^2}
\Big( \K(T_\lambda u) + \tfrac{(4-d(p-1))d(p-1)}{8(p+1)} \int
|T_\lambda u|^{p+1} \,\mathrm{d}x\Big)\right) \le 0.
\end{equation}
Collecting \eqref{eq2.25v2} and \eqref{eq2.30v2}, we have
\begin{align}
 \tfrac1{\lambda^2}(\K(T_\lambda u(t)) + \tfrac{(4-d(p-1))d(p-1)}{8(p+1)} \int |T_\lambda u(t)|^{p+1}
  \,\mathrm{d}x) < 0, \text{  for } \lambda \ge 1. \label{eq2.31v2}
\end{align}
Hence, \eqref{eq2.21v2}, \eqref{eq2.30v7} and \eqref{eq2.31v2} shows for $1 \le \lambda \le \lambda(t)$,
\begin{align}
  & \tfrac{\mathrm{d}^2}{\mathrm{d}\lambda^2} \cS_\omega(T_\lambda u(t))\nonumber\\
 = &- \tfrac1{\lambda^2} \K(T_\lambda u(t)) + \tfrac2{\lambda^2}\left( \K(T_\lambda u(t)) + \tfrac{(4-d(p-1))d(p-1)}{8(p+1)} \int |T_\lambda u(t)|^{p+1} \,\mathrm{d}x\right)\nonumber \\
 <  & - \tfrac1{\lambda^2} \K(T_\lambda u(t)) \le 0.\label{eq2.34v2}
\end{align}
Combining \eqref{eq2.29v2} and \eqref{eq2.34v2}, we obtain
\begin{align*}
& \left(\left(\tfrac{d}4(p-1)\bigg(1 +  C_d \Big(\tfrac{2m_\omega}\omega\Big)^\frac2d\bigg)\right)^\frac1{\frac{d}2(p-1)-2}-1\right) \K(u(t)) \nonumber\\
          & \ge  (\lambda(t)-1)\tfrac{\mathrm{d}}{\mathrm{d}\lambda}\big|_{\lambda =1} \cS_\omega(T_\lambda u(t))\nonumber \\
          & \ge \cS_\omega(T_{\lambda(t)} u(t)) - \cS_\omega(u(t))\nonumber\\
         &  \ge m_\omega - \cS_\omega(u(t)). \nonumber
\end{align*}
Thus, there is $\delta > 0$ depending on $d, \, p$ and $\omega$ that
\begin{equation*}
\K(u(t)) \ge \min\left\{\tfrac{d(p-1)-4}{d(p-1)}\Big( \norm{\na
u(t)}_{L^2}^2 + \tfrac{d}{d+2}
\norm{u(t)}_{L^\frac{2(d+2)}d}^\frac{2(d+2)}d\Big), \delta
\Big(m_\om - \cS_\om(u(t))\Big)\right\}.
\end{equation*}
\end{proof}

\section{Wellposedness and perturbation theory}
  In this section, we present the local wellposedness theory and the perturbation theory for \eqref{eq1.1}. We start by recording the
wellposedness theory. For the proof we refer to \cite{Cazenave, Cazenave-Weissler, Killip-Visan1}.
\begin{prop}
\label{th4.1}
\begin{enumerate}
\item[{\it (i)}] (Local existence) Let $\phi \in H^{1}(\mathbb{R}^d)$, $I$ be an interval, $t_0\in I$ and $A>0$. Assume that
\begin{equation*}
\left\|\phi \right\|_{H^1}\le A,
\end{equation*}
and there exists $\delta>0$ depending on $A$ that
\begin{equation*}
\left\|\jb{\nabla} e^{i(t-t_0)\Delta}\phi
\right\|_{L_{t,x}^\frac{2(d+2)}d(I\times \R^d)} \le \delta,
\end{equation*}
then there exists a unique solution $u \in C(I,H^1(\mathbb{R}^{d}))$ to \eqref{eq1.1} such that
\begin{align*}
u(t_0)=& \phi,\\
\left\| u \right\|_{S^1(I)} \lesssim & \left\| \phi \right\|_{H^1},\\
\left\|  \jb{\nabla} u \right\|_{L_{t,x}^\frac{2(d+2)}d(I\times \R^d)} \le& 2 \left\| \jb{\nabla} e^{i(t-t_0)\Delta} \phi\right\|_{L_{t,x}^\frac{2(d+2)}d(I\times \R^d)}.
\end{align*}
As a consequence, we have the small data global existence:
if $\norm{\phi}_{H^1}$ is sufficiently small, then $u$ is a global solution with $\norm{u}_{S^1(\R)} \lesssim \norm{\phi}_{H^1}$.

\item[{\it (ii)}] (Unconditional uniqueness) Suppose $u_1, u_2 \in C(I,H^1(\mathbb{R}^d))$ are two solutions of
\eqref{eq1.1} with $u_{1}(t_0)= u_2(t_0)$ for some $t_{0} \in I$, then $u_1= u_2$.

Let $u \in C((T_{min}, T_{max}), H^1(\mathbb{R}^d))$ be the maximal-lifespan solution to \eqref{eq1.1}, then we have
\item[{\it (iii)}] (Conservation laws)
For any $t, t_0 \in I_{\max}$,
\begin{align*}
\mathcal{M}(u(t)) & = \mathcal{M}(u(t_0)),\\
\mathcal{E}(u(t))& =\mathcal{E}(u(t_0)),\\
\mathcal{S}_{\omega}(u(t))
&=\mathcal{S}_{\omega}(u(t_0)),\quad\mbox{for any $\omega > 0$},\\
\mathcal{P}(u(t))& = \mathrm{\Im} \int_{\mathbb{R}^{d}}\nabla u(t,x )\overline{u(t,x)}\,dx =\mathcal{P}(u(t_0)).
\end{align*}

\item[{\it (iv)}] (Blow-up criterion)
If $T_{\max}  < \infty$, then
\begin{equation*}
\left\|u \right\|_{
L_{t,x}^\frac{2(d+2)}d \cap  L_{t,x}^\frac{(d+2)(p-1)}2([T,\, T_{\max})\times \R^d)}= \infty,
\ \forall\, T_{min} < T < T_{max}.
\end{equation*}
A similar result holds, if $T_{\min}   > -\infty$.
\item[{\it (v)}](Scattering) If
\begin{equation}
\label{eq4.11}
\left\|u \right\|_{L_{t,x}^\frac{2(d+2)}d \cap  L_{t,x}^\frac{(d+2)(p-1)}2((T_{min}, T_{max}) \times \R^d)} <\infty,
\end{equation}
then $T_{\max}= \infty$, $T_{\min}= - \infty$,
 and there exist $u_{\pm}\in H^{1}(\mathbb{R}^d)$ such that
\begin{equation}\label{eq4.12}
\lim_{t\to \infty}
\left\|u(t)-e^{it\Delta}u_+ \right\|_{H^1}=\lim_{t\to -\infty}\left\|u(t) -e^{it\Delta}u_- \right\|_{H^1}=0.
\end{equation}
\end{enumerate}
\end{prop}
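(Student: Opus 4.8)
The plan is to establish the five parts by the standard Cazenave--Weissler contraction argument adapted to the combined nonlinearity $F(u) = |u|^{4/d}u - |u|^{p-1}u$, treating the mass-critical term and the energy-subcritical (or energy-critical) term separately in the Strichartz estimates of Lemma \ref{le2.1}. For part (i), I would set up the contraction mapping on the ball
\[
B = \Big\{ u : \ \|u\|_{S^1(I)} \le 2C\|\phi\|_{H^1}, \ \|\jb{\nabla}u\|_{L_{t,x}^{2(d+2)/d}(I\times\R^d)} \le 2\delta \Big\}
\]
with the metric from $L_{t,x}^{2(d+2)/d}$, using the map $\Phi(u)(t) = e^{i(t-t_0)\Delta}\phi - i\int_{t_0}^t e^{i(t-s)\Delta}F(u)(s)\,ds$. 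The key nonlinear estimates are the fractional chain/product rule bounds
\[
\big\|\jb{\nabla}\big(|u|^{4/d}u\big)\big\|_{L_{t,x}^{2(d+2)/(d+4)}} \lesssim \|u\|_{L_{t,x}^{2(d+2)/d}}^{4/d}\,\|\jb{\nabla}u\|_{L_{t,x}^{2(d+2)/d}},
\]
and the analogous estimate for $|u|^{p-1}u$ placed in a dual admissible space (using $\|\jb{\nabla}u\|_{S^0}\le\|\phi\|_{H^1}$ to absorb the extra powers when $p$ is energy-subcritical, or the same critical-type estimate when $p = 1+\frac4{d-2}$); these are exactly the estimates recorded in \cite{Cazenave, Cazenave-Weissler, Killip-Visan1}. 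Combining with Lemma \ref{le2.1} and choosing $\delta$ small depending on $A$ gives the self-map and contraction properties, hence the unique fixed point with the stated bounds; the small-data global statement follows since $\|\jb{\nabla}e^{it\Delta}\phi\|_{L_{t,x}^{2(d+2)/d}(\R\times\R^d)}\lesssim\|\phi\|_{H^1}$.

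Part (ii) (unconditional uniqueness) I would obtain by a Gronwall argument on short subintervals: given two $C(I,H^1)$ solutions agreeing at $t_0$, subtract the Duhamel formulas and estimate $\|u_1-u_2\|$ in a suitable Strichartz space on a subinterval small enough that the nonlinear difference is absorbed, then iterate to cover all of $I$. Parts (iii) are immediate from the equation and the local theory: the conservation of $\M$, $\E$, $\cS_\omega$, and $\P$ for $H^1$ solutions follows by approximation/mollification in the usual way. For part (iv), the blow-up criterion, I would argue contrapositively: if the scattering-type norm $\|u\|_{L_{t,x}^{2(d+2)/d}\cap L_{t,x}^{(d+2)(p-1)/2}}$ is finite on $[T,T_{\max})$, then splitting this interval into finitely many pieces on which this norm is small and running the local theory of part (i) on each piece shows $\|u\|_{S^1([T,T_{\max}))}<\infty$, so $u(t)$ has a limit in $H^1$ as $t\to T_{\max}$ and the solution extends past $T_{\max}$, contradiction.

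Part (v) (scattering) follows once $\|u\|_{S^1(\R)}<\infty$: from Duhamel's formula the wave operators $u_\pm = u_0 - i\int_0^{\pm\infty}e^{-is\Delta}F(u)(s)\,ds$ are well-defined in $H^1$ by the same nonlinear Strichartz estimates (the tails of the time integral go to zero), and $\|u(t)-e^{it\Delta}u_\pm\|_{H^1}\to0$. To deduce $\|u\|_{S^1(\R)}<\infty$ from the hypothesis \eqref{eq4.11} I again partition $\R$ into finitely many intervals on which the controlling norm is small and chain the local bounds. The main obstacle throughout is the bookkeeping in the nonlinear estimates for the \emph{lower-order} mass-critical term $|u|^{4/d}u$: unlike the energy-critical piece it is genuinely $L^2$-critical, so one must be careful to place $\jb{\nabla}(|u|^{4/d}u)$ in the scaling-critical space $L_{t,x}^{2(d+2)/(d+4)}$ (rather than trying to gain from the $H^1$ norm), and the fractional Leibniz rule must be applied with exponents matched precisely so that the $L_{t,x}^{2(d+2)/d}$ smallness — and not the $H^1$ size — drives the contraction; the exponent $\frac{2(d+2)}{d}$ being an integer or not when $d=2$ also forces the minor modification of the $S^0$ norm already flagged in the notation section. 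Since all of these estimates are standard and recorded in the cited references, I would state them as lemmas and refer to \cite{Cazenave, Cazenave-Weissler, Killip-Visan1} rather than reproving them.
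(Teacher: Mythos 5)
Your sketch is the standard Cazenave--Weissler contraction argument with the nonlinearity split into the mass-critical and energy-(sub)critical pieces, which is precisely the approach of the references \cite{Cazenave, Cazenave-Weissler, Killip-Visan1} to which the paper itself defers without giving a proof. So your proposal is correct and essentially coincides with the paper's treatment of Proposition \ref{th4.1}.
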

In the following, we will give the long-time perturbation theory when $d\le 4$.
\begin{prop}[Long-time perturbation]\label{pr4.3}
  Let $I$ be a compact time interval and let $w$ be an approximate solution to \eqref{eq1.1} on $I\times \R^d$ in the sense that
\begin{equation*}
i\partial_t w + \Delta w = |w|^\frac4d w - |w|^{p-1} w + e
\end{equation*}
 for some function $e$.

   Assume that
\begin{align}
\norm{w}_{L_t^\infty H_x^1(I\times \R^d)} \le A_1,\\\label{chuzjias}
\norm{w}_{L_{t,x}^\frac{2(d+2)}d \cap
L_{t,x}^\frac{(d+2)(p-1)}2(I\times \R^d)} \le B
\end{align}
for some $A_1, B > 0$.

   Let $t_0\in I$ and $u(t_0)$ close to $w(t_0)$ in the sense that
\begin{equation}\label{eq1.34v11}
\normo{u(t_0) - w(t_0)}_{H_x^1(\R^d)} \le A_2
\end{equation}
for some $A_2 > 0$.

  Assume also the smallness conditions
\begin{align}
\normo{\jb{\nabla}e^{i(t-t_0)\Delta}(u(t_0)- w(t_0))}_{L_{t,x}^\frac{2(d+2)}d(I\times \R^d)} \le \delta, \label{eq1.35v11} \\
\normo{\jb{\nabla} e}_{L_{t,x}^\frac{2(d+2)}{d+4}(I\times \R^d)} \le \delta
\end{align}
for some $0 < \delta \le \delta_1$, where $\delta_1 = \delta_1(A_1, A_2, B)$ is a small constant. Then there exists a solution $u$ to \eqref{eq1.1} on
$I\times \R^d$ with the specified initial data $u(t_0)$ at time $t = t_0$ that satisfies
\begin{align}
\normo{u - w}_{L_{t,x}^\frac{2(d+2)}d \cap L_{t,x}^\frac{(d+2)(p-1)}2 ( I \times \R^d)} \le& C(A_1, A_2, B) \delta^\alpha,\\
\normo{u - w}_{S^1(I)} \le& C(A_1, A_2, B) A_2,\\
\normo{ u}_{S^1(I)} \le& C(A_1, A_2, B),
\end{align}
where $0 < \alpha < \tfrac4{d(p-1)}$.
  \end{prop}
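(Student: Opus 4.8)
The plan is to run a long-time perturbation (stability) argument of the Tao--Visan--Zhang / Kenig--Merle type, adapted to the two competing nonlinearities. Write $F(z) = |z|^{4/d}z - |z|^{p-1}z$ and set $v := u - w$; the difference should solve $i\partial_t v + \Delta v = F(w+v) - F(w) - e$ with $v(t_0) = u(t_0) - w(t_0)$, so by Duhamel $v(t) = e^{i(t-t_0)\Delta}v(t_0) - i\int_{t_0}^t e^{i(t-s)\Delta}\big(F(w+v)-F(w)-e\big)(s)\,ds$. Since $I$ is compact and $\|w\|_{L_{t,x}^{2(d+2)/d}\cap L_{t,x}^{(d+2)(p-1)/2}(I\times\R^d)}\le B$, the first step is to split $I$ into $N=N(A_1,A_2,B)$ consecutive subintervals $I_j=[t_j,t_{j+1}]$ on each of which $\|w\|_{L_{t,x}^{2(d+2)/d}\cap L_{t,x}^{(d+2)(p-1)/2}(I_j\times\R^d)}\le\eta$, where $\eta=\eta(A_1,A_2,B)$ is a small parameter to be fixed during the argument, and then to propagate the estimate from one subinterval to the next.

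On a single subinterval $I_j$ the proof is a contraction/continuity argument for the integral equation for $v$ (equivalently, solving \eqref{eq1.1} for $u=w+v$ with data prescribed at $t_j$), controlling $v$ simultaneously in $L_{t,x}^{2(d+2)/d}\cap L_{t,x}^{(d+2)(p-1)/2}(I_j\times\R^d)$ through the $S^0$ Strichartz estimate and in $S^1(I_j)$ through the $S^1$ Strichartz estimate. The source $F(w+v)-F(w)$ is handled by the pointwise bounds $|F(w+v)-F(w)|\lesssim(|w|^{4/d}+|v|^{4/d})|v|+(|w|^{p-1}+|v|^{p-1})|v|$ together with the fractional Leibniz/chain rule applied to $\nabla\big(F(w+v)-F(w)\big)$; this is exactly where $d\le4$ is used, since it forces both exponents $\tfrac4d$ and $p-1$ to be at least $1$ (indeed $p-1>\tfrac4d\ge1$), so that $z\mapsto|z|^{4/d}z$ and $z\mapsto|z|^{p-1}z$ are $C^1$ and the chain rule is legitimate. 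The Hölder pairings are those dictated by the Strichartz exponents: in $L_{t,x}^{2(d+2)/(d+4)}$ one uses $\||w|^{4/d}g\|\lesssim\|w\|_{L_{t,x}^{2(d+2)/d}}^{4/d}\|g\|_{L_{t,x}^{2(d+2)/d}}$ and $\||w|^{p-1}g\|\lesssim\|w\|_{L_{t,x}^{(d+2)(p-1)/2}}^{p-1}\|g\|_{L_{t,x}^{2(d+2)/d}}$ (and likewise with $w$ replaced by $v$), which is precisely why the two norms of $w$ appearing in the hypothesis are the right ones. Taking $\eta$ small absorbs every term carrying a factor of $\|w\|_{L_{t,x}^{2(d+2)/d}\cap L_{t,x}^{(d+2)(p-1)/2}(I_j)}$ into the left-hand side, and, starting the continuity argument from the smallness of $v$ near $t_j$, one obtains $\|v\|_{S^1(I_j)}\lesssim\|v(t_j)\|_{H^1}+\delta$ together with a bound for $\|v\|_{L_{t,x}^{2(d+2)/d}\cap L_{t,x}^{(d+2)(p-1)/2}(I_j)}$ by $\|e^{i(t-t_j)\Delta}v(t_j)\|_{L_{t,x}^{2(d+2)/d}}$ plus a small power of $\delta$ — the $L_{t,x}^{(d+2)(p-1)/2}$ component being recovered by interpolating between $\|v\|_{L_{t,x}^{2(d+2)/d}}$ (small) and $\|\langle\nabla\rangle v\|_{S^0(I_j)}$ (merely bounded), which is what produces the power $\delta^\alpha$ with $\alpha<\tfrac4{d(p-1)}$ in the statement.

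To iterate, observe that $v(t_{j+1})$ is itself given by Duhamel, so $e^{i(t-t_{j+1})\Delta}v(t_{j+1})=e^{i(t-t_0)\Delta}v(t_0)-i\int_{t_0}^{t_{j+1}}e^{i(t-s)\Delta}\big(F(w+v)-F(w)-e\big)(s)\,ds$, and the Strichartz estimate controls the new free datum $\|\langle\nabla\rangle e^{i(t-t_{j+1})\Delta}v(t_{j+1})\|_{L_{t,x}^{2(d+2)/d}(I)}$ by $\|\langle\nabla\rangle e^{i(t-t_0)\Delta}v(t_0)\|_{L_{t,x}^{2(d+2)/d}(I)}$ plus the nonlinear contributions on $I_1\cup\cdots\cup I_j$. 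By induction on $j$ one checks that this quantity, and likewise $\|v(t_j)\|_{H^1}$, stays bounded by $C(j)\le C(N)$ times $\delta$ and $(A_2+\delta)$ respectively, provided $\delta_1$ is chosen small enough at the very end (depending on $A_1,A_2,B$ through $N$ and $\eta$). Summing the subinterval bounds then yields $\|u-w\|_{S^1(I)}\le C(A_1,A_2,B)A_2$, $\|u\|_{S^1(I)}\le\|w\|_{S^1(I)}+\|u-w\|_{S^1(I)}\le C(A_1,A_2,B)$ (using also that $\|w\|_{S^1(I)}$ is bounded by a similar but simpler, unperturbed argument), and $\|u-w\|_{L_{t,x}^{2(d+2)/d}\cap L_{t,x}^{(d+2)(p-1)/2}(I\times\R^d)}\le C(A_1,A_2,B)\delta^\alpha$; the existence of the genuine solution $u$ on all of $I$ is built into the fixed-point argument on the subintervals.

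The main obstacle will be the delicate treatment of the mass-critical nonlinearity $|u|^{4/d}u$: as the authors emphasize, it cannot be estimated purely at the $\dot H^1$ (Sobolev) level, so one must keep the $L^2$-admissible norm $L_{t,x}^{2(d+2)/d}$ present throughout, and exploit $\tfrac4d\ge1$ for $d\le4$ to justify the pointwise and chain-rule bounds on $\nabla(|u|^{4/d}u-|w|^{4/d}w)$ — which is also the reason the result is restricted to $d\le4$. A secondary but genuinely fiddly point is the bookkeeping of how the constants degrade across the $N(A_1,A_2,B)$ subintervals, and the fact that the threshold $\delta_1$ can only be fixed at the end so that the induction controlling the free evolutions $e^{i(t-t_j)\Delta}v(t_j)$ never fails.
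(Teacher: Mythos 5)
Your overall architecture is the same as the paper's: establish an $S^1$ bound on $w$, subdivide $I$, run a bootstrap/contraction on each subinterval controlling $v=u-w$ simultaneously in the critical norms and in $S^1$, and propagate the data $v(t_j)$ and the free evolutions across subintervals by Duhamel and Strichartz, fixing $\delta_1$ last. However, there is one concrete gap in the single-subinterval step: you subdivide only so that $\norm{w}_{L_{t,x}^{2(d+2)/d}\cap L_{t,x}^{(d+2)(p-1)/2}(I_j\times\R^d)}\le\eta$, and your absorption claim covers only the terms carrying those (underivatived) norms of $w$. But when you estimate $\jb{\nabla}\big(F(w+v)-F(w)\big)$ in $L_{t,x}^{2(d+2)/(d+4)}$, the chain rule also produces terms where the derivative lands on $w$, schematically $|\nabla w|\,|v|^{4/d}$, $|\nabla w|\,|v|\,|w|^{4/d-1}$, $|\nabla w|\,|v|^{p-1}$, $|\nabla w|\,|v|\,|w|^{p-2}$, whose H\"older estimate carries the coefficient $\normo{\nabla w}_{L_{t,x}^{2(d+2)/d}(I_j)}$. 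Under your subdivision this coefficient is only bounded (by the global $S^1$ bound on $w$), not small, so it cannot be absorbed by taking $\eta$ small. For $d\le 3$ you can still close because each such term carries either a power of $v$ strictly larger than one or a positive power of $|w|$ (hence a factor $\eta^{4/d-1}$ or $\eta^{p-2}$); but at $d=4$ the mass-critical contribution is exactly $|\nabla w|\,|v|$, linear in $v$ with a large coefficient $C(A_1,B)$, and the continuity argument on $I_j$ does not close. Since $d=4$ is one of the main cases of the theorem, this is a genuine failure of the scheme as written.

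The fix is exactly the paper's two-stage subdivision, and you already have the ingredient for it: after proving $\norm{w}_{S^1(I)}\le C(A_1,B,\delta_0)$ by the "simpler, unperturbed" argument you mention, one must subdivide $I$ a second time so that $\normo{\jb{\nabla} w}_{L_{t,x}^{2(d+2)/d}(J_k\times\R^d)}+\normo{w}_{L_{t,x}^{(d+2)(p-1)/2}(J_k\times\R^d)}\le\delta_0$ on each piece (so the number of subintervals depends on the $S^1$ bound, not just on $B$). With the gradient norm of $w$ itself small on each subinterval, all the terms above acquire a small prefactor and the bootstrap closes for all $d\le 4$; the rest of your induction on the subintervals, including the interpolation producing the exponent $\alpha<\tfrac{4}{d(p-1)}$, then goes through as in the paper.
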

To show the long-time perturbation theory, we will first give the following short-time perturbation theory.
\begin{lem}[Short-time perturbation]\label{le3.3v11}
  Let $I$ be a compact time interval and let $w$ be an approximate solution to \eqref{eq1.1} on $I \times \R^d$ in the sense that
\begin{equation*}
i\partial_t w + \Delta w = |w|^\frac4d w - |w|^{p-1} w + e
\end{equation*}
for some function $e$.

   Suppose we also have the energy bound
\begin{equation}\label{equ3.11}
\normo{w}_{L_t^\infty H_x^1(I \times \R^d)} \le A_1
\end{equation}
for some constant $A_1 > 0$.

   Let $t_0 \in I$ and let $u(t_0) \in H^1(\R^d)$ be close to $w(t_0)$ in the sense that
\begin{equation}
\normo{u(t_0)- w(t_0)}_{H_x^1} \le A_2
\end{equation}
for some $A_2 > 0$.

  Moreover, assume the smallness conditions
\begin{align}\label{chuzsm}
\normo{\jb{\nabla} w}_{L_{t,x}^\frac{2(d+2)}d(I\times \R^d)} + \normo{w}_{   L_{t,x}^\frac{(d+2)(p-1)}2(I \times \R^d)} \le \delta_0,\\
\normo{\jb{\nabla} e^{i(t-t_0)\Delta} ( u(t_0) - w(t_0))}_{L_{t,x}^\frac{2(d+2)}d(I \times \R^d)} \le \delta,\\
\normo{\jb{\nabla} e}_{L_{t,x}^\frac{2(d+2)}{d+4}(I\times \R^d)} \le \delta
\end{align}
for some $0 < \delta \le \delta_0$, where $\delta_0  = \delta_0(A_1, A_2) > 0$ is a small constant.

  Then, there exists a solution $u \in S^1(I)$ to \eqref{eq1.1} on $I\times \R^d$ with the specified initial data $u(t_0)$ at time $t = t_0$ that
satisfies
\begin{align}
\normo{u-w}_{L_{t,x}^\frac{2(d+2)}d \cap L_{t,x}^\frac{(d+2)(p-1)}2(I \times \R^d)} \lesssim \delta^\alpha, \label{eq1.28v11}\\
\normo{u-w}_{S^1(I)} \lesssim A_2 + \delta^\alpha, \label{eq1.29v11}\\
\normo{u}_{S^1(I)} \lesssim A_1 + A_2, \label{eq1.30v11}\\
\normo{\jb{\nabla}((i\partial_t + \Delta)(u-w) + e)}_{L_{t,x}^\frac{2(d+2)}{d+4}(I\times \R^d)} \lesssim \delta^\alpha, \label{eq1.31v11}
\end{align}
where $0 < \alpha < \frac4{d(p-1)}$.
\end{lem}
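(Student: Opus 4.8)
\emph{Proof sketch.} Set $v = u - w$ and write $F(z) = |z|^{\frac4d}z - |z|^{p-1}z$, so that $v$ formally solves
\[
i\p_t v + \De v = \big(F(w+v) - F(w)\big) - e, \qquad v(t_0) = u(t_0) - w(t_0).
\]
The plan is to run a Strichartz fixed-point/continuity argument for $v$, measuring it simultaneously through the full Strichartz norm $\norm{v}_{S^1(I)}$ and through the ``scattering'' quantity
\[
N(I) = \normo{\jb{\nabla} v}_{L_{t,x}^{\frac{2(d+2)}d}(I\times\R^d)} + \normo{v}_{L_{t,x}^{\frac{(d+2)(p-1)}2}(I\times\R^d)},
\]
whose two components are adapted to the mass-critical and to the energy-critical/subcritical nonlinearities, respectively. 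Because the present hypothesis \eqref{chuzsm} makes $w$ small on all of $I$, no subdivision of $I$ is needed here (that subdivision is what distinguishes the long-time Proposition \ref{pr4.3}).

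First I would apply Strichartz estimates to the Duhamel formula for $v$ to get
\[
N(I) \les \de + \normo{\jb{\nabla}\big(F(w+v) - F(w)\big)}_{L_{t,x}^{\frac{2(d+2)}{d+4}}(I\times\R^d)},
\]
the $\de$ coming from the smallness of $\jb{\nabla}e^{i(t-t_0)\De}v(t_0)$ and of $\jb{\nabla} e$. Next I would estimate the nonlinear difference by splitting $F$ into its mass-critical and energy-critical/subcritical parts and using the fractional Leibniz and chain rules together with H\"older. The key structural point is that $d\le 4$ forces $p > 1 + \tfrac4d \ge 2$ and $\tfrac4d \ge 1$, so both $z\mapsto|z|^{\frac4d}z$ and $z\mapsto|z|^{p-1}z$ have (locally) Lipschitz derivatives; hence pointwise bounds of the type $|F'(w+v)-F'(w)| \les (|w|+|v|)^{\frac4d-1}|v| + (|w|+|v|)^{p-2}|v|$ are available, and after distributing $\jb{\nabla}$, using $\norm{w}_{L^\I_t H^1_x} \le A_1$, the smallness \eqref{chuzsm}, and Sobolev embedding to absorb the low-regularity factors, one obtains an inequality of the schematic form
\[
N(I) \les \de + \big(\de_0^{\theta} + N(I)^{\theta_1} + N(I)^{\theta_2}\big)\, N(I)
\]
with positive exponents $\theta,\theta_1,\theta_2$. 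Combined with the local solution furnished by Proposition \ref{th4.1} (which exists near $t_0$ and has $N$ small there), a standard continuity/bootstrap argument over the maximal subinterval of $I$ containing $t_0$ on which $N$ is finite then gives $N(I)\les\de^{\al}$ for any $0<\al<\tfrac4{d(p-1)}$, provided $\de_0 = \de_0(A_1,A_2)$ is small; this is \eqref{eq1.28v11}.

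With $N(I)\les\de^\al$ in hand, feeding this back into the Strichartz estimate while keeping the full $S^1$ norm on the left and using $\norm{u(t_0)-w(t_0)}_{H^1}\le A_2$ yields \eqref{eq1.29v11}; then $\norm{u}_{S^1(I)} \le \norm{w}_{S^1(I)} + \norm{v}_{S^1(I)}$ together with $\norm{w}_{S^1(I)} \les A_1$ (obtained by applying Strichartz to $w$ itself via its equation, the energy bound \eqref{equ3.11}, the smallness \eqref{chuzsm}, and the smallness of $\jb{\nabla}e$) gives \eqref{eq1.30v11}. Finally \eqref{eq1.31v11} follows by directly estimating $\jb{\nabla}\big(F(w+v)-F(w)\big)$ in $L_{t,x}^{\frac{2(d+2)}{d+4}}$ exactly as above, now inserting the already-proven bound $N(I)\les\de^\al$.

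The main obstacle is the derivative-level nonlinear estimate: one must handle two nonlinearities of different homogeneity — hence governed by two different admissible Strichartz exponents — at the same time, and in the genuinely fractional cases ($d=3$, or energy-subcritical $p$ not an odd integer) the nonlinearity cannot be differentiated classically, so the fractional chain and Leibniz rules must be used with care that every H\"older exponent remains $L^2$-admissible and that the loss from $\jb{\nabla}$ is compensated by Sobolev embedding. Simultaneously controlling the non-differentiated $L_{t,x}^{\frac{(d+2)(p-1)}2}$-component of $v$ together with the $\jb{\nabla}v$-component is the bookkeeping core of the argument and is what forces the slightly suboptimal exponent $\al<\tfrac4{d(p-1)}$.
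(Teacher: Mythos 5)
Your proposal is correct and follows essentially the same route as the paper: reduce to an a priori estimate for an already-existing $S^1$ solution, bootstrap a smallness quantity for $v=u-w$ via Strichartz applied to its Duhamel formula (the paper's bootstrap variable is the dual-norm quantity $S(T)=\normo{\jb{\nabla}((i\partial_t+\Delta)v+e)}_{L_{t,x}^{2(d+2)/(d+4)}}$ rather than your $N(I)$, which is an equivalent bookkeeping choice), obtain the $\delta^{\alpha}$ gain by interpolating the free-evolution smallness $\delta$ against $A_2$, and then deduce \eqref{eq1.28v11}--\eqref{eq1.31v11} after first proving $\normo{w}_{S^1(I)}\lesssim A_1$ by Strichartz and continuity. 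The only inessential difference is that no fractional chain rule is needed: since $\jb{\nabla}$ amounts to controlling $v$ and $\nabla v$, the paper simply uses classical pointwise bounds on $\nabla\big(F(w+v)-F(w)\big)$, exploiting $p\ge 2$ and $\tfrac4d\ge 1$ for $d\le 4$ exactly as you note.
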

\begin{proof}
By the wellposedness theory, it suffices to prove \eqref{eq1.28v11}-\eqref{eq1.31v11} as a priori estimate, that is, we assume that the solution $u$ already exists and belongs to $S^1(I)$.
  By time symmetry, we may assume $t_0 = \inf I$.

  Let $v = u - w$, then $v$ satisfies
\begin{equation*}
i\partial_t v + \Delta v = | w + v|^\frac4d ( w + v) - |w + v|^{p-1}(w + v) - |w|^\frac4d w + |w|^{p-1} w - e,
\end{equation*}
 and $v(t_0) = u(t_0) - w(t_0)$.

  For $T \in I$, define
\begin{equation*}
S(T) = \normo{\jb{\nabla}((i\partial_t + \Delta) v + e)}_{L_{t,x}^\frac{2(d+2)}{d+4}([t_0, T]\times \R^d)}.
\end{equation*}
 We will now work entirely on the slab $[t_0, T] \times \R^d$.
\begin{align}\nonumber
&\big\|\langle\nabla\rangle
v\big\|_{L_{t,x}^\frac{2(d+2)}d}+\norm{v}_{
L_{t,x}^\frac{(d+2)(p-1)}2}
\\\nonumber  \le& \big\|\langle\nabla\rangle e^{i(t-t_0)\Delta} v(t_0)\big\|_{L_{t,x}^\frac{2(d+2)}d}+ \normo{e^{i(t-t_0)\Delta} v(t_0)}_{L_{t,x}^\frac{(d+2)(p-1)}2}  + \normo{\jb{\nabla} e}_{L_{t,x}^\frac{2(d+2)}{d+4}}
\\\nonumber &+ \normo{\jb{\nabla}((i\partial_t + \Delta) v +
e)}_{L_{t,x}^\frac{2(d+2)}{d+4}}\\\label{vtgj}
 \lesssim& S(T) + \delta^\alpha, \ (0 < \alpha < \tfrac4{d(p-1)}< 1)
\end{align}
where we use the fact
\begin{align*}
 & \normo{e^{i(t-t_0)\Delta} v(t_0)}_{L_{t,x}^\frac{(d+2)(p-1)}2}\\
\lesssim & \normo{\jb{\nabla} e^{i(t-t_0)\Delta}v(t_0)}_{L_t^\frac{(d+2)(p-1)}2 L_x^\frac{2d(d+2)(p-1)}{d(d+2)(p-1)- 8}}\\
\lesssim  & \normo{\jb{\nabla} e^{i(t-t_0)\Delta}
v(t_0)}_{L_{t,x}^\frac{2(d+2)}d}^\frac4{d(p-1)} \normo{\jb{\nabla}
e^{i(t-t_0) \Delta} v(t_0)}_{L_t^\infty L_x^2}^{1 - \frac4{d(p-1)}}
\lesssim \delta^\frac4{d(p-1)} A_2^{1 - \frac4{d(p-1)}} \le
\delta^\alpha.
\end{align*}
  On the other hand, since
\begin{equation*}
\begin{split}
|\nabla((i\partial_t + \Delta) v + e)| \lesssim& |\nabla w|
|v|^\frac4d + |\nabla v||w + v|^\frac4d + |\nabla w||v||w|^{\frac4d
-1} + |\nabla w||v|^{p-1}\\ &+ |\nabla v||w + v|^{p-1} + |\nabla
w||v||w|^{p-2}
\end{split}
\end{equation*}
and
\begin{equation*}
 |(i\partial_t + \Delta) v + e| \lesssim |w||v|^\frac4d + |w+v|^\frac4d |v| + |w||v|^{p-1} + |w+v|^{p-1} |v|,
\end{equation*}
we have
\begin{align*}
S(T)   \lesssim& \normo{\jb{\nabla} w}_{L_{t,x}^\frac{2(d+2)}d}
\normo{(v,w)}_{L_{t,x}^\frac{2(d+2)}d}^\frac4d+ \normo{\jb{\nabla}
v}_{L_{t,x}^\frac{2(d+2)}d}
\normo{(v,w)}_{L_{t,x}^\frac{2(d+2)}d}^\frac4d\\
& + \normo{\jb{\nabla} w}_{L_{t,x}^\frac{2(d+2)}d}
\normo{(v,w)}_{L_{t,x}^\frac{(d+2)(p-1)}2}^{p-1} +
\normo{\jb{\nabla} v}_{L_{t,x}^\frac{2(d+2)}d}
\normo{(v,w)}_{L_{t,x}^\frac{(d+2)(p-1)}2}^{p-1}
\\
 \lesssim& \delta_0 \big[(S(T) + \delta^\alpha)^\frac4d
+\delta_0^\frac4d + (S(T) + \delta^\alpha)^{p-1}+\delta_0^{p-1}\big]\\
& + (\delta_0^\frac4d + \delta_0^{p-1})(S(T) + \delta^\alpha) +
(S(T) + \delta^\alpha)^{1+ \frac4d} + (S(T) + \delta^\alpha)^p.
\end{align*}
By the continuity argument, we can take $\delta_0 = \delta_0(A_1,A_2)$ sufficiently small, then
\begin{equation}\label{stgj}
S(T) \lesssim \delta^\alpha, \ \forall\, T\in I,
\end{equation}
which implies \eqref{eq1.31v11}.
  We also have
\begin{align*}
& \normo{u-w}_{L_{t,x}^\frac{2(d+2)}d \cap L_{t,x}^\frac{(d+2)(p-1)}2} \\
\lesssim &  \normo{e^{i(t-t_0)\Delta} v(t_0)}_{L_{t,x}^\frac{2(d+2)}d \cap L_{t,x}^\frac{(d+2)(p-1)}2}
+ \normo{\jb{\nabla}((i\partial_t + \Delta) v +e)}_{L_{t,x}^\frac{2(d+2)}{d+4}} + \normo{\jb{\nabla} e}_{L_{t,x}^\frac{2(d+2)}{d+4}}
\lesssim \delta^\alpha,
\end{align*}
which is \eqref{eq1.28v11}.
  To obtain \eqref{eq1.29v11}, we see
\begin{align*}
\normo{u-w}_{S^1(I)} & \lesssim \normo{u(t_0) - w(t_0)}_{H^1} + \normo{\jb{\nabla}((i\partial_t + \Delta) v + e)}_{L_{t,x}^\frac{2(d+2)}{d+4}}
+ \normo{\jb{\nabla} e}_{L_{t,x}^\frac{2(d+2)}{d+4}}\\
& \lesssim A_2 + S(t) + \delta
\lesssim A_2 + \delta^\alpha.
\end{align*}

We now show \eqref{eq1.30v11}. Using Strichartz estimate,
\eqref{equ3.11} and \eqref{chuzsm}, we get
\begin{align*}
\normo{w}_{S^1(I)} & \lesssim \normo{w(t_0)}_{H^1} +
 \normo{\jb{\nabla}(|w|^\frac4d w - |w|^{p-1}w)}_{L_{t,x}^\frac{2(d+2)}{d+4}} + \normo{\jb{\nabla} e}_{L_{t,x}^\frac{2(d+2)}{d+4}(I \times \R^d)}\\
 & \lesssim A_1 + \normo{w}_{L_{t,x}^\frac{2(d+2)}d}^\frac4d \normo{\jb{\nabla} w}_{L_{t,x}^\frac{2(d+2)}d}
+ \normo{w}_{L_{t,x}^\frac{(d+2)(p-1)}2}^{p-1} \normo{\jb{\nabla} w}_{L_{t,x}^\frac{2(d+2)}d} + \delta\\
& \lesssim A_1 + (\delta_0^\frac4d +
\delta_0^{p-1})\normo{w}_{S^1(I)} + \delta.
\end{align*}
By the continuity argument, we have
\begin{equation*}
\normo{w}_{S^1(I)} \lesssim A_1,
\end{equation*}
provided $\delta_0$ is sufficiently small depending on $A_1$. This
together with \eqref{vtgj}, \eqref{stgj} and $u=v+w$ yields
\begin{align*}
 \normo{\jb{\nabla} u}_{L_{t,x}^\frac{2(d+2)}d}
  \le  \normo{\jb{\nabla}
v}_{L_{t,x}^\frac{2(d+2)}d} + \normo{\jb{\nabla}
w}_{L_{t,x}^\frac{2(d+2)}d} \lesssim A_1.
\end{align*}
While, we have by \eqref{eq1.28v11}, \eqref{vtgj} and \eqref{stgj}
\begin{align*} \normo{u}_{L_{t,x}^\frac{2(d+2)}d\cap
L_{t,x}^\frac{(d+2)(p-1)}2}\leq\normo{v}_{L_{t,x}^\frac{2(d+2)}d\cap
L_{t,x}^\frac{(d+2)(p-1)}2} + \normo{w}_{L_{t,x}^\frac{2(d+2)}d\cap
L_{t,x}^\frac{(d+2)(p-1)}2}\lesssim\delta^\al
\end{align*} Combining these with Strichartz estimate, we
obtain
\begin{align*}
\normo{u}_{S^1(I)} & \lesssim \normo{u(t_0)}_{H^1} + \normo{\jb{\nabla}(|u|^\frac4du - |u|^{p-1} u)}_{L_{t,x}^\frac{2(d+2)}{d+4}} \\
    & \lesssim A_1 + A_2 + \normo{u}_{L_{t,x}^\frac{2(d+2)}d}^\frac4d \normo{\jb{\nabla} u}_{L_{t,x}^\frac{2(d+2)}d}
   + \normo{u}_{L_{t,x}^\frac{(d+2)(p-1)}2}^{p-1} \normo{\jb{\nabla}u}_{L_{t,x}^\frac{2(d+2)}d}\\
    & \lesssim A_1 + A_2 + \delta^{ \frac4d\alpha}A_1 + \delta^{(p-1)
    \alpha}A_1
   \lesssim A_1 + A_2,
\end{align*}
which proves \eqref{eq1.30v11}, provided $\delta_0$ is sufficiently small depending on $A_1$ and $A_2$.
\end{proof}
We now show the long-time perturbation theory.

{\it Proof of Proposition \ref{pr4.3}}: We will derive Proposition
\ref{pr4.3} from Lemma \ref{le3.3v11} by an iterative procedure.
First, we will assume without loss of generality that $t_0 = \inf
I$. Let $\delta_0 = \delta_0(A_1, 2A_2)$ be as in Lemma
\ref{le3.3v11}.

The first step is to establish an $S^1$ bound on $w$. In order to do so, we subdivide $I$ into $N_0 \sim (1 + \frac{M}{\epsilon_0})^\frac{d}{2(d+2)}$ subintervals
$I_k$ such that
\begin{equation*}
\normo{w}_{L_{t,x}^\frac{2(d+2)}d \cap L_{t,x}^\frac{(d+2)(p-1)}2(I_k \times \R^d)} \sim \delta_0.
\end{equation*}
On each subinterval $I_k$, we have
\begin{align*}
\normo{w}_{S^1(I_k)} & \lesssim \normo{w}_{L_t^\infty H_x^1} + \normo{\jb{\nabla}(|w|^\frac4d w - |w|^{p-1}w)}_{L_{t,x}^\frac{2(d+2)}{d+4}} + \normo{\jb{\nabla} e}_{L_{t,x}^\frac{2(d+2)}{d+4}(I_k \times \R^d)}\\
 & \lesssim A_1 + \normo{w}_{L_{t,x}^\frac{2(d+2)}d}^\frac4d \normo{\jb{\nabla} w}_{L_{t,x}^\frac{2(d+2)}d}
+ \normo{w}_{L_{t,x}^\frac{(d+2)(p-1)}2}^{p-1} \normo{\jb{\nabla} w}_{L_{t,x}^\frac{2(d+2)}d} + \delta\\
& \lesssim A_1 + (\delta_0^\frac4d + \delta_0^{p-1})\normo{w}_{S^1(I_k)} + \delta.
\end{align*}
By the continuity argument, we have
$\normo{w}_{S^1(I_k)} \lesssim A_2$,
provided $\delta_0$ is sufficiently small depending on $A_1$.

Summing these bounds over all the intervals $I_k$, we obtain $\normo{w}_{S^1(I)} \le C(A_1, B, \delta_0)$,
which implies $$\normo{\jb{\nabla} w}_{L_{t,x}^\frac{2(d+2)}d(I \times \R^d)} +  \normo{w}_{  L_{t,x}^\frac{(d+2)(p-1)}2(I \times \R^d)} \le C(A_1, B, \delta_0).$$
This allows us to subdivide $I$ into $N_1 = C(B,
\delta_0)$ subintervals $J_k = [t_k, t_{k+1}]$ such that
$$\normo{\jb{\nabla} w}_{L_{t,x}^\frac{2(d+2)}d(J_k \times \R^d)} +  \normo{w}_{  L_{t,x}^\frac{(d+2)(p-1)}2(J_k \times \R^d)} \le \delta_0.$$
Choosing $\delta_1 = \delta_1(N_1,
A_1, A_2)$ sufficiently small, we apply Lemma \ref{le3.3v11} to
obtain for each $k$, and all $0 < \delta < \delta_1$,
\begin{align*}
\normo{u -w}_{L_{t,x}^\frac{2(d+2)}d \cap L_{t,x}^\frac{(d+2)(p-1)}2 ( J_k\times \R^d)} \le& C(k) \delta^\alpha,\\
\normo{u-w}_{S^1(J_k)} \le& C(k) (A_2 + \delta^\alpha),\\
\normo{u}_{S^1(J_k)} \le& C(k)(A_1 + A_2),\\
\normo{\jb{\nabla}((i\partial_t + \Delta)(u-w) +
e)}_{L_{t,x}^\frac{2(d+2)}{d+4}(J_k \times \R^d)} \le&
C(k)\delta^\alpha,
\end{align*}
provided we can show \eqref{eq1.34v11}, \eqref{eq1.35v11} hold with $t_0$ replaced by $t_k$. We verify this using an inductive argument.
We have
\begin{align*}
 \normo{u(t_{k+1}) - w(t_{k+1})}_{H^1} \lesssim &   \normo{u(t_0)-
w(t_0)}_{H^1} + \normo{\jb{\nabla}
e}_{L_{t,x}^\frac{2(d+2)}{d+4}([t_0, t_{k+1}]\times \R^d)}\\
&+ \normo{\jb{\nabla}((i\partial_t + \Delta)(u-w) + e)}_{L_{t,x}^\frac{2(d+2)}{d+4}([t_0, t_{k+1}]\times \R^d)}\\
\lesssim & A_2 + \delta + \sum\limits_{j = 0}^k C(j)\delta^\alpha,
\end{align*}
and
\begin{align*}
 &\normo{\jb{\nabla}e^{i(t-t_{k+1})\Delta} (u(t_{k+1}) -
 w(t_{k+1}))}_{L_{t,x}^\frac{2(d+2)}d}\\
\lesssim & \normo{\jb{\nabla} e^{i(t-t_0) \Delta}(u(t_0) -
w(t_0))}_{L_{t,x}^\frac{2(d+2)}d(I \times \R^d)}+ \normo{\jb{\nabla} e}_{L_{t,x}^\frac{2(d+2)}{d+4}}\\ & + \normo{\jb{\nabla}((i\partial_t + \Delta)(u-w) + e)}_{L_{t,x}^\frac{2(d+2)}{d+4}}\\
\lesssim & \delta + \sum\limits_{j= 0}^k C(j) \delta^\alpha.
\end{align*}
Here, $C(j)$ depends only on $j, A_1, A_2, \delta_0$.

Choosing $\delta_1$ sufficiently small depending on $N_1, A_1, A_2$, we can continue the inductive argument. This concludes the proof of Proposition \ref{pr4.3}.
\section{Linear Profile decomposition}
The linear profile decomposition was first established by H. Bahouri and P. G\'erard \cite{Bahouri-Gerard} for the energy critical wave equation in $\dot{H}^1(\R^d)$. Later, S. Keraani \cite{Keraani1} proved the linear profile decomposition for the energy critical Schr\"odinger equation in $\dot{H}^1(\R^d)$. At almost the same time, F. Merle and L. Vega \cite{Merle-Vega} gave the linear profile decomposition for the mass critical Schr\"odinger equation in $L^2(\R^d)$, then R. Carles
and S. Keraani \cite{Carles-Keraani} established this in $L^2(\R)$. In \cite{Keraani2}, S. Keraani use the $L^2$ profile decomposition to describe the minimal mass blowup solution.
Later, P. B\'egout and A. Vargas \cite{Begout-Vargas} extend these results to $L^2(\R^d),\  d \ge 3$.
In this section, we will give the linear profile decomposition in $H^1(\R^d)$ by using the linear profile decomposition in $L^2(\R^d)$. We first review the linear profile decomposition in $L^2(\R^d)$ in the following.
\begin{lem}[Profile decomposition in $L^2(\R^d)$, \cite{Begout-Vargas, Carles-Keraani, Merle-Vega}]\label{le8.1v14}
  Let $\{\phi_n\}_{n\ge 1}$ be a bounded sequence in $L^2(\R^d)$. Then up to passing to a subsequence of $\{\phi_n\}_{n\ge 1}$, there exists a sequence of functions
$\phi^j \in L^2(\R^d)$ and $( \theta_n^j, h_n^j, t_n^j, x_n^j, \xi_n^j)_{n\ge 1} \subset  \R/2\pi\mathbb{Z} \times (0,\infty)\times  \R \times \R^d \times \R^d$,
with
 \begin{equation}\label{eq8.4v14}
\tfrac{h_n^m}{h_n^j} + \tfrac{h_n^j}{h_n^m} + \tfrac{|t_n^j -
t_n^m|}{(h_n^j)^2}+ h_n^j | \xi_n^j -\xi_n^m|  +  \left|\tfrac{x_n^j- x_n^m}{h_n^j} +
\tfrac{2(t_n^j \xi_n^j - t_n^m \xi_n^m)}{h_n^j} \right|   \to
\infty, \text{ as }  n\to \infty, \text{  for }  j \ne m,
\end{equation}
where
\begin{align}
& h_n^j \to h_\infty^j\in \{0, 1, \infty\},\  h_n^j = 1 \text{ if }  h_\infty^j = 1, \label{eq8.1v14}\\
& \tau_n^j = - \tfrac{t_n^j}{(h_n^j)^2} \to \tau_\infty^j \in [-\infty, \infty], \text{ as } n\to \infty,\label{eq8.2v14}\\
 & \xi_n^j = 0 \text{  if  }  \quad \underset{ n \to \infty} {\limsup}\,|h_n^j \xi_n^j| < \infty, \label{eq8.3v14}
\end{align}
such that $\forall \,k \ge 1$, there exists $r^k_n\in L^2(\R^d)$,
\begin{equation}
\phi_n(x) = \sum\limits_{j=1}^k  T_n^j \phi^j(x) + r^k_n(x),
\end{equation}
here $T_n^j$ is defined by $T_n^j \phi(x) = e^{i\theta_n^j}
e^{ix\cdot\xi_n^j} e^{-it_n^j \Delta} \Big(\tfrac1{(h_n^j)^\frac{d}2
} \phi \big(\tfrac{\cdot - x_n^j }{h_n^j}\big)\Big)(x)$. The
remainder $r_n^k$ satisfies
\begin{equation}\label{eq8.6v14}
 \underset{ n\to \infty} \limsup \, \norm{e^{it\Delta} r_n^k}_{L_t^q L_x^r(\R\times \R^d)} \to 0, \text{ as }  k\to \infty,
\end{equation}
 where $(q,r)$ is $L^2$-admissible, and $2 < q < \infty \text{ when } d\ge 2,\  4 < q < \infty \text{ when } d = 1$.
 We also have as $n\to \infty$,
\begin{align}
& \ \normo{e^{it\Delta} T_n^j(\phi^j) e^{it\Delta} T_n^m(\phi^m)}_{L_{t,x}^\frac{d+2}d(\R \times \R^d)} \to 0, \  \jb{T_n^j(\phi^j), T_n^m(\phi^m)}_{L^2}\to 0,  \text{ for } j\ne m,\\
 & \text{ and } \forall \,1 \le j\le k, \jb{T_n^j (\phi^j), r_n^k}_{L^2} \to 0, \  (T_n^j)^{-1} r_n^k \rightharpoonup 0 \text{ in } L^2(\R^d).\label{eq8.8v14}
\end{align}
As a consequence, we have the mass decoupling property:
\begin{equation}\label{eq8.9v14}
\forall\, k \ge 1,\
\norm{\phi_n}_{L^2}^2 - \sum\limits_{j=1}^k \norm{\phi^j}_{L^2}^2 - \norm{r_n^k}_{L^2}^2 \to 0.
\end{equation}
\end{lem}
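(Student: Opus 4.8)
The plan is to follow the standard concentration-compactness scheme of Bahouri--G\'erard and Keraani, adapted to the mass-critical setting with the full Galilean symmetry as in B\'egout--Vargas (and, for $d=1$, Carles--Keraani). The engine is a \emph{refined Strichartz inequality} of the schematic form
\[
\norm{e^{it\Delta}f}_{L_{t,x}^{2(d+2)/d}(\R\times\R^d)}\lesssim \norm{f}_{L^2(\R^d)}^{1-\te}\,\mathcal{N}(f)^{\te},\qquad \te\in(0,1),
\]
where $\mathcal{N}(f)$ is a norm adapted to dyadic frequency cubes $Q$ (built from quantities such as $|Q|^{-1/2}\norm{\widehat f\,\mathbf 1_Q}_{L^2}$ or $|Q|^{\sigma}\norm{e^{it\Delta}(P_Qf)}_{L_{t,x}^{\tilde r}}$ with $\tilde r<\tfrac{2(d+2)}d$), which is controlled by $\norm{f}_{L^2}$ but whose largeness forces concentration of the mass in frequency space. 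This is a consequence of Tao's bilinear restriction estimate, together with Bourgain's refinement when $d=2$ and the one-dimensional argument of Carles--Keraani when $d=1$. Concretely, if the free evolution of an $L^2$-normalized sequence does not tend to zero in $L_{t,x}^{2(d+2)/d}$, then after a suitable Galilean boost, parabolic rescaling, space translation, time translation and phase, one can pin down a nonzero weak $L^2$-limit --- a \emph{profile}.

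First I would organize the extraction as an induction. Set $r_n^0=\phi_n$. Given profiles $\phi^1,\dots,\phi^{k-1}$ and their parameter sequences with $r_n^{k-1}=\phi_n-\sum_{j=1}^{k-1}T_n^j\phi^j$ bounded in $L^2(\R^d)$ and $(T_n^j)^{-1}r_n^{k-1}\rightharpoonup 0$ weakly in $L^2$ for $j<k$, put $\e_k=\limsup_{n\to\infty}\norm{e^{it\Delta}r_n^{k-1}}_{L_{t,x}^{2(d+2)/d}}$. If $\e_k=0$ the induction stops; otherwise, applying the refined inequality along a subsequence produces parameters $(\te_n^k,h_n^k,t_n^k,x_n^k,\xi_n^k)$ and $\phi^k\ne 0$ with $(T_n^k)^{-1}r_n^{k-1}\rightharpoonup\phi^k$ and $\norm{\phi^k}_{L^2}\gtrsim\e_k^{1/\te}\bigl(\limsup_n\norm{r_n^{k-1}}_{L^2}\bigr)^{1-1/\te}$, and we set $r_n^k=r_n^{k-1}-T_n^k\phi^k$. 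Since each $T_n^j$ is an $L^2$-isometry intertwining $e^{it\Delta}$ up to the explicit unitary factors, the asymptotic orthogonality \eqref{eq8.4v14} comes from the dichotomy for the compositions $(T_n^j)^{-1}T_n^m$ with $j\ne m$: either some term on the left of \eqref{eq8.4v14} diverges --- what we want --- or these compositions converge strongly to a limiting symmetry, in which case $(T_n^j)^{-1}T_n^m\phi^m$ has a nonzero weak limit, contradicting $(T_n^j)^{-1}r_n^{k-1}\rightharpoonup 0$ because $\phi^m$ was itself the weak limit of $(T_n^m)^{-1}r_n^{m-1}$. Passing to a further subsequence so that each $h_n^j$ converges in $[0,\infty]$ and each $\tau_n^j$ in $[-\infty,\infty]$, and absorbing into $\phi^j$ any convergent nonzero rescaling (respectively any bounded boost $h_n^j\xi_n^j$, respectively any convergent time translation), one arranges $h_\infty^j\in\{0,1,\infty\}$, $\xi_n^j=0$ in the bounded-boost case, and hence \eqref{eq8.1v14}--\eqref{eq8.3v14}.

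The decoupling \eqref{eq8.9v14} is then a Pythagorean expansion of $\norm{\phi_n}_{L^2}^2=\bigl\|\sum_{j=1}^k T_n^j\phi^j+r_n^k\bigr\|_{L^2}^2$: the cross terms $\jb{T_n^j\phi^j,T_n^m\phi^m}_{L^2}=\jb{(T_n^j)^{-1}T_n^m\phi^m,\phi^j}_{L^2}$ tend to $0$ by \eqref{eq8.4v14}, and $\jb{T_n^j\phi^j,r_n^k}_{L^2}\to0$ by the maintained weak-zero property. Thus $\sum_{j\ge1}\norm{\phi^j}_{L^2}^2\le\limsup_{n\to\infty}\norm{\phi_n}_{L^2}^2<\infty$, so $\norm{\phi^k}_{L^2}\to0$ as $k\to\infty$; combined with the lower bound $\norm{\phi^k}_{L^2}\gtrsim\e_k^{1/\te}$ this forces $\e_k\to0$, i.e.\ \eqref{eq8.6v14} at the exponent $2(d+2)/d$. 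Interpolating with the uniform Strichartz bound $\norm{e^{it\Delta}r_n^k}_{L_t^qL_x^r}\lesssim\norm{r_n^k}_{L^2}\lesssim1$ then extends \eqref{eq8.6v14} to every admissible pair in the stated range. The remaining assertions --- the $L_{t,x}^{(d+2)/d}$-smallness of the products $e^{it\Delta}T_n^j\phi^j\cdot e^{it\Delta}T_n^m\phi^m$ for $j\ne m$, and $\jb{T_n^j\phi^j,r_n^k}_{L^2}\to0$, $(T_n^j)^{-1}r_n^k\rightharpoonup0$ --- follow by changing variables to normalize $T_n^j$ to the identity and then invoking, for the product, \eqref{eq8.4v14} (the competing bubble escapes to infinity in frequency, in space, or in scale, so its overlap dies in $L_{t,x}^{(d+2)/d}$), and for the rest the construction itself. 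A diagonal argument over $k$ produces one subsequence valid for all $k$. I expect the genuinely hard input to be the refined Strichartz inequality --- dispersive decay alone is insufficient and one needs bilinear restriction to turn $L_{t,x}^{2(d+2)/d}$ non-smallness into an actual concentrating bubble --- with the orthogonality bookkeeping for $(T_n^j)^{-1}T_n^m$ in the extraction step being the other place where care, rather than routine estimation, is required.
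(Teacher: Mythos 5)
Your proposal is correct in outline, but it takes a different route from the paper: you rebuild the whole $L^2$ profile decomposition from scratch (refined Strichartz via bilinear restriction, inductive extraction of weak limits after renormalizing by the symmetries, the dichotomy for $(T_n^j)^{-1}T_n^m$ giving \eqref{eq8.4v14}, Pythagorean decoupling forcing $\norm{\phi^k}_{L^2}\to 0$ and hence the vanishing of the remainder at the symmetric exponent), whereas the paper simply quotes the decomposition of Merle--Vega, Carles--Keraani and B\'egout--Vargas and devotes its proof only to the add-ons not stated verbatim there: (a) freezing the scales so that $h_\infty^j\in\{0,1,\infty\}$ with $h_n^j\equiv 1$ in the intermediate case, by replacing $\phi^j$ with its limiting rescaling and dumping the strongly-$L^2$-small error into the remainder; (b) setting $\xi_n^j=0$ when $h_n^j\xi_n^j$ stays bounded, via an explicit Galilean conjugation that also shifts $(\theta_n^j,x_n^j)$, checking that \eqref{eq8.4v14}--\eqref{eq8.3v14} survive; (c) upgrading the remainder bound to all admissible pairs in the stated range by interpolating the $L_{t,x}^{2(d+2)/d}$ smallness with Strichartz, which is \eqref{eq8.6v14}; and (d) deducing $(T_n^j)^{-1}r_n^k\rightharpoonup 0$ for all $j\le k$ from the known $(T_n^j)^{-1}r_n^j\rightharpoonup 0$ by the telescoping identity $r_n^k=r_n^j-\sum_{m=j+1}^k T_n^m\phi^m$ and parameter orthogonality, which is \eqref{eq8.8v14}. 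Your absorption arguments for (a), (b) and your ``maintained weak-zero property'' for (d) are in substance the same devices, so nothing essential is missing; the trade-off is that your version is self-contained but imports the genuinely hard bilinear-restriction input (Tao, Bourgain, Carles--Keraani for $d=1$) that the paper deliberately avoids re-proving, while the paper's version is shorter but is honest only as a reduction to the cited literature. If you keep your route, the one place to be more careful than your sketch is exactly the bookkeeping the paper spells out: after the renormalizations in (a)--(b) you must verify that the modified parameters still satisfy \eqref{eq8.4v14} and that the discarded errors are strongly small in $L^2$ (hence harmless for \eqref{eq8.6v14} and \eqref{eq8.9v14}); asserting this in one clause, as you do, is where an examiner would press.
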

\begin{proof}
We only need to show \eqref{eq8.1v14}, \eqref{eq8.3v14}, \eqref{eq8.6v14} and \eqref{eq8.8v14}. Other statements in the theorem are stated in the profile decomposition in $L^2(\R^d)$ proved in \cite{Begout-Vargas, Carles-Keraani, Merle-Vega}. Without loss of generality, we assume that the sequence is up to a subsequence in the following.

To show \eqref{eq8.1v14}, we only need to prove that we may take $h_\infty^j$ and $h_n^j$ to be 1 when $h_\infty^j \in (0, \infty)$. In fact,
if $h_n^j \to h_\infty^j \in (0, \infty),\text{  as }  n\to \infty$, we have
\begin{align*}
& e^{i\theta_n^j} e^{ix\cdot\xi_n^j} e^{-it_n^j \Delta} \bigg(\tfrac1{(h_n^j)^\frac{d}2 } \phi^j \Big(\tfrac{\cdot - x_n^j }{h_n^j}\Big)\bigg)(x)\\
= & \  e^{i\theta_n^j} e^{ix\cdot\xi_n^j} e^{-it_n^j \Delta} \bigg(\tfrac1{(h_\infty^j)^\frac{d}2 } \phi^j \Big(\tfrac{\cdot - x_n^j }{h_\infty^j}\Big)\bigg)(x)\\
+ &\  e^{i\theta_n^j} e^{ix\cdot\xi_n^j} e^{-it_n^j \Delta}
\bigg(\tfrac1{(h_n^j)^\frac{d}2 } \phi^j \Big(\tfrac{\cdot - x_n^j
}{h_n^j}\Big)\bigg)(x)- e^{i\theta_n^j} e^{ix\cdot\xi_n^j}
e^{-it_n^j \Delta} \bigg(\tfrac1{(h_\infty^j)^\frac{d}2 } \phi^j
\Big(\tfrac{\cdot - x_n^j }{h_\infty^j}\Big)\bigg)(x).
\end{align*}
Note that
\begin{align*}
& \normo{e^{i\theta_n^j} e^{ix\cdot\xi_n^j} e^{-it_n^j \Delta} \bigg(\tfrac1{(h_n^j)^\frac{d}2 } \phi^j \Big(\tfrac{\cdot - x_n^j }{h_n^j}\Big)
\bigg)(x)- e^{i\theta_n^j} e^{ix\cdot\xi_n^j} e^{-it_n^j \Delta} \bigg(\tfrac1{(h_\infty^j)^\frac{d}2 } \phi^j \Big(\tfrac{\cdot - x_n^j }{h_\infty^j}\Big)\bigg)(x)}_{L^2}\\
= & \normo{\phi^j(x) -
\left(\tfrac{h_n^j}{h_\infty^j}\right)^{\frac{d}2}
\phi^j\Big(\tfrac{h_n^j}{h_\infty^j} x\Big)}_{L^2}\to 0, \text{ as }
n\to \infty.
\end{align*}
So we can put $e^{i\theta_n^j} e^{ix\cdot\xi_n^j} e^{-it_n^j \Delta}
\Big(\tfrac1{(h_n^j)^\frac{d}2 } \phi^j \big(\tfrac{\cdot - x_n^j
}{h_n^j}\big)\Big)(x)- e^{i\theta_n^j} e^{ix\cdot\xi_n^j} e^{-it_n^j
\Delta} \Big(\tfrac1{(h_\infty^j)^\frac{d}2 } \phi^j
\big(\tfrac{\cdot - x_n^j }{h_\infty^j}\big)\Big)(x)$ into the
remainder term, by the Strichartz estimate. We now shift $\phi^j(x)$ by
$\tfrac1{(h_\infty^j)^\frac{d}2} \phi^j(\tfrac{x}{h_\infty^j})$, and
$(\theta_n^j, h_n^j, t_n^j, x_n^j, \xi_n^j)$ by $(\theta_n^j,1,
t_n^j, x_n^j, \xi_n^j)$. It is easy to see that
\eqref{eq8.4v14}-\eqref{eq8.3v14} are not affected. Thus, we
conclude  \eqref{eq8.1v14}.

We now show \eqref{eq8.3v14}.
If $h_n^j \xi_n^j \to \xi^j\in \R^d, \text{ as }  n\to \infty, \text{ for some }  1 \le j \le k$. By the Galilean transform
\begin{equation*}
e^{it_0 \Delta}( e^{ix\cdot\xi_0} \phi(x)) = e^{i(x\cdot\xi_0 - t_0|\xi_0|^2)} e^{it_0 \Delta} \phi(x- 2t_0 \xi_0),
\end{equation*}
we have
\begin{align*}
& e^{i\theta_n^j} e^{ix\cdot\xi_n^j} e^{-it_n^j \Delta} \bigg(\tfrac1{(h_n^j)^\frac{d}2 } \phi^j \Big(\tfrac{\cdot - x_n^j }{h_n^j}\Big)\bigg)(x)\\
= & \tfrac1{(h_n^j)^\frac{d}2 } e^{i\theta_n^j}  e^{-it_n^j |\xi_n^j|^2}  e^{-it_n^j \Delta} \bigg( e^{ix\cdot\xi_n^j} \phi^j\Big(\tfrac{x-x_n^j - 2t_n^j \xi_n^j}{h_n^j}\Big) \bigg)\\
= & \tfrac1{(h_n^j)^\frac{d}2 } e^{i\theta_n^j}  e^{i(t_n^j |\xi_n^j|^2 + x_n^j\cdot \xi_n^j)}
e^{-it_n^j \Delta} \bigg(\Big(e^{i\langle\cdot,~ h_n^j \xi_n^j\rangle} \phi^j\Big)\Big(\tfrac{\cdot -x_n^j}{h_n^j}\Big) \bigg)(x- 2t_n^j \xi_n^j)\\
= & \tfrac1{(h_n^j)^\frac{d}2 } e^{i(\theta_n^j + t_n^j |\xi_n^j|^2 + x_n^j\cdot \xi_n^j)}
  e^{-it_n^j \Delta} \bigg(\Big(e^{i\langle\cdot,~ \xi^j\rangle} \phi^j\Big)\Big(\tfrac{\cdot -x_n^j}{h_n^j}\Big) \bigg)(x- 2t_n^j \xi_n^j)\\
&+   \tfrac1{(h_n^j)^\frac{d}2 } e^{i(\theta_n^j + t_n^j |\xi_n^j|^2 + x_n^j\cdot \xi_n^j)}
 e^{-it_n^j \Delta} \bigg(\Big(e^{i\langle\cdot,~ h_n^j \xi_n^j\rangle} \phi^j\Big)\Big(\tfrac{\cdot -x_n^j}{h_n^j}\Big) \bigg)(x- 2t_n^j \xi_n^j)\\
&- \tfrac1{(h_n^j)^\frac{d}2 } e^{i(\theta_n^j + t_n^j |\xi_n^j|^2 +
x_n^j\cdot \xi_n^j)}  e^{-it_n^j \Delta}
\bigg(\Big(e^{i\langle\cdot,~ \xi^j\rangle}
\phi^j\Big)\Big(\tfrac{\cdot -x_n^j}{h_n^j}\Big) \bigg)(x- 2t_n^j
\xi_n^j).
\end{align*}
We see
\begin{align*}
& \normo{\tfrac1{(h_n^j)^\frac{d}2 } e^{i(\theta_n^j + t_n^j |\xi_n^j|^2 + x_n^j\cdot \xi_n^j)}
 e^{-it_n^j \Delta} \bigg(\Big(e^{i\langle\cdot,~ h_n^j \xi_n^j\rangle} \phi^j\Big)\Big(\tfrac{\cdot -x_n^j}{h_n^j}\Big) -
  \Big(e^{i\langle\cdot,~ \xi^j\rangle} \phi^j\Big)\Big(\tfrac{\cdot -x_n^j}{h_n^j}\Big)\bigg)(x- 2t_n^j \xi_n^j)}_{L^2}\\
= & \normo{(e^{ix\cdot h_n^j \xi_n^j} - e^{ix\cdot\xi^j}) \phi^j(x)}_{L^2} \to 0, \text{ as } n \to \infty.
\end{align*}
We now replace
$( \theta_n^j, h_n^j, t_n^j, x_n^j, \xi_n^j)$ by $(\theta_n^j + t_n^j |\xi_n^j|^2 + x_n^j\cdot \xi_n^j, h_n^j, t_n^j, x_n^j + 2t_n^j \xi_n^j, 0)$, and $\phi^j(x)$ by $e^{ix\cdot\xi^j} \phi^j(x)$, we can verify that \eqref{eq8.4v14}-\eqref{eq8.3v14} are not affected.
So we can take $\xi_n^j = 0$ when $\underset{n\to \infty} \limsup \, |h_n^j \xi_n^j | < \infty$.

For the profile decomposition in \cite{Begout-Vargas, Carles-Keraani, Merle-Vega}, the remainder $r_n^k$ satisfies
\begin{equation*}
\underset{n\to \infty} \limsup \, \normo{e^{it\Delta} r_n^k}_{L_{t,x}^\frac{2(d+2)}d(\R \times \R^d)} \to 0, \text{ as }  k\to \infty.
 \end{equation*}
By using interpolation, the Strichartz estimate and \eqref{eq8.9v14}, we easily obtain \eqref{eq8.6v14}.

To show \eqref{eq8.8v14}, we see in \cite{Begout-Vargas, Carles-Keraani, Merle-Vega}, we already have
\begin{equation}\label{eq8.10v14}
(T_n^j)^{-1} r_n^j \rightharpoonup 0 \text{ in } L^2(\R^d),\text{ as }  n\to \infty.
\end{equation}
Since
$r_n^k(x) = r_n^j(x) - \sum\limits_{ m = j+ 1}^k T_n^m(\phi^m)(x), \text{ when } 1 \le j < k$,
so by \eqref{eq8.4v14} and \eqref{eq8.10v14},
\begin{equation*}
(T_n^j)^{-1} r_n^k = (T_n^j)^{-1} r_n^j - \sum\limits_{m = j+1}^k (T_n^j)^{-1} T_n^m \phi^m \rightharpoonup 0 \text{ in }  L^2(\R^d), \text{ as }  n\to \infty.
\end{equation*}
\end{proof}
We can now show the linear profile decomposition in $H^1(\R^d)$.
\begin{thm}\label{le5.2}
  Let $\{\varphi_n\}$ be a bounded sequence in $H^1(\R^d)$. Then up to passing to a subsequence of $\{\varphi_n\}$, there exists a sequence of functions $\varphi^j \in H^1(\R^d)$ and $( \theta_n^j, t_n^j, x_n^j)_{n\ge 1} \subset  \R/2\pi\mathbb{Z}  \times \R \times \R^d $, with when $n\to \infty$,
  \begin{align}
  & {|t_n^j - t_n^m|} +  \left|{x_n^j - x_n^m}\right| \to \infty,\  \forall\, j \ne m,    \label{eq2.13v3new}
\end{align}
such that for any $k \in \N$, there exists $w_n^k \in H^1(\R^d)$,
\begin{equation}\label{eq2.18v3new}
\begin{split}
e^{it\De} \varphi_n & = \sum\limits_{j =1 }^k e^{i\theta_n^j} e^{i(t-t_n^j)\De}  \varphi^j(x-x_n^j) + e^{it\De} w_n^k.
\end{split}
\end{equation}
The remainder $w_n^k$ satisfies
\begin{equation}\label{eq2.15v3new}
\underset{n\to \infty}{\limsup}\  \norm{\jb{\na} e^{it\De} w_n^k}_{L_t^q L_x^r(\R \times \R^d)} \to 0, \text{ as }  k \to \infty,
\end{equation}
where $(q,r)$ is $L^2$-admissible, and $2 < q < \infty \text{ when } d\ge 2,\, 4 < q < \infty \text{ when }  d = 1$.
Moreover, we have the following decoupling properties: $\forall \, k\in \N$,
%
\begin{align}
& \big\||\na|^s \varphi_n\big\|_{L^2}^2 - \sum\limits_{j=1}^k \normo{|\na|^s  e^{-i t_n^j\De} \varphi^j }_{L^2}^2 - \big\||\na|^s w_n^k\big\|_{L^2}^2 \to 0,  \ s = 0,1, \label{eq2.21v3new} \\
& \E(\varphi_n) - \sum\limits_{j=1}^k \E(e^{-i t_n^j\De} \varphi^j ) - \E(w_n^k) \to 0, \label{eq2.22v3new}  \\
& \cS_\om(\varphi_n) - \sum\limits_{j=1}^k \cS_\om(e^{-i t_n^j\De} \varphi^j) - \cS_\om ( w_n^k) \to 0, \label{eq2.23v3new}\\
& \K(\varphi_n) - \sum\limits_{j=1}^k \K(e^{-i t_n^j\De} \varphi^j) - \K(w_n^k) \to 0,  \label{eq2.25v3new} \\
& \cH_\om(\varphi_n) - \sum\limits_{j=1}^k \cH_\om(e^{-i t_n^j\De} \varphi^j) - \cH_\om(w_n^k) \to 0, \text{  as } n\to \infty.  \label{eq2.24v3new}
\end{align}
\end{thm}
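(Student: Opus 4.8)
\emph{Proof proposal.} The plan is to deduce the $H^1(\R^d)$ decomposition from the $L^2(\R^d)$ decomposition of Lemma~\ref{le8.1v14}. Since $\{\varphi_n\}$ is bounded in $H^1(\R^d)$, the sequence $\phi_n:=\jb{\na}\varphi_n$ is bounded in $L^2(\R^d)$, so first I would apply Lemma~\ref{le8.1v14} to $\{\phi_n\}$, obtaining $L^2$-profiles $\phi^j$, parameters $(\theta_n^j,h_n^j,t_n^j,x_n^j,\xi_n^j)$, and remainders $r_n^k$ with $\jb{\na}\varphi_n=\sum_{j=1}^k T_n^j\phi^j+r_n^k$. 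Applying $\jb{\na}^{-1}$ and then $e^{it\De}$ (which commute) gives the tentative decomposition $e^{it\De}\varphi_n=\sum_{j=1}^k e^{it\De}\jb{\na}^{-1}T_n^j\phi^j+e^{it\De}\jb{\na}^{-1}r_n^k$, and it remains to recognize the pieces $\jb{\na}^{-1}T_n^j\phi^j$ as genuine $H^1$-profiles.

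The crucial step is to show that the scaling and Galilean parameters degenerate. The basic tool is the conjugation identity: $(T_n^j)^{-1}m(\na)T_n^j$ is the Fourier multiplier with symbol $m(\zeta/h_n^j+\xi_n^j)$, so $\jb{\na}^{-1}T_n^j\phi^j=T_n^j\big[(1+|\na/h_n^j+\xi_n^j|^2)^{-1/2}\phi^j\big]$ with a uniformly bounded multiplier acting on $\phi^j$. From this one reads off, as explained in the introduction, that a profile with $h_n^j\to\infty$, or with $h_n^j\equiv1$ and $|\xi_n^j|\to\infty$, carries no $\dot H^1$ energy in the limit and hence may be absorbed into the remainder; that a profile with $h_n^j\to 0$ would be incompatible with the uniform $\dot H^1$-bound on $\varphi_n$ (because $\|(T_n^j)^{-1}\varphi_n\|_{\dot H^1}=h_n^j\|\varphi_n\|_{\dot H^1}\to 0$ forces the extracted profile to be trivial); and that when $h_n^j\equiv1$ with $\xi_n^j$ bounded, the normalization \eqref{eq8.3v14} gives $\xi_n^j\equiv0$. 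After relabelling, every surviving profile has $h_n^j\equiv1$ and $\xi_n^j\equiv0$, so $\jb{\na}^{-1}T_n^j\phi^j=e^{i\theta_n^j}e^{-it_n^j\De}\varphi^j(\cdot-x_n^j)$ with $\varphi^j:=\jb{\na}^{-1}\phi^j\in H^1(\R^d)$; this yields \eqref{eq2.18v3new}, with $w_n^k:=\jb{\na}^{-1}r_n^k$ plus the absorbed degenerate profiles, while \eqref{eq2.13v3new} is inherited from \eqref{eq8.4v14}. For \eqref{eq2.15v3new} one notes $\jb{\na}e^{it\De}\jb{\na}^{-1}r_n^k=e^{it\De}r_n^k$ and invokes \eqref{eq8.6v14}, the contribution of the absorbed profiles being controlled by their $\dot H^1$-smallness together with \eqref{eq8.6v14} and interpolation.

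For the decoupling identities I would argue as follows. The quadratic decouplings \eqref{eq2.21v3new} ($s=0,1$), hence the $\dot H^1$- and $L^2$-pieces of \eqref{eq2.22v3new}--\eqref{eq2.24v3new} and of \eqref{eq2.23v3new}, follow by expanding $\||\na|^s\varphi_n\|_{L^2}^2=\langle\,|\na|^{2s}\jb{\na}^{-2}\phi_n,\phi_n\rangle$ and using the asymptotic $L^2$-orthogonality of the decomposition of $\phi_n$: inserting the bounded multiplier $|\na|^{2s}\jb{\na}^{-2}$ does not destroy this orthogonality, since conjugating it through $T_n^j$ again produces a uniformly bounded multiplier acting on $\phi^j$, and the cross terms with the remainder are handled by \eqref{eq8.8v14} (note $\||\na|^s e^{-it_n^j\De}\varphi^j\|_{L^2}=\||\na|^s\varphi^j\|_{L^2}$). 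It remains to decouple the $L^{p+1}$- and $L^{2(d+2)/d}$-norms entering $\E$, $\cS_\om$, $\cH_\om$; here I would use a Brezis--Lieb/refined-Fatou argument: distinct profiles decouple in $L^q$ for $q>2$ because $|x_n^j-x_n^m|\to\infty$ separates them in space and because $|t_n^j-t_n^m|\to\infty$ together with dispersive decay of $e^{-it_n^j\De}\varphi^j$ separates them in time (this is exactly why the energies are recorded as $\E(e^{-it_n^j\De}\varphi^j)$ rather than $\E(\varphi^j)$), while the remainder's $L^q$-contribution is negligible since $\|\jb{\na}e^{it\De}w_n^k\|_{L_{t,x}^{2(d+2)/d}}$, evaluated at $t=0$ via Sobolev embedding, can be made small by taking $k$ large.

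The main obstacle is the degeneration step for the scaling parameter, and intertwined with it, keeping the remainder $w_n^k$ under control simultaneously in the Strichartz norms $L_t^qL_x^r$ \emph{and} after one derivative, i.e. establishing \eqref{eq2.15v3new} for the enlarged remainder. This is the genuinely new feature here: the $\dot H^s$-profile decompositions used previously sidestep it (Carles--Keraani control only the remainder, Miao--Xu--Zhao only its derivative), so the interplay between $\jb{\na}^{-1}$ and the $L^2$-symmetry group — which mixes the homogeneous scaling with the $L^2$ scaling and Galilean boosts — has to be carried out by hand and is where the care is needed.
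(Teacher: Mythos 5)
Your overall plan coincides with the paper's: apply Lemma \ref{le8.1v14} to $\jb{\na}\varphi_n$, argue that the scaling and Galilean parameters must be trivial, identify $\jb{\na}^{-1}T_n^j\phi^j$ with $e^{i\theta_n^j}e^{-it_n^j\De}\varphi^j(\cdot-x_n^j)$ when $h_n^j\equiv1$, $\xi_n^j\equiv0$, obtain \eqref{eq2.21v3new} from the parameter orthogonality and the weak vanishing of the remainder, and get the $L^{p+1}$ decoupling from a refined Fatou/dispersive-decay argument; this is Steps 1, 3, 4 of the paper. The genuine gap is exactly at the step you yourself call the main obstacle, and the two mechanisms you sketch for it would fail. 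First, you cannot ``absorb into the remainder'' a nonzero profile with $h_n^j\to\infty$, or with $h_n^j\equiv1$ and $|\xi_n^j|\to\infty$: since $\jb{\na}e^{it\De}\jb{\na}^{-1}T_n^j\phi^j=e^{it\De}T_n^j\phi^j$ and every $L^2$-admissible $L_t^qL_x^r$ norm is invariant under the symmetries encoded in $T_n^j$ (the $L^2$ scaling, Galilean boosts, space-time translations and phases), such a piece satisfies $\normo{\jb{\na}e^{it\De}\jb{\na}^{-1}T_n^j\phi^j}_{L_t^qL_x^r}=\normo{e^{it\De}\phi^j}_{L_t^qL_x^r}$, a fixed positive constant independent of $n$; a remainder containing this piece for every $k$ can therefore never satisfy \eqref{eq2.15v3new}. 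Second, the energetic justifications are off: for $h_n^j\equiv1$, $|\xi_n^j|\to\infty$ the piece $\jb{\na}^{-1}T_n^j\phi^j$ loses its $L^2$ mass but its $\dot H^1$ norm tends to $\|\phi^j\|_{L^2}$, so it does carry $\dot H^1$ energy; and for $h_n^j\to0$ the identity $\|(T_n^j)^{-1}\varphi_n\|_{\dot H^1}=h_n^j\|\varphi_n\|_{\dot H^1}$ is not the relevant quantity, because the $L^2$ decomposition has been applied to $\jb{\na}\varphi_n$, not to $\varphi_n$: the corresponding piece of $\varphi_n$ lives at frequencies of size $(h_n^j)^{-1}$ and has bounded $\dot H^1$ norm comparable to $\|\phi^j\|_{L^2}$, so no $\dot H^1$ bound is violated, and smallness of that rescaled quantity does not show that the weak limit defining the profile vanishes.

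What the paper does at this point is different in kind: it argues that in every degenerate regime the extracted profile is identically zero, so nothing is ever pushed into the remainder. Concretely, from $\varphi^j=\jb{\na}^{-1}(T_n^j)^{-1}\jb{\na}(w_n^{j-1}-w_n^j)$ and the weak vanishing \eqref{eq8.8v14} it deduces $\jb{\na}^{-1}(T_n^j)^{-1}\jb{\na}w_n^{j-1}\rightharpoonup\varphi^j$ in $H^1$, extracts a weak limit $\psi^j$ of $w_n^{j-1}$, and then, via Rellich--Kondrashov on balls, bounds $\|\varphi^j\|_{L^q(B_K)}$ by $\liminf_n$ of the explicit multiplier norm $\normo{\tfrac{|h_n^j\xi|^{\frac d2-\frac dq}}{\jb{h_n^j\xi}}\jb{\xi+\xi_n^j}\widehat{\psi^j}(\xi+\xi_n^j)}_{L^2}$, which it shows tends to zero (dominated convergence, with suitable choices of $q$) whenever $|h_n^j\xi_n^j|\to\infty$ or $\xi_n^j=0$ with $h_\infty^j\in\{0,\infty\}$; hence $\varphi^j=0$ there. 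Your proposal contains no substitute for this vanishing argument: ``reading off'' the degeneration from the conjugated multiplier is precisely the statement that has to be proved, and your absorption scheme cannot replace it for the reason above. A smaller point: in the nonlinear decoupling you neither need nor can use smallness of the remainder's $L^{p+1}$ norm (a spacetime Strichartz norm cannot be ``evaluated at $t=0$''); as in the paper's Step 4, the term $\|w_n^k\|_{L^{p+1}}^{p+1}$ is kept in the identity and \eqref{eq2.22v3new}--\eqref{eq2.24v3new} are proved profile by profile via the refined Fatou lemma when $t_n^j$ is bounded and the dispersive estimate when $|t_n^j|\to\infty$.
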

\begin{proof}
We divide the proof into four steps.

{\it Step 1. }  Applying Lemma \ref{le8.1v14} to $\{ \jb{\nabla} \varphi_n\}$, we have
\begin{equation}\label{eq4.22v6new}
\jb{\nabla} \varphi_n(x) = \sum\limits_{j=1}^k e^{i\theta_n^j} e^{ix\cdot\xi_n^j} e^{-it_n^j \Delta}
 \bigg(\tfrac1{(h_n^j)^\frac{d}2} \Big(\jb{\nabla} \varphi^j\Big)\Big(\tfrac{\cdot-x_n^j} {h_n^j}\Big)\bigg)(x) + \jb{\nabla} w_n^k(x),
\end{equation}
where
\begin{align}
 & \tau_n^j = - \tfrac{t_n^j}{(h_n^j)^2} \to \tau_\infty^j \in [-\infty, \infty],  \nonumber \\
 &   h_n^j \to h_\infty^j \in \{0, 1, \infty\},  \text{ as } n\to \infty, \text{ and }  h_n^j = 1 \text{ if  } h_\infty^j = 1,  \label{eq4.22v7new} \\
 & \xi_n^j = 0 \text{  if  }  \quad \underset{ n \to \infty} {\limsup}\,|h_n^j \xi_n^j| < \infty, \label{eq8.24v14}\\
 & \tfrac{h_n^j}{h_n^m} + \tfrac{h_n^m}{h_n^j}+ h_n^j | \xi_n^j -\xi_n^m|+ \tfrac{|t_n^j - t_n^m|}{(h_n^j)^2} +  \left|\tfrac{x_n^j - x_n^m}{h_n^j} + \tfrac{2(t_n^j \xi_n^j - t_n^m \xi_n^m)}{h_n^j}\right| \to \infty, \text{ as }  n\to \infty, \text{ for }  j \ne m, \nonumber \\
& \underset{n\to\infty} \limsup \norm{ \jb{\nabla} e^{it\Delta}  w_n^k}_{L_t^q L_x^r(\R \times \R^d)} \to 0, \text{ as } k\to \infty, \nonumber\\
& \text{where } (q,r) \text{ is $L^2$-admissible and $2 < q < \infty$ when $d\ge 2$, $4 < q < \infty$ when $d = 1$}. \nonumber
\end{align}
Moreover, we have
\begin{align}
\forall\  k\ge 1,
\quad \norm{\jb{\nabla} \varphi_n}_{L^2}^2 - \sum\limits_{j=1}^k \normo{\jb{\nabla}  \varphi^j}_{L^2}^2 - \norm{\jb{\nabla} w_n^k}_{L^2}^2 \to 0, \text{ as }  n\to \infty. \nonumber
\end{align}
By \eqref{eq4.22v6new} and the Galilean transform, we have
\begin{align}\nonumber
e^{it\Delta}\varphi_n(x) & =  \sum\limits_{j=1}^k e^{it\Delta}
\jb{\nabla}^{-1}e^{i\theta_n^j} e^{ix\cdot \xi_n^j} e^{-i {t_n^j}
\Delta} \bigg( \tfrac1{(h_n^j)^\frac{d}2} \Big( \jb{\nabla}
\varphi^j\Big)\Big(\tfrac{\cdot- x_n^j}{h_n^j}\Big) \bigg)\big(
x\big)  + e^{it\Delta} w_n^k(x) \\\label{equ4.22}
& = \sum\limits_{j=1}^k \jb{\nabla}^{-1} {G}_n^j(e^{it\Delta} \jb{\nabla} \varphi^j)(x) + e^{it\Delta} w_n^k(x),
\end{align}
where ${G}_n^j(e^{it\Delta} \varphi)(x) =  e^{it\Delta} T_n^j \varphi(x)$ and $T_n^j$ is defined in Lemma \ref{le8.1v14}.

By the density of $C_0^\infty(\R^d)$ in $H^1(\R^d)$, we can assume that $\varphi^j$ is smooth for all $j\geq1$.

{\it Step 2. } We now show $h_\infty^j =1$ and $\xi_n^j = 0$. By \eqref{eq4.22v7new} and \eqref{eq8.24v14}, we only need to exclude the case $h_\infty^j = 0, \infty$ and
$\underset{ n \to \infty} \limsup \, |h_n^j \xi_n^j| = \infty$.

By \eqref{equ4.22}, we have
\begin{equation*}
\jb{\nabla}^{-1} {G}_n^j(e^{it\Delta} \jb{\nabla} \varphi^j) = e^{it\Delta}(w_n^{j-1} - w_n^j),
\end{equation*}
which implies that
\begin{align}\label{eq2.26v3new}
\begin{split}
 \varphi^j & = \jb{\nabla}^{-1} e^{-it\Delta} ({G}_n^j)^{-1} \jb{\nabla} e^{it\Delta}( w_n^{j-1} - w_n^j)\\
                       &  = \jb{\nabla}^{-1}  (T_n^j)^{-1}   \jb{\nabla} ( w_n^{j-1} - w_n^j).
\end{split}
\end{align}
We note by \eqref{eq8.8v14}, for $1 \le m \le j$,
\begin{equation}\label{eq8.28v14}
(T_n^m)^{-1} \jb{\nabla}w_n^j \rightharpoonup 0 \text{ in } L^2(\R^d), \text{ as } n\to \infty,
\end{equation}
so
\begin{equation*}
 \jb{\nabla}^{-1}  (T_n^m)^{-1} \jb{\nabla} w_n^j \rightharpoonup 0 \text{ in }  H^1(\R^d), \text{ as } n\to \infty.
 \end{equation*}
This together with \eqref{eq2.26v3new} yields
\begin{equation}\label{eq2.28v3new}
 \jb{\nabla}^{-1}  (T_n^j )^{-1}   \jb{\nabla}   w_n^{j-1}  \rightharpoonup  \varphi^j \text{ in }  H^1(\R^d), \text{ as } n\to \infty.
\end{equation}
 Meanwhile, since $\{w_n^{j-1}\}$ is bounded in $H^1(\R^d)$,  there exists a subsequence of $\{w_n^{j-1}\}$ and $\psi^j \in H^1(\R^d)$ that
\begin{equation}\label{eq2.29v3new}
w_n^{j-1} \rightharpoonup \psi^j \text{ in }  H^1(\R^d), \text{ as }  n \to \infty.
\end{equation}
  Combining \eqref{eq2.28v3new}, \eqref{eq2.29v3new}, we get
  \begin{equation*} \label{eq2.30v3new}
\jb{\nabla}^{-1}(T_n^j)^{-1} \jb{\nabla} \psi^j \rightharpoonup \varphi^j  \text{ in }  H^1(\R^d), \text{ as } n\to \infty.
\end{equation*}
Then by the Rellich-Kondrashov theorem(see \cite{Lieb-Loss}), for any ball $B_K$ centered at the origin with radius $K$, we have
\begin{equation*} \label{eq2.30v3new}
\jb{\nabla}^{-1}(T_n^j)^{-1} \jb{\nabla} \psi^j \to \varphi^j  \text{ in }  L^q(B_K), \text{ as } n\to \infty
\end{equation*}
for $2 \le q < \infty$ for $d=1,2$ and $2\leq q<\frac{2d}{d-2}$ for $d\geq3$.
So
\begin{align}\nonumber
\normo{\varphi^j}_{L^q(B_K)}
& \le \underset{n\to \infty} \liminf\, \normo{\jb{\nabla}^{-1} (T_n^j)^{-1} \jb{\nabla} \psi^j}_{L^q(\R^d)}\\\nonumber
& =  \underset{n\to \infty} \liminf\, (h_n^j)^\frac{d}2 \normo{
\left(\tfrac1{\jb{h_n^j \nabla}} e^{it_n^j \Delta}
e^{-i\langle\cdot,~ \xi_n^j\rangle} \jb{\nabla} \psi^j\right)(h_n^j
x)}_{L^q(\R^d)}\\\nonumber & = \underset{n\to \infty} \liminf\,
(h_n^j)^{\frac{d}2-\frac{d}q} \normo{\tfrac1{\jb{h_n^j \nabla}}
e^{it_n^j \Delta} e^{-ix\cdot\xi_n^j} \jb{\nabla}
\psi^j}_{L^q(\R^d)}\\\nonumber & \lesssim \underset{n\to \infty}
\liminf\,  \normo{\tfrac{|h_n^j
\nabla|^{\frac{d}2-\frac{d}q}}{\jb{h_n^j \nabla}} e^{it_n^j \Delta}
e^{-ix\cdot\xi_n^j} \jb{\nabla} \psi^j}_{L^2(\R^d)}\\\label{equ4.23}
& =  \underset{n\to \infty} \liminf\,\normo{\tfrac{|h_n^j \xi|^{\frac{d}2-\frac{d}q}}{\jb{h_n^j \xi}} \jb{\xi + \xi_n^j} \widehat{\psi^j}(\xi + \xi_n^j)}_{L^2(\R^d)}.
\end{align}

{\bf Case I.}  $|h_n^j \xi_n^j|\to \infty, \text{ as } n\to \infty$.

When $h_\infty^j<\infty$, we take $q = 2$, then by the dominated convergence theorem, we have
\begin{align*}
 \normo{\tfrac{1} {\jb{h_n^j \xi}} \jb{\xi + \xi_n^j} \widehat{\psi^j}(\xi + \xi_n^j)}_{L^2(\R^d)}= \normo{\tfrac{1}{\jb{h_n^j (\xi - \xi_n^j)}} \jb{\xi } \widehat{\psi^j}(\xi )}_{L^2(\R^d)} \to 0, \text{ as } n \to \infty.
\end{align*}
This implies $\varphi^j=0$ in $H^1(\R^d)$.

When $h_\infty^j=\infty$, we take $q = 2$, if $\xi_n^j \to \bar{\xi}\in \R^d$, for any small number $\varepsilon>0$, we can take $r=r(\varepsilon)>0$ such that
$$\normo{\tfrac{1}{\jb{h_n^j (\xi - \xi_n^j)}} \jb{\xi } \widehat{\psi^j}(\xi )}_{L^2(B_r(\bar{\xi}))}\leq\varepsilon$$
by the continuity of the integral. And we use the dominated convergence theorem again to obtain
$$\normo{\tfrac{1}{\jb{h_n^j (\xi - \xi_n^j)}} \jb{\xi } \widehat{\psi^j}(\xi )}_{L^2(\R^d\backslash B_r(\bar{\xi}))}\to 0.$$
Since $\varepsilon$ is arbitrary, we have $\varphi^j=0$ in $H^1(\R^d)$. If $|\xi_n^j| \to \infty$, the dominated convergence theorem will guarantee
that $\varphi^j=0$ in $H^1(\R^d)$.

{\bf Case II.} If $\underset{n \to \infty} \limsup \, |h_n^j \xi_n^j| < \infty$, then we obtain $\xi_n^j =0$ by \eqref{eq8.24v14}.

When $h^j_\infty=0$, we can take some $q>2$, we have
\begin{align*}
\normo{\tfrac{|h_n^j \xi|^{\frac{d}2-\frac{d}q}}{\jb{h_n^j \xi}}
\jb{\xi } \widehat{\psi^j}(\xi )}_{L^2(\R^d)}\to 0, \text{ as } n\to
\infty,
\end{align*}
so $\normo{\varphi^j}_{L^q(B_K)} = 0$ for any $K>0$. Therefore, $\normo{\varphi^j}_{L^q(\R^d)} = 0$. Since $\varphi^j$ is smooth, $\varphi^j=0$ in $H^1(\R^d)$.

When $h^j_\infty=\infty$, we take $q=2$. Then by the dominated convergence theorem, we have
\begin{equation*}
 \normo{\tfrac{1}{\jb{h_n^j \xi}} \jb{\xi }
 \widehat{\psi^j}(\xi )}_{L^2(\R^d)} \to 0, \text{ as } n \to \infty,
\end{equation*}
which implies $\varphi^j=0$ in $H^1(\R^d)$.

Combining the above facts, we conclude that $h^j_\infty=1$ and $\xi_n^j=0$. This together with \eqref{equ4.22} implies  \eqref{eq2.13v3new} and  \eqref{eq2.18v3new}.

{\it Step 3.} We now turn to \eqref{eq2.21v3new}.
By \eqref{eq2.18v3new}, we have
\begin{equation*}
\varphi_n(x) = \sum\limits_{j=1}^k e^{i\theta_n^j} e^{-i{t_n^j}\Delta} \varphi^j(x-x_n^j) + w_n^k(x),
\end{equation*}
so we only need to show the orthogonality:
\begin{align*}
&  \left\langle \mu\jb{\nabla} (e^{i\theta_n^j} e^{- i t_n^j  \Delta} \varphi^j(x-x_n^j) ), \mu \jb{\nabla}  (e^{i\theta_n^m} e^{-i t_n^m \Delta} \varphi^m(x-x_n^m) )\right\rangle_{L^2} \to 0, \ \forall\, j \ne m,\\
& \left\langle\mu \jb{\nabla} (e^{i\theta_n^j} e^{- i t_n^j  \Delta} \varphi^j)(x-x_n^j), \mu \jb{\nabla}w_n^k(x)\right\rangle_{L^2} \to 0, \text{ for } 1 \le j \le k, \text{ as }  n\to \infty,
\end{align*}
for $\mu = \frac1{\jb{\nabla}}$ and $\mu= \frac{|\nabla|}{\jb{\nabla}}$, respectively. This follows from
\begin{align*}
& \left\langle \mu\jb{\nabla} (e^{i\theta_n^j} e^{-i t_n^j  \Delta} \varphi^j(x-x_n^j) ), \mu \jb{\nabla}  (e^{i\theta_n^m} e^{-i t_n^m \Delta} \varphi^m )(x-x_n^m) \right\rangle_{L^2}\\
= & \bigg\langle  S_n^{m,j} \mu \jb{\nabla} \varphi^j,   \mu \jb{\nabla}  \varphi^m\bigg\rangle_{L^2} \to 0, \text{ as } n\to \infty,
\end{align*}
where
\begin{align*}
S_n^{m,j} \varphi(x)&  = e^{i(\theta_n^j -\theta_n^m)}   e^{-i(t_n^j - t_n^m)\Delta}    \varphi ( x-(x_n^j - x_n^m))
\rightharpoonup 0, \text{ as }  n\to \infty,\  \forall \, \varphi\in L^2,
\text{ by } \eqref{eq2.13v3new}
\end{align*}
and
\begin{align*}
 \left\langle\mu \jb{\nabla}(e^{i\theta_n^j} e^{-i{t_n^j}  \Delta} \varphi^j(x-x_n^j)), \mu \jb{\nabla}w_n^k(x)\right\rangle_{L^2}
= &  \left\langle    \mu \jb{\nabla} e^{i\theta_n^j} e^{-i{t_n^j}  \Delta} \varphi^j(x-x_n^j), \mu \jb{\nabla}w_n^k(x)\right\rangle_{L^2} \\
= &  \left\langle   \mu^2 \jb{\nabla} \varphi^j, ( T_n^j )^{-1} \jb{\nabla}w_n^k\right\rangle_{L^2}\\
 \to & \ 0, \text{ as } n \to \infty,\  \forall \, 1 \le j \le k, \text{ by \eqref{eq8.28v14}.}
\end{align*}

{\it Step 4.} Since we already have \eqref{eq2.21v3new}, to obtain \eqref{eq2.22v3new}, \eqref{eq2.23v3new}, \eqref{eq2.25v3new}, \eqref{eq2.24v3new}, it suffices to show
\begin{equation} \label{eq2.32v3new}
\normo{\varphi_n}_{L^{p+1}}^{p+1} - \sum\limits_{j=1}^k \normo{ e^{-it_n^j\Delta}  \varphi^j}_{L^{p+1}}^{p+1} - \normo{w_n^k}_{L^{p+1}}^{p+1} \to 0, \text{ as } n\to  \infty,
\text{ for }  1 + \frac4d \le p \le 1 + \frac4{d-2}.
\end{equation}

Suppose that $t_\infty^1  \in \R$, then the refined Fatou Lemma(see \cite{Killip-Visan, Lieb-Loss}) shows that
\begin{equation}
\underset{n\to \infty} \lim \left | \normo{\varphi_n}_{L^{p+1}}^{p+1} - \normo{e^{i\theta_n^1} e^{-it_n^1  \Delta} \varphi^1}_{L^{p+1}}^{p+1} - \normo{w_n^1}_{L^{p+1}}^{p+1}  \right|  = 0.
\end{equation}
  Next, suppose that $t_\infty^1 =  \pm \infty$, then by the dispersive estimate, we obtain
\begin{align*}
& \underset{n\to \infty} \lim \left| \normo{\varphi_n}_{L^{p+1}}^{p+1} - \normo{e^{i\theta_n^1} e^{-it_n^1  \Delta} \varphi^1}_{L^{p+1}}^{p+1} - \normo{w_n^1}_{L^{p+1}}^{p+1}  \right|\nonumber\\
\le & \underset{n\to \infty} \lim  \left | \normo{
 \varphi_n}_{L^{p+1}}^{p+1} - \normo{w_n^1}_{L^{p+1}}^{p+1} \right| +   \underset{n\to \infty} \lim  \normo{ e^{-it_n^1 \Delta} \varphi^1}_{L^{p+1}}^{p+1}\nonumber \\
\le & \underset{n\to \infty} \lim \left( \normo{\varphi_n}_{L^{p+1}}^p
+ \normo{   w_n^1}_{L^{p+1}}^p\right) \normo{ e^{-it_n^1 \Delta} \varphi^1}_{L^{p+1}} +    \underset{n\to \infty} \lim  \normo{ e^{-it_n^1  \Delta} \varphi^1}_{L^{p+1}}^{p+1}. \label{eq8.34v14}\\
\lesssim & \underset{n\to \infty} \lim \left( \normo{ \varphi_n}_{H^1}^p
+ \normo{   w_n^1}_{H^1}^p\right) \normo{ e^{-it_n^1 \Delta} \varphi^1}_{L^{p+1}}
  +  \underset{n\to \infty} \lim \normo{ e^{-it_n^1 \Delta }\varphi^1}_{L^{p+1}}^{p+1}
 = 0.
\end{align*}
Thus, we have proved
 \begin{equation}\label{eq2.40v3new}
 \normo{\varphi_n}_{L^{p+1}}^{p+1} - \normo{ e^{-it_n^1  \Delta} \varphi^1}_{L^{p+1}}^{p+1} - \normo{w_n^1}_{L^{p+1}}^{p+1}\to 0, \text{ as } n \to \infty.
\end{equation}
 Similarly, we can show
\begin{equation}
\normo{w_n^1}_{L^{p+1}}^{p+1} - \normo{ e^{-it_n^2  \Delta} \varphi^2}_{L^{p+1}}^{p+1} - \normo{w_n^2}_{L^{p+1}}^{p+1} \to 0, \text{ as  } n \to \infty,
\end{equation}
which together with \eqref{eq2.40v3new} shows \eqref{eq2.32v3new} when $k = 2$.
Repeating this procedure, we obtain \eqref{eq2.32v3new} for any $k\ge 1$.
\end{proof}

\section{Extraction of a critical element}
In this section, we show the existence of the critical element in
the general case by using the profile decomposition and the
long-time perturbation theory.

By Proposition \ref{th4.1}({\it v}), it suffices for Theorem \ref{th1.4} to show that any solution $u$ to \eqref{eq1.1} with $u_0 \in A_{\omega, +}$ satisfies
\begin{equation*}
\|u\|_{L_{t,x}^\frac{2(d+2)}d \cap L_{t,x}^\frac{(d+2)(p-1)}2 ( I_{max} \times \R^d)} < \infty,
\end{equation*}
where $I_{max}$ denotes the maximal interval where $u$ exists.

To this end, for $m > 0$, let
\begin{equation}\label{eq5.1}
\Lambda_\omega(m) = \sup\Big\{ \|u\|_{K( I_{max})}:
\begin{array}{l}
\text{ $u$ is a solution to \eqref{eq1.1} with  }\\
 \text{ $u_0 \in A_{\omega, +}$ and $\cS_\om(u) \le m$}
\end{array} \Big\}
\end{equation}
with $\|u\|_{K(I)}:=\|u\|_{L_{t,x}^\frac{2(d+2)}d \cap
L_{t,x}^\frac{(d+2)(p-1)}2 (I\times\R^d)}$and define
\begin{equation} \label{eq5.2}
m_\om^* = \sup\{ m>0: \Lambda_\omega(m)< \infty\}.
\end{equation}
 If  $u_0\in A_{\om, +}$ with  $\cS_\om(u_0) \le m$ sufficiently small, then Lemma \ref{le2.8} shows
$\|u_0\|_{H^1} \ll 1$. Hence, Proposition \ref{th4.1}{(\textit{i})}
gives the finiteness of $\Lambda_\om(m)$, which implies $m_\om^* >
0$.

  Now our aim is to show $m_\om^* \ge  m_\om$ defined by \eqref{eq1.7}.
Suppose by contradiction that $m_\om^* < m_\om$, we will show the existence of the critical element.
In fact, by the definition of $m_\om^*$, we can take a sequence
$\{u_n\}$ of solutions (up to time translations) to \eqref{eq1.1}
that
\begin{align}
& u_n(t) \in A_{\om,+}, \text{ for $t\in I_n$,}
 \quad \text{ and }\cS_\om(u_n) \to m_\om^*, \text{   as $n\to \infty$,} \label{eq5.4}\\
& \underset{n\to \infty} {\lim} \|u_n\|_{K([0,\sup I_n))} =
\underset{n\to \infty} {\lim} \|u_n\|_{K((\inf I_n, 0])} =
\infty,\label{eq5.5}
\end{align}
where $I_n$ denotes the maximal interval of $u_n$ including 0.

By Lemma \ref{le2.8},
\begin{equation}\label{eq5.6}
\underset{n}{\sup}\  \norm{u_n}_{L_t^\infty H_x^1(I_n\times \R^d)}^2 \lesssim m_\om + \frac{m_\om}\om.
\end{equation}
Applying Theorem \ref{le5.2} to $\{u_n(0,x)\}$ and obtain some subsequence of $\{u_n(0,x)\}$(still denoted by the same symbol), then
there exists $ \varphi^j \in H^1(\R^d)$ and $(  \theta_n^j,  t_n^j, x_n^j)_{n\ge 1}$ of sequences in $\R/2\pi \mathbb{Z}   \times \R \times \R^d  $, with
  \begin{align}
 &  t_n^j   \to t_\infty^j \in [-\infty, \infty], \nonumber \\
  & {|t_n^j - t_n^m|}+ \left|{x_n^j-x_n^m}  \right|
  \to \infty,\   \forall \,j \ne m, \text{  as } n\to \infty,\label{eq6.48v3}
\end{align}
such that, $\forall \,k \in \N$, there exists $w_n^k\in H^1(\R^d)$,
\begin{align}
e^{it\De} u_n(0,x)  & = \sum\limits_{j=1}^k e^{i\theta_n^j} e^{i(t-t_n^j)\Delta}\varphi^j(x-x_n^j) + e^{it\De} w_n^k(x).\label{eq6.51v3}
\end{align}
The remainder $w_n^k$ satisfies
\begin{equation}\label{eq6.50v3}
\underset{n\to \infty}{\limsup}\  \norm{\jb{\na} e^{it\De} w_n^k}_{L_t^q L_x^r(\R \times \R^d)} \to 0, \text{ as }  k \to \infty,
\end{equation}
where $(q,r)$ is $L^2$-admissible, $2< q < \infty$ when $d\ge 2$ and $4 < q < \infty$ when $d = 1$.
Moreover, for any $k\in \N$, $s = 0,1$, we have
\begin{align}
& \big\||\na|^s u_n(0)\big\|_{L^2}^2 - \sum\limits_{j=1}^k \big\||\na|^s e^{-it_n^j\Delta} \varphi^j \big\|_{L^2}^2 -
\big\||\na|^s w_n^k\big\|_{L^2}^2 \to 0,\label{eq6.52v3}\\
& \E(u_n(0)) - \sum\limits_{j=1}^k \E(e^{-it_n^j\Delta} \varphi^j) - \E(w_n^k) \to 0,\\
& \cS_\om(u_n(0)) - \sum\limits_{j=1}^k \cS_\om(e^{-it_n^j\Delta} \varphi^j) - \cS_\om ( w_n^k) \to 0,\label{eq6.54v3}\\
& \K(u_n(0)) - \sum\limits_{j=1}^k \K(e^{-it_n^j\Delta} \varphi^j) - \K(w_n^k) \to 0,\\
& \cH_\om(u_n(0)) - \sum\limits_{j=1}^k \cH_\om(e^{-it_n^j\Delta} \varphi^j) - \cH_\om(w_n^k) \to 0, \label{eq6.55v3} \text{  as } n\to \infty.
\end{align}
Using Strichartz estimate, \eqref{eq6.52v3} and \eqref{eq5.6}, we
get
\begin{equation}\label{eq6.57v3}
\underset{k\in \mathbb{N}} \sup \ \underset{n\to \infty} \limsup\, \norm{ e^{it\Delta} w_n^k}_{ S^1(\R)}
\lesssim \underset{k \in \mathbb{N}}\sup\   \underset{n \to \infty} \limsup \,\norm{w_n^k}_{H^1} < \infty.
\end{equation}
Next, we construct the nonlinear profile. We define the nonlinear
profile $u^j \in C((T_{min}^j, T_{max}^j), H^1(\R^d))$ to be the
maximal lifespan solution of $i\partial_t u  + \Delta u =
|u|^\frac4d u - |u|^{p-1} u$  such that
\begin{equation} \label{eq6.65v3}
\normo{u^j(- t_n^j) - e^{-it_n^j\Delta} \varphi^j}_{H^1} \to 0, \text{ as  } n\to \infty.
\end{equation}
 The unique existence of $u^j$ around $t = -t_\infty^j$ is known in all cases, including $t_\infty^j = \pm \infty$(the latter corresponding to the existence of the wave operators), by using the standard iteration with the Strichartz estimate.

Let
\begin{equation}
u_n^j = e^{i\theta_n^j} u^j(t-t_n^j, x-x_n^j),
\end{equation}
then, the lifespan of $u_n^j$ is $( T_{min}^j + t_n^j,  T_{max}^j +
t_n^j)$.

For the linear profile decomposition \eqref{eq6.51v3},
we can give the corresponding nonlinear profile decomposition
\begin{equation} \label{eq5.21v12}
u_n^{< k}(t) = \sum\limits_{j=1}^k u_n^j(t).
\end{equation}
We will show $u_n^{< k} + e^{it\Delta} w_n^k$ is a good
approximation for $u_n$ provided that each nonlinear profile has
finite global Strichartz norm, which is the key to show the
existence of the critical element.

When $j$ large enough, we have the following basic fact about $u^j$:
\begin{lem}\label{le6.8v3}
  There exists $j_0\in \N$ such that $T_{min}^j = -\infty,\  T_{max}^j = \infty$ for $j > j_0$ and
\begin{equation}
\sum\limits_{j> j_0} \norm{ u^j}_{S^1(\R)}^2  \lesssim \sum\limits_{j > j_0} \norm{ \varphi^j}_{H^1}^2 < \infty.
\end{equation}
\end{lem}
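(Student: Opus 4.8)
The plan is to combine the $H^1$-decoupling of Theorem~\ref{le5.2} with the small-data global well-posedness of Proposition~\ref{th4.1}(i) and the small-data existence of wave operators already invoked after \eqref{eq6.65v3}. First I would establish summability of the profiles: adding the $s=0$ and $s=1$ cases of \eqref{eq6.52v3} and using that $e^{-it_n^j\De}$ commutes with $\jb{\nabla}$ and acts unitarily on $L^2$, so that $\norm{e^{-it_n^j\De}\varphi^j}_{H^1}=\norm{\varphi^j}_{H^1}$ is independent of $n$, one gets for every $k$
$$\sum_{j=1}^k\norm{\varphi^j}_{H^1}^2\le\liminf_{n\to\infty}\norm{u_n(0)}_{H^1}^2\lesssim m_\om+\tfrac{m_\om}{\om},$$
by \eqref{eq5.6}. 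Letting $k\to\infty$ yields $\sum_{j\ge1}\norm{\varphi^j}_{H^1}^2<\infty$, so the tail $\sum_{j>j_0}\norm{\varphi^j}_{H^1}^2\to0$ as $j_0\to\infty$.

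Next I would fix the small-data threshold and choose $j_0$. Let $\eta=\eta(d,p)>0$ be such that any maximal-lifespan solution of $i\partial_t u+\Delta u=|u|^\frac4d u-|u|^{p-1}u$ whose datum --- prescribed either at a finite time, or as an asymptotic state at $t=+\infty$, or at $t=-\infty$ --- has $H^1$-norm at most $\eta$ is global and obeys $\norm{u}_{S^1(\R)}\lesssim$ that norm, with implicit constant depending only on $d,p$; such an $\eta$ is furnished by Proposition~\ref{th4.1}(i) and the standard Strichartz iteration for the wave operators. Choose $j_0$ with $\sum_{j>j_0}\norm{\varphi^j}_{H^1}^2\le\eta^2$, so that $\norm{\varphi^j}_{H^1}\le\eta$ for every $j>j_0$. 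For such $j$, split according to \eqref{eq6.48v3}: if $t_\infty^j\in\R$, then $e^{-it_n^j\De}\varphi^j\to e^{-it_\infty^j\De}\varphi^j$ in $H^1$, so \eqref{eq6.65v3} forces $u^j(-t_\infty^j)=e^{-it_\infty^j\De}\varphi^j$ with $H^1$-norm $\norm{\varphi^j}_{H^1}\le\eta$, whence the small-data theory gives $T_{min}^j=-\infty$, $T_{max}^j=\infty$ and $\norm{u^j}_{S^1(\R)}\lesssim\norm{\varphi^j}_{H^1}$; if $t_\infty^j=\pm\infty$, then $u^j$ is by construction the solution of the wave-operator iteration with asymptotic state $\varphi^j$, which closes globally since $\norm{\varphi^j}_{H^1}\le\eta$, again giving $T_{min}^j=-\infty$, $T_{max}^j=\infty$ and $\norm{u^j}_{S^1(\R)}\lesssim\norm{\varphi^j}_{H^1}$. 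Squaring and summing over $j>j_0$, together with the summability from the first step, gives
$$\sum_{j>j_0}\norm{u^j}_{S^1(\R)}^2\lesssim\sum_{j>j_0}\norm{\varphi^j}_{H^1}^2<\infty,$$
which is the lemma.

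The argument is largely bookkeeping. The one point requiring care is the case $t_\infty^j=\pm\infty$: there the profile's datum is only prescribed asymptotically, so the small-data argument must be run directly at infinity (equivalently, via the small-data wave operators) with a constant uniform in $j$, and the threshold $\eta$ must be chosen before $j_0$ so that the smallness used for each of the infinitely many profiles is genuinely uniform. Note also that the lemma only asserts global existence and the $S^1$ bound for $j>j_0$; the finitely many remaining profiles $u^1,\dots,u^{j_0}$ are treated separately in the sequel and are not part of this statement.
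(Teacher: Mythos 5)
Your proposal is correct and follows essentially the same route as the paper: summability of $\sum_j\norm{\varphi^j}_{H^1}^2$ from the decoupling \eqref{eq6.52v3} and the uniform bound \eqref{eq5.6}, hence $\norm{\varphi^j}_{H^1}\to 0$, then small-data global well-posedness (together with the wave-operator construction behind \eqref{eq6.65v3}) giving $\norm{u^j}_{S^1(\R)}\lesssim\norm{\varphi^j}_{H^1}$ for all large $j$. Your write-up merely makes explicit the uniform choice of the smallness threshold and the case distinction $t_\infty^j\in\R$ versus $t_\infty^j=\pm\infty$, which the paper leaves implicit.
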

\begin{proof}
By \eqref{eq5.6} and \eqref{eq6.52v3}, we have
\begin{equation*}
\sum\limits_{j = 1}^\infty \normo{\jb{\nabla}(  e^{-it_n^j \Delta}
\varphi^j)}_{L^2}^2 < \infty,
\end{equation*}
which
shows $\sum\limits_{j=1}^\infty \norm{\varphi^j}_{H^1}^2 < \infty$, and therefore
$\norm{\varphi^j}_{H^1} \to 0,
\text{ as } j \to \infty.$
By the small data global wellposedness and scattering theory together with \eqref{eq6.65v3}, we obtain when $j$ large enough, $\norm{ u^j}_{S^1(\R)} \lesssim \norm{\varphi^j}_{H^1}$ and so we have the desired result.
\end{proof}

\begin{lem}\label{le6.9v3}
In the nonlinear profile decomposition \eqref{eq5.21v12}, if
  \begin{equation} \label{eq6.92v3}
  \norm{ u^j}_{L_{t,x}^\frac{2(d+2)}d \cap L_{t,x}^\frac{(d+2)(p-1)}2((T_{min}^j, T_{max}^j)\times \R^d)} < \infty \text{ for }  1 \le j \le k,
  \end{equation}
then, we have $T_{min}^j = -\infty,\  T_{max}^j = \infty$, and
\begin{align}\label{eq6.93v3}
 \norm{ u_n^j}_{S^1(\R)}=\norm{ u^j}_{S^1(\R)} \lesssim 1,
\end{align}
for $1 \le j \le k$,
and there exists $B >  0$ such that
\begin{align}\label{eq6.94v3}
\underset{n\to \infty} \limsup\  \Big(\normo{
u_n^{<k}}_{L_{t,x}^\frac{(d+2)(p-1)}2 (\R \times \R^d)}
+\normo{\jb{\nabla} u_n^{< k} }_{L_t^\infty L_x^2\cap
L_{t,x}^\frac{2(d+2)}d(\R\times \R^d)} \Big) \le B.
\end{align}
\end{lem}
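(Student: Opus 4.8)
The argument has three parts: (a) global existence with finite $S^1(\R)$-norm for the nonlinear profiles; (b) a decoupling, \emph{uniform in $k$}, of the purely spacetime norms of $u_n^{<k}$; and (c) a separate Duhamel/Strichartz bound for $\normo{\jb{\nabla}u_n^{<k}}_{L_t^\infty L_x^2}$, which is needed because an $L^\infty$-in-time norm does not decouple. For (a): fix $1\le j\le k$. By \eqref{eq6.92v3} the norm $\normo{u^j}_{L_{t,x}^{2(d+2)/d}\cap L_{t,x}^{(d+2)(p-1)/2}}$ is finite on $(T_{min}^j,T_{max}^j)$, so the blow-up criterion and the scattering statement of Proposition \ref{th4.1} force $T_{min}^j=-\infty$, $T_{max}^j=\infty$; partitioning $\R$ into finitely many intervals on which this norm is below the small-data threshold and running Strichartz, the fractional product estimates for $\jb{\nabla}(|u|^{4/d}u-|u|^{p-1}u)$, and a standard continuity argument on each piece then upgrades this to $\normo{u^j}_{S^1(\R)}<\infty$. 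Since $u_n^j$ is a phase rotation and spacetime translation of $u^j$ and every Strichartz norm is invariant under these, $\normo{u_n^j}_{S^1(\R)}=\normo{u^j}_{S^1(\R)}$, which is \eqref{eq6.93v3}. Lemma \ref{le6.8v3} supplies, in addition, the summable bound $\sum_{j>j_0}\normo{u^j}_{S^1(\R)}^2\lesssim\sum_{j>j_0}\normo{\varphi^j}_{H^1}^2<\infty$, which is what makes the tail in (b) manageable.

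For (b), write $u_n^{<k}=\sum_{j\le\min(j_0,k)}u_n^j+\sum_{\min(j_0,k)<j\le k}u_n^j$. The first sum has at most $j_0$ terms, so the triangle inequality bounds its $\jb{\nabla}$-Strichartz norm by $B_1:=\sum_{j\le j_0}\normo{u^j}_{S^1(\R)}$, independent of $n$ and $k$. For the second sum the parameters $(t_n^j,x_n^j)$, $j>j_0$, are pairwise asymptotically orthogonal by \eqref{eq6.48v3}; approximating each $u^j$ in the relevant norm by a function with compact spacetime support (legitimate since $u^j\in C(\R,H^1)\cap L_{t,x}^q$ scatters as $t\to\pm\infty$) and using dispersive decay of $e^{it\Delta}$ to absorb the scattering tails in time, the standard profile-orthogonality estimate gives $\limsup_{n\to\infty}\normo{\sum_{\min(j_0,k)<j\le k}u_n^j}_{L_{t,x}^q}\lesssim\big(\sum_{j>j_0}\normo{u^j}_{L_{t,x}^q}^q\big)^{1/q}$ (and likewise with $\jb{\nabla}$ inserted when $q=\tfrac{2(d+2)}d$). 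Since $\normo{u^j}_{L_{t,x}^{(d+2)(p-1)/2}}+\normo{\jb{\nabla}u^j}_{L_{t,x}^{2(d+2)/d}}\lesssim\normo{u^j}_{S^1(\R)}$, $q\ge2$, and $\sum_{j>j_0}\normo{u^j}_{S^1(\R)}^2<\infty$, these tails are $\le\big(\sum_{j>j_0}\normo{u^j}_{S^1(\R)}^2\big)^{1/2}=:B_2<\infty$, independent of $k$. Hence $\normo{u_n^{<k}}_{L_{t,x}^{(d+2)(p-1)/2}}+\normo{\jb{\nabla}u_n^{<k}}_{L_{t,x}^{2(d+2)/d}}\le B_1+B_2$ in $\limsup_n$, uniformly in $k$.

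For (c), note $u_n^{<k}$ solves $i\partial_t u_n^{<k}+\Delta u_n^{<k}=\sum_{j=1}^kF(u_n^j)$ with $F(u)=|u|^{4/d}u-|u|^{p-1}u$, so Duhamel from $t=0$ and Strichartz give $\normo{\jb{\nabla}u_n^{<k}}_{L_t^\infty L_x^2}\lesssim\normo{\jb{\nabla}u_n^{<k}(0)}_{L_x^2}+\normo{\jb{\nabla}\sum_{j}F(u_n^j)}_{L_{t,x}^{2(d+2)/(d+4)}}$. By \eqref{eq6.65v3}, $u_n^{<k}(0)=u_n(0)-w_n^k+o_n(1)$ in $H^1$, so \eqref{eq6.52v3} and \eqref{eq5.6} give $\normo{\jb{\nabla}u_n^{<k}(0)}_{L_x^2}^2\le\normo{\jb{\nabla}u_n(0)}_{L_x^2}^2+o_n(1)\lesssim m_\om+m_\om/\om$. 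Writing $\sum_jF(u_n^j)=F(u_n^{<k})+(\sum_jF(u_n^j)-F(u_n^{<k}))$, the remainder tends to $0$ in $\jb{\nabla}L_{t,x}^{2(d+2)/(d+4)}$ by the asymptotic orthogonality of the profiles together with Lemma \ref{le6.8v3} (the standard fact that the nonlinearity of a sum of asymptotically orthogonal profiles decouples), while $\normo{\jb{\nabla}F(u_n^{<k})}_{L_{t,x}^{2(d+2)/(d+4)}}\lesssim\big(\normo{u_n^{<k}}_{L_{t,x}^{2(d+2)/d}}^{4/d}+\normo{u_n^{<k}}_{L_{t,x}^{(d+2)(p-1)/2}}^{p-1}\big)\normo{\jb{\nabla}u_n^{<k}}_{L_{t,x}^{2(d+2)/d}}\le C(B_1+B_2)$ by (b). Collecting these, $\limsup_n\normo{\jb{\nabla}u_n^{<k}}_{L_t^\infty L_x^2}$ is bounded by a constant depending only on $\om,m_\om,B_1,B_2$, and \eqref{eq6.94v3} follows with $B$ the sum of all these constants.

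The main obstacle is the uniformity in $k$, which is exactly what gives the lemma its force: one must control the entire infinite tail $\{u^j\}_{j>j_0}$ at once, and this works only because Lemma \ref{le6.8v3} makes $\sum_{j>j_0}\normo{u^j}_{S^1(\R)}^2$ finite and because the $L_{t,x}^q$ norms ($q<\infty$) genuinely decouple under \eqref{eq6.48v3}; the $L_t^\infty L_x^2$ norm, which does not decouple, has to be pushed back onto the initial time via Duhamel. The two orthogonality inputs — the $L_{t,x}^q$ decoupling of spacetime-translated profiles and the decoupling of the nonlinearity $\sum_jF(u_n^j)\approx F(\sum_ju_n^j)$ — are the only genuinely technical points, and both are handled by the now-standard compact-approximation-plus-dispersion argument.
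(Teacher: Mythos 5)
Your proposal is correct, and parts (a) and (b) follow essentially the paper's own route: the paper also gets global existence and the $S^1$ bounds from Proposition \ref{th4.1} and Strichartz, and it also derives the spacetime-norm bound \eqref{eq6.94v3} from Lemma \ref{le6.8v3} plus the orthogonality \eqref{eq6.48v3}; the only cosmetic difference in (b) is that the paper keeps all $k$ profiles together and uses the pointwise inequality $\big| |\sum_j u_n^j|^q - \sum_j |u_n^j|^q\big| \le C_{k,q}\sum_{j\ne m}|u_n^j|^{q-1}|u_n^m|$, estimating the cross terms by H\"older and $\norm{u_n^j u_n^m}_{L_{t,x}^{(d+2)(p-1)/4}}\to 0$, whereas you split head/tail at $j_0$ and invoke the abstract decoupling for the tail -- same mechanism, same source of $k$-uniformity. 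Where you genuinely diverge is the $L_t^\infty L_x^2$ component: the paper simply asserts it follows ``similarly'' by direct decoupling (a uniform-in-time almost-orthogonality of the $L_x^2$ pairings, which is slightly more delicate than the spacetime Lebesgue case, as you rightly observe), while you push it back to $t=0$ via Duhamel and Strichartz, using \eqref{eq6.51v3}, \eqref{eq6.65v3}, \eqref{eq6.52v3}, \eqref{eq5.6} for the data and the nonlinear decoupling $\sum_j F(u_n^j)\approx F(u_n^{<k})$ for the forcing. Your route is heavier but self-contained in the sense that it sidesteps the $L_t^\infty$-decoupling subtlety; note that the nonlinear decoupling you import is exactly the estimate \eqref{eq6.140v3} that the paper proves later in Lemma \ref{le6.10v3}, and there is no circularity since it only requires the individual profile bounds \eqref{eq6.93v3} and the orthogonality \eqref{eq6.48v3}, both already in hand at that point. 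Both arguments deliver the bound $B$ uniformly in $k$, which is what the long-time perturbation step actually needs.
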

\begin{proof}
By Proposition \ref{th4.1}({\it v}) and \eqref{eq6.92v3}, we have
$T_{min}^j = -\infty,\  T_{max}^j = \infty$.

Using the Strichartz estimate and \eqref{eq6.92v3}, we get
\begin{equation*}
\norm{u^j}_{S^1(\R)} \lesssim 1, \text{ for }  1\le j \le k,
\end{equation*}
which implies (\ref{eq6.93v3}).

  We now turn to \eqref{eq6.94v3}.
By
\begin{align*}
\left|\Big|\sum\limits_{1\le j \le k} u_n^j \Big|^q - \sum\limits_{1\le j \le k} \Big|u_n^j \Big|^q \right| \le C_{k,q}
\sum\limits_{1\le j\ne m \le k} |u_n^j|^{q-1} |u_n^m|, \ 1 < q < \infty,
\end{align*}
we have
\begin{align}
\normo{\sum\limits_{1 \le j \le k} u_n^j}_{L_{t,x}^\frac{(d+2)(p-1)}2(\R\times \R^d)}^\frac{(d+2)(p-1)}2
\le  & \sum\limits_{1\le j \le k} \normo{u_n^j}_{L_{t,x}^\frac{(d+2)(p-1)}2(\R\times \R^d)}^\frac{(d+2)(p-1)}2  \nonumber \\
&     +  C_k \sum\limits_{1 \le j \ne m \le k}   \int\int_{\R\times \R^d} |u_n^j|^{\frac{(d+2)(p-1)}2-1} |u_n^m| \,\mathrm{d}x\mathrm{d}t. \label{eq6.111v3}
\end{align}
We see by \eqref{eq6.93v3} and Lemma \ref{le6.8v3} that
\begin{equation}\label{eq6.112v3}
\sum\limits_{1\le j \le k}\norm{u_n^j}_{L_{t,x}^\frac{(d+2)(p-1)}2(\R\times \R^d)}^\frac{(d+2)(p-1)}2 \lesssim \sum\limits_{1\le j\le j_0}
\norm{ u^j}_{S^1(\R)}^\frac{(d+2)(p-1)}2 + \Big(\sum\limits_{j> j_0} \norm{\varphi^j}_{H^1}^2\Big)^\frac{(d+2)(p-1)}4 < \infty.
\end{equation}
Next, we consider the second term on the right side of \eqref{eq6.111v3}.
By the H\"older inequality, \eqref{eq6.48v3} and \eqref{eq6.93v3}, we have
\begin{align}\label{eq6.113v6}
\begin{split}
  \int\int_{\R\times \R^d} |u_n^j|^{\frac{(d+2)(p-1)}2-1} |u_n^m| \,\mathrm{d}x\mathrm{d}t
\lesssim &   \ \norm{u_n^j u_n^m}_{L_{t,x}^\frac{(d+2)(p-1)}4} \norm{u_n^j}_{L_{t,x}^\frac{(d+2)(p-1)}2}^{\frac{(d+2)(p-1)}2 - 2}  \\
\lesssim  &   \ \norm{u_n^j u_n^m}_{L_{t,x}^\frac{(d+2)(p-1)}4}\to 0, \text{ as }  n\to \infty.
\end{split}
\end{align}
Plugging \eqref{eq6.112v3} and \eqref{eq6.113v6} into
\eqref{eq6.111v3}, we obtain that there is $B_1 > 0$ such that
\begin{equation*}
\underset{n\to \infty} \limsup\  \normo{ u_n^{<k}}_{L_{t,x}^\frac{(d+2)(p-1)}2(\R\times \R^d)} \le B_1.
\end{equation*}
Similarly,
we have
\begin{equation*}
 \underset{n\to \infty}\limsup\  \normo{ \jb{\nabla} u_n^{<k}}_{L_{t,x}^\frac{2(d+2)}d(\R\times \R^d)} \le B_1
\end{equation*}
and
\begin{equation*}
\underset{n\to \infty}\limsup\  \normo{ \jb{\nabla} u_n^{<k}}_{L_t^\infty L_x^2(\R\times \R^d)} \le B_1.
\end{equation*}
Thus, we obtain \eqref{eq6.94v3}.
\end{proof}
\begin{lem}[At least one bad profile] \label{le6.10v3}
 Let $j_0$ be as in Lemma \ref{le6.8v3}, then there exists $1 \le j \le j_0$ such that
 \begin{equation*}
 \norm{ u^j}_{L_{t,x}^\frac{2(d+2)}d\cap L_{t,x}^\frac{(d+2)(p-1)}2((T_{min}^j, T_{max}^j) \times \R^d)} = \infty.
 \end{equation*}
\end{lem}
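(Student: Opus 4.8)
The plan is to argue by contradiction. Suppose instead that $\norm{u^j}_{L_{t,x}^{2(d+2)/d}\cap L_{t,x}^{(d+2)(p-1)/2}((T_{min}^j,T_{max}^j)\times\R^d)}<\infty$ for every $1\le j\le j_0$. Together with Lemma \ref{le6.8v3}, which takes care of the tail $j>j_0$, this means that \emph{every} nonlinear profile has finite $K$-norm, so the hypothesis of Lemma \ref{le6.9v3} holds for all $k\in\N$; fix the resulting constants $A_1$ (a uniform bound on $\norm{w_n}_{L_t^\infty H_x^1}$ from \eqref{eq6.94v3} and \eqref{eq6.57v3}) and $B$ (from \eqref{eq6.94v3}), both independent of $k$ and $n$. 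Set
\[ w_n := u_n^{< k} + e^{it\De}w_n^k. \]
I will show that $w_n$ is a good approximate solution to \eqref{eq1.1} on $\R\times\R^d$ once $k$ and then $n$ are chosen large, and then invoke the long-time perturbation Proposition \ref{pr4.3} to conclude that $u_n$ is global with $\norm{u_n}_{K(\R)}$ bounded uniformly in $n$, contradicting \eqref{eq5.5}.

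For the remaining inputs of Proposition \ref{pr4.3}: by Lemma \ref{le6.9v3} and \eqref{eq6.57v3}, $\norm{w_n}_{K(\R)}\le B$ for $n$ large (up to harmless constants), where the $L_{t,x}^{(d+2)(p-1)/2}$-part of $e^{it\De}w_n^k$ is estimated by interpolating the Strichartz-small quantity $\norm{\jb{\na}e^{it\De}w_n^k}_{L_{t,x}^{2(d+2)/d}}$ against $\norm{\jb{\na}e^{it\De}w_n^k}_{L_t^\infty L_x^2}$, exactly as in the proof of Lemma \ref{le3.3v11}. For the closeness of the data, putting $t=0$ in \eqref{eq6.51v3} gives
\[ u_n(0)-w_n(0)=\sum_{j=1}^k e^{i\theta_n^j}\big(e^{-it_n^j\De}\varphi^j-u^j(-t_n^j)\big)(\cdot-x_n^j), \]
so by \eqref{eq6.65v3}, $\norm{u_n(0)-w_n(0)}_{H^1}\to0$ as $n\to\infty$ for each fixed $k$; in particular this is $\le A_2:=1$ and, by the Strichartz estimate, $\norm{\jb{\na}e^{it\De}(u_n(0)-w_n(0))}_{L_{t,x}^{2(d+2)/d}}=o_n(1)$. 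Since $A_1,A_2,B$ are now fixed, $\delta_1=\delta_1(A_1,A_2,B)>0$ is a fixed constant.

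The crux is the error estimate. With $G(z)=|z|^{4/d}z-|z|^{p-1}z$, and using $i\p_t u_n^{< k}+\De u_n^{< k}=\sum_{j=1}^k G(u_n^j)$ and $(i\p_t+\De)e^{it\De}w_n^k=0$, the error is
\[ e_n^k=\sum_{j=1}^k G(u_n^j)-G\Big(\textstyle\sum_{j=1}^k u_n^j+e^{it\De}w_n^k\Big) = \Big(\textstyle\sum_{j=1}^k G(u_n^j)-G(u_n^{< k})\Big)+\Big(G(u_n^{< k})-G(u_n^{< k}+e^{it\De}w_n^k)\Big), \]
and I must show $\limsup_n\norm{\jb{\na}e_n^k}_{L_{t,x}^{2(d+2)/(d+4)}(\R\times\R^d)}\le\delta_1$ for suitable $k$. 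For the first bracket, the pointwise bound on $\big|G(\sum_j u_n^j)-\sum_j G(u_n^j)\big|$ and its gradient by sums of cross terms of the form $|u_n^j|^a|u_n^m|^b$ (with $j\ne m$ and at most one derivative distributed among the factors), combined with the asymptotic orthogonality \eqref{eq6.48v3}, the bounds \eqref{eq6.93v3} and Lemma \ref{le6.8v3}, shows this bracket $\to0$ as $n\to\infty$ for each fixed $k$, in the spirit of \eqref{eq6.113v6}. For the second bracket, H\"older together with the fractional Leibniz/chain rule for $\jb{\na}$ applied to the mass-critical and energy-subcritical nonlinearities reduces it to finitely many products of $\jb{\na}$-Strichartz norms in which $e^{it\De}w_n^k$ occurs with a positive power; since $\norm{\jb{\na}e^{it\De}w_n^k}_{L_{t,x}^{2(d+2)/d}}$ — and hence, by interpolation, $\norm{e^{it\De}w_n^k}_{L_{t,x}^{(d+2)(p-1)/2}}$ — is $o_k(1)$ uniformly in large $n$, while the other factors are bounded by $A_1$ and $B$, this bracket is $o_k(1)$. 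Hence: choose $k$ large so that the second bracket is $<\delta_1/2$ for all large $n$, then $n$ large so that the first bracket is $<\delta_1/2$ and $\norm{u_n(0)-w_n(0)}_{H^1}<A_2$. Applying Proposition \ref{pr4.3} on any compact $I\ni0$ (with the uniform constants $A_1,A_2=1,B$) produces a genuine solution with data $u_n(0)$ at $t=0$ and $\norm{u_n-w_n}_{K(I)}\le C(A_1,A_2,B)\delta^\alpha$; by unconditional uniqueness (Proposition \ref{th4.1}) this solution is $u_n$, and letting $I$ exhaust $\R$ gives $I_n=\R$ and $\norm{u_n}_{K(\R)}\le C(A_1,A_2,B)$, which contradicts \eqref{eq5.5}.

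The main obstacle I anticipate is precisely the second step above — propagating the derivative $\jb{\na}$ through the combined nonlinearity in the error term and checking that both the cross interactions between distinct nonlinear profiles and the interactions between $u_n^{< k}$ and the linear remainder vanish in the correct order of limits. The key enabling fact is the strengthened remainder control \eqref{eq6.50v3}, which bounds both $e^{it\De}w_n^k$ and $\jb{\na}e^{it\De}w_n^k$ in Strichartz norms — something not provided by the $\dot H^s$ profile decompositions of \cite{Miao-Xu-Zhao1, Miao-Xu-Zhao2}. A minor technical point, the limited smoothness of $z\mapsto|z|^{p-1}z$ when $p<3$, is already absorbed into the perturbation theory of Section 3 and causes no extra difficulty here.
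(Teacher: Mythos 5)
Your proposal is correct and follows essentially the same route as the paper: assume all profiles $1\le j\le j_0$ scatter, use Lemma \ref{le6.8v3} and Lemma \ref{le6.9v3} to get uniform bounds on the approximate solution $u_n^{<k}+e^{it\De}w_n^k$, verify data closeness via \eqref{eq6.65v3} and the error smallness by splitting into the profile cross-terms (handled by the orthogonality \eqref{eq6.48v3} as $n\to\infty$ for fixed $k$) and the remainder interactions (handled by the strengthened control \eqref{eq6.50v3} as $k\to\infty$), and then conclude via the long-time perturbation Proposition \ref{pr4.3}, contradicting \eqref{eq5.5}. The minor points you add (fixing $A_2=1$, invoking unconditional uniqueness, exhausting $\R$ by compact intervals) are harmless refinements of the same argument.
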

\begin{proof}
 We argue by contradiction.
 Assume
\begin{equation*}
\norm{ u^j}_{L_{t,x}^\frac{2(d+2)}d\cap L_{t,x}^\frac{(d+2)(p-1)}2((T_{min}^j, T_{max}^j) \times \R^d)}< \infty,\ \  \forall\  1 \le j \le j_0.
\end{equation*}
Combining this with Lemma \ref{le6.8v3}, we have
\begin{equation}\label{eq6.126v3}
\norm{ u^j}_{L_{t,x}^\frac{2(d+2)}d\cap L_{t,x}^\frac{(d+2)(p-1)}2((T_{min}^j, T_{max}^j) \times \R^d)} < \infty,\ \ \forall\  j \ge 1.
\end{equation}
This together with Lemma \ref{le6.9v3} implies $u_n^j$ exists globally in time for $j\ge 1$ and hence so does $u_n^{<k}(t) + e^{it\Delta} w_n^k$.

 We now verify $u_n^{<k}(t) + e^{it\Delta} w_n^k$ is an approximate solution to $u_n$ when $n$ and $k$ large enough, then we can use the long time perturbation theory to
give a contradiction.

We see from Lemma \ref{le6.9v3} and \eqref{eq6.57v3} that there exists $A_1, B > 0$ such that
\begin{align}
\underset{n\to \infty }\limsup  \ \norm{\jb{\nabla} (u_n^{<k}(t) + e^{it\Delta} w_n^k)}_{L_t^\infty L_x^2(\R\times \R^d)} \le A_1,\label{eq5.36v12}\\
\underset{n\to \infty }\limsup  \ \norm{u_n^{<k}(t) + e^{it\Delta} w_n^k}_{L_{t,x}^\frac{(d+2)(p-1)}2 \cap L_{t,x}^\frac{2(d+2)}d(\R\times \R^d)} \le B.\label{eq6.128v3}
\end{align}
Moreover, it follows from \eqref{eq6.51v3} with $t=0$ and
(\ref{eq6.65v3}) that
\begin{align}
& \normo{u_n(0) - u_n^{<k}(0) -  w_n^k}_{H^1} \nonumber\\
= & \normo{\sum\limits_{j=1}^k  e^{i\theta_n^j} e^{-it_n^j\Delta} \varphi^j(x-x_n^j) - \sum\limits_{j=1}^k e^{i\theta_n^j} u^j(-t_n^j, x-x_n^j)}_{H^1}\nonumber\\
\le &
 \sum\limits_{j=1}^k \normo{  e^{-i t_n^j \Delta} \varphi^j - u^j(-t_n^j) }_{H^1}\longrightarrow0, \ \ \text{as}\ n\rightarrow\infty. \nonumber
\end{align}
Hence,
\begin{align}
&   \normo{u_n(0) -  u_n^{<k}(0) -  w_n^k}_{H^1} \to 0, \text{ as } n\to \infty.\label{eq6.130v3}
\end{align}
Next, we claim that as $k\to \infty$
\begin{equation} \label{eq6.132v3}
\underset{n\to \infty} \lim\ \normo{\jb{\nabla} \big[(i\partial_t+
\Delta ) ( u_n^{<k}(t) + e^{it\Delta} w_n^k) -
\mathcal{N}(u_n^{<k}(t) + e^{it\Delta} w_n^k)
\big]}_{L_{t,x}^\frac{2(d+2)}{d+4}} \to 0,
\end{equation}
where
$\mathcal{N}(u) =  |u|^\frac4d u - |u|^{p-1} u.$

   Before proving this claim, we remark that \eqref{eq6.132v3} together with the long-time perturbation theory leads to a contradiction.
    Indeed, by \eqref{eq5.36v12},
 \eqref{eq6.128v3}, \eqref{eq6.130v3} and \eqref{eq6.132v3}, we conclude as a consequence of
 Proposition \ref{pr4.3} that
\begin{equation*}
\norm{u_n}_{L_{t,x}^\frac{2(d+2)}d \cap L_{t,x}^\frac{(d+2)(p-1)}2 (\R\times \R^d)} < \infty
\end{equation*}
when $n$ large enough, which contradicts \eqref{eq5.5}. Hence, Lemma \ref{le6.10v3} holds.

It remains to prove the claim \eqref{eq6.132v3}. Note that
\begin{align*}
& (i\partial_t+ \Delta)  (u_n^{<k}(t) + e^{it\Delta} w_n^k) -\mathcal{N}(u_n^{<k}(t) + e^{it\Delta} w_n^k) \\
= &\sum\limits_{j={1}}^k (i\partial_t + \Delta ) u_n^j -
\mathcal{N}(u_n^{<k}(t)) - \mathcal{N}(u_n^{<k}(t) + e^{it\Delta}
w_n^k) +  \mathcal{N}(u_n^{<k}(t)).
\end{align*}
Hence, it suffices  to show that
\begin{align}\label{eq6.140v3}
\underset{n\to \infty} \lim \normo{\jb{\nabla}\bigg(
\sum\limits_{j=1}^k (i\partial_t + \Delta ) u_n^j
 - \mathcal{N}(u_n^{<k}(t))\bigg)}_{L_{t,x}^\frac{2(d+2)}{d+4}} =
 0,\quad \forall~k\in\N,
 \end{align}
 and
 \begin{align}\label{eq6.139v3}
 \underset{n\to \infty} \lim\normo{\jb{\nabla}\big( \mathcal{N}(u_n^{<k}(t) + e^{it\Delta} w_n^k) - \mathcal{N}(u_n^{<k}(t))\big)}_{L_{t,x}^\frac{2(d+2)}{d+4}} \to 0,
\text{ as }  k\to \infty.
\end{align}

First, we show \eqref{eq6.140v3}. Noting that
\begin{align*}
 (i\partial_t + \Delta ) u_n^j = \mathcal{N}(u_n^j),
\end{align*}
and using  \eqref{eq6.48v3} and Lemma \ref{le6.9v3}, we get as $n
\to \infty$
\begin{align*}
  \normo{\jb{\nabla}\Big(\sum\limits_{j=1}^k (i\partial_t + \Delta ) u_n^j - \mathcal{N}( u_n^{<k})\Big)}_{L_{t,x}^\frac{2(d+2)}{d+4}}
=  \normo{\jb{\nabla}\left( \sum\limits_{j=1}^k  \mathcal{N}(u_n^j)
- \mathcal{N}(\sum\limits_{j=1}^k
u_n^j)\right)}_{L_{t,x}^\frac{2(d+2)}{d+4}} \to 0,
\end{align*}
and \eqref{eq6.140v3} follows.

We now turn to \eqref{eq6.139v3}. By the Fundamental Theorem of
Calculus,
\begin{equation}
F(u)-F(v)=(u-v)\int_0^1F_z\big(v+\theta(u-v)\big)d\theta+\overline{(u-v)}\int_0^1F_{\bar
z}\big(v+\theta(u-v)\big)d\theta,
\end{equation}
we obtain
\begin{align}
& \normo{\jb{\nabla} \big( \mathcal{N}(u_n^{<k}+ e^{it\Delta} w_n^k) - \mathcal{N}(u_n^{<k})\big)}_{L_{t,x}^\frac{2(d+2)}{d+4}}\nonumber \\
\lesssim  & \normo{|u_n^{<k}|^{p-1} e^{it\Delta} w_n^k}_{L_{t,x}^\frac{2(d+2)}{d+4}} + \normo{|u_n^{<k}|^\frac4d e^{it\Delta} w_n^k}_{L_{t,x}^\frac{2(d+2)}{d+4}} \label{eq2v3}\\
&+   \normo{e^{it\Delta} w_n^k}_{L_{t,x}^\frac{2(d+2)p}{d+4}}^p   + \normo{e^{it\Delta} w_n^k}_{L_{t,x}^\frac{2(d+2)}{d}}^\frac{d+4}d \label{eq4v3}\\
&+  \normo{|u_n^{<k}|^{p-1} \nabla e^{it\Delta} w_n^k}_{L_{t,x}^\frac{2(d+2)}{d+4}}
+ \normo{|u_n^{<k}|^\frac4d \nabla e^{it\Delta} w_n^k}_{L_{t,x}^\frac{2(d+2)}{d+4}}\label{eq6v3}\\
&+  \normo{|e^{it\Delta} w_n^k|^{p-1} \nabla
u_n^{<k}}_{L_{t,x}^\frac{2(d+2)}{d+4}}
 + \normo{|e^{it\Delta} w_n^k|^\frac4d \nabla u_n^{<k}}_{L_{t,x}^\frac{2(d+2)}{d+4}}\label{eq8v3}\\
&+   \normo{|e^{it\Delta} w_n^k|^{p-1} \nabla e^{it\Delta} w_n^k}_{L_{t,x}^\frac{2(d+2)}{d+4}} + \normo{|e^{it\Delta} w_n^k|^\frac4d \nabla e^{it\Delta} w_n^k}_{L_{t,x}^\frac{2(d+2)}{d+4}}\label{eq10v3}\\
&+   \normo{|u_n^{<k}|^{p-2} e^{it\Delta} w_n^k \nabla
u_n^{<k}}_{L_{t,x}^\frac{2(d+2)}{d+4}} +
\normo{|u_n^{<k}|^\frac{4-d}d  e^{it\Delta} w_n^k \nabla
u_n^{<k}}_{L_{t,x}^\frac{2(d+2)}{d+4}}. \label{eq12v3}
\end{align}
For the terms \eqref{eq4v3}, \eqref{eq10v3}. By the H\"older
inequality and \eqref{eq6.50v3}, we have
\begin{align*}
\eqref{eq4v3}  & \lesssim \normo{|\nabla|^{\frac{d}2- \frac{d+4}{2p}} e^{it\Delta} w_n^k}_{L_t^\frac{2(d+2)p}{d+4} L_x^\frac{2d(d+2)p}{d(d+2)p-2(d+4)}}^p + \normo{e^{it\Delta} w_n^k}_{L_{t,x}^\frac{2(d+2)}{d}}^\frac{d+4}d \\
            & \lesssim \normo{ \jb{\nabla} e^{it\Delta} w_n^k}_{L_t^\frac{2(d+2)p}{d+4} L_x^\frac{2d(d+2)p}{d(d+2)p-2(d+4)}}^p  +  \normo{e^{it\Delta} w_n^k}_{L_{t,x}^\frac{2(d+2)}{d}}^\frac{d+4}d
           \to 0, \text{ as } n\to \infty, \  k\to \infty.\\
\eqref{eq10v3} & \le \Big[\normo{e^{it\Delta} w_n^k}_{L_{t,x}^\frac{(d+2)(p-1)}2}^{p-1}  + \normo{e^{it\Delta} w_n^k}_{L_{t,x}^\frac{2(d+2)}2}^\frac4d \Big]
                \cdot\normo{\nabla e^{it\Delta} w_n^k}_{L_{t,x}^\frac{2(d+2)}d}\\
               & \lesssim \Big[\normo{|\nabla|^{s_p} e^{it\Delta} w_n^k}_{L_{t}^\frac{(d+2)(p-1)}2  L_x^\frac{2d(d+2)(p-1)}{d(d+2)(p-1) - 8}}^{p-1}
                + \normo{e^{it\Delta} w_n^k}_{L_{t,x}^\frac{2(d+2)}2}^\frac4d\Big]\cdot \normo{\nabla e^{it\Delta} w_n^k}_{L_{t,x}^\frac{2(d+2)}d}\\
              & \  \to 0, \text{ as }  n\to \infty,\  k\to \infty.
\end{align*}
By Lemma \ref{le6.9v3} and \eqref{eq6.50v3}, we also have for \eqref{eq8v3} that
\begin{align*}
& \underset{k\to \infty} \lim \underset{n\to \infty} \lim \Big( \normo{|e^{it\Delta} w_n^k|^{p-1} \nabla u_n^{<k}}_{L_{t,x}^\frac{2(d+2)}{d+4}}
+ \normo{|e^{it\Delta} w_n^k|^\frac4d  \nabla u_n^{<k}}_{L_{t,x}^\frac{2(d+2)}{d+4}}\Big)\\
\le &\  \underset{k\to \infty} \lim \underset{n\to \infty} \lim
\Big[ \normo{e^{it\Delta} w_n^k}_{L_{t,x}^\frac{(d+2)(p-1)}2}^{p-1}
 +    \normo{e^{it\Delta} w_n^k}_{L_{t,x}^\frac{2(d+2)}d}^\frac4d \Big]\cdot\normo{\jb{\nabla} u_n^{<k}}_{L_{t,x}^\frac{2(d+2)}d}   = 0.
\end{align*}
We now consider the terms of the form
\begin{equation*}
\normo{| u_n^{<k}|^q |\nabla|^s e^{it\Delta}
w_n^k}_{L_{t,x}^\frac{2(d+2)}{d+4}}, \ 1 + \frac4d \le q \le 1 +
\frac4{d-2},\  s = 0, 1,
\end{equation*}
which corresponds to \eqref{eq2v3}, \eqref{eq6v3}.

By the H\"older inequality, \eqref{eq6.50v3}, \eqref{eq6.57v3}, \eqref{eq6.94v3}, we have
\begin{align*}
  \normo{\big|u_n^{<k} \big|^{q-1} |\nabla|^s e^{it\Delta} w_n^k}_{L_{t,x}^\frac{2(d+2)}{d+4}}
\lesssim &   \normo{\big| u_n^{<k}\big|^{q-1} |\nabla|^s e^{it\Delta} w_n^k}_{L_{t,x}^\frac{2(d+2)}{d+4}}\\
\lesssim &  \normo{  u_n^{<k}}_{L_{t,x}^\frac{(d+2)(q-1)}2}^{q-1} \normo{|\nabla|^s e^{it\Delta} w_n^k}_{L_{t,x}^\frac{2(d+2)}d}\\
\lesssim  & \normo{  u_n^{<k}}_{L_{t,x}^\frac{(d+2)(q-1)}2}^{q-1}
\normo{|\nabla|^s e^{it\Delta} w_n^k}_{L_{t,x}^\frac{2(d+2)}d}\\
 \to & \ 0, \text{ as } n\to \infty, \ k\to \infty.
 \end{align*}
Thus, we obtain for $1 + \frac4d \le q \le 1+ \frac4{d-2}$, $s = 0,\, 1$,
\begin{align*}
\underset{n\to \infty} \lim \normo{|  u_n^{<k}|^{q-1} |\nabla|^s
e^{it\Delta} w_n^k}_{L_{t,x}^\frac{2(d+2)}{d+4}} \to 0, \text{ as }
k\to \infty.
\end{align*}
We now estimate \eqref{eq12v3}. For $q >2$, we have
\begin{align*}
& \normo{|  u_n^{<k}|^{q-2} e^{it\Delta} w_n^k \nabla u_n^{<k}}_{L_{t,x}^\frac{2(d+2)}{d+4}}\\
\le &  \normo{e^{it\Delta} w_n^k}_{L_{t,x}^\frac{(d+2)(q-1)}2} \normo{\nabla  u_n^{<k}}_{L_{t,x}^\frac{2(d+2)}d} \normo{ u_n^{<k}}_{L_{t,x}^\frac{(d+2)(q-1)}2}^{q-2}\\
\lesssim  & \normo{|\nabla|^{s_q} e^{it\Delta}
w_n^k}_{L_{t}^\frac{(d+2)(q-1)}2
L_x^\frac{2(d+2)d(q-1)}{d(d+2)(q-1)-8}} \normo{\nabla
u_n^{<k})}_{L_{t,x}^\frac{2(d+2)}d} \normo{\nabla
u_n^{<k}}_{L_{t,x}^\frac{(d+2)(q-1)}2}^{q-2} \to 0,
\end{align*}
as $n \to \infty, \ k\to \infty$. Thus \eqref{eq6.139v3} follows.
\end{proof}
We can now show the main result in this section:
\begin{prop}[Existence of the critical element] \label{pr5.8}
Suppose $m_\om^* < m_\om$, then there exists a global solution $u_c\in C(\R, H^1(\R^d))$ to \eqref{eq1.1} such that
\begin{align}
& u_c(t) \in A_{\omega, +}  \text{ and } \cS_\om(u_c(t)) =  m_\om^*,\text{    for }  t\in \R, \label{eq6.160v3}\\
& \|u_c\|_{L_{t,x}^\frac{(d+2)(p-1)}2 \cap L_{t,x}^\frac{2(d+2)}d([0,\infty)\times \R^d)} = \|u_c\|_{L_{t,x}^\frac{(d+2)(p-1)}2 \cap L_{t,x}^\frac{2(d+2)}d((-\infty,0]\times \R^d)} = \infty. \label{eq6.161v3}
\end{align}
\end{prop}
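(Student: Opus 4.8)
\emph{Proposal.} The plan is to run the Kenig--Merle concentration--compactness scheme on the sequence $\{u_n\}$ from \eqref{eq5.4}--\eqref{eq5.5}, built on the linear profile decomposition of Theorem~\ref{le5.2} applied to $\{u_n(0)\}$ (already recorded in \eqref{eq6.51v3}) together with Lemmas~\ref{le6.8v3}--\ref{le6.10v3}. The first step is to upgrade the decoupling identities \eqref{eq6.52v3}--\eqref{eq6.55v3} into a splitting of $m_\om^*$ into \emph{nonnegative} contributions. Since $\cH_\om\ge 0$ always and $\cH_\om(u_n(0))\le\cS_\om(u_n(0))\to m_\om^*<m_\om$, for $n$ large each $\cH_\om(e^{-it_n^j\De}\varphi^j)$ and $\cH_\om(w_n^k)$ is $<m_\om$; the sharp variational dichotomy of Proposition~\ref{le2.4} (namely $\K<0\Rightarrow\cH_\om\ge m_\om$) then forces $\K(e^{-it_n^j\De}\varphi^j)\ge0$ and $\K(w_n^k)\ge0$, so $e^{-it_n^j\De}\varphi^j$ and $w_n^k$ lie in $A_{\om,+}$ for $n$ large and $\cS_\om\ge\cH_\om\ge 0$ on each of them. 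Thus \eqref{eq6.54v3} exhibits $m_\om^*$ as the limit of a sum of nonnegative terms.

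The second step shows that exactly one profile is nonzero and the remainder vanishes in $H^1$. Every nonzero $\psi\in A_{\om,+}$ satisfies $\cS_\om(\psi)\ge\cH_\om(\psi)\ge\tfrac\om2\|\psi\|_{L^2}^2>0$, and $\|e^{-it_n^j\De}\varphi^j\|_{L^2}=\|\varphi^j\|_{L^2}$ is conserved, so, using \eqref{eq6.65v3} and conservation of $\cS_\om$, the limit $s_j:=\lim_n\cS_\om(e^{-it_n^j\De}\varphi^j)=\cS_\om(u^j)$ exists, $s_j>0$ iff $\varphi^j\ne0$, and for every $k$ one has $\sum_{j=1}^k s_j+\lim_n\cS_\om(w_n^k)=m_\om^*$ with all summands $\ge0$. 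If more than one $\varphi^j$ were nonzero, or one were nonzero while $\lim_n\cS_\om(w_n^k)>0$ for some $k$, then $\cS_\om(u^j)=s_j\le m_\om^*-c<m_\om^*$ for a fixed $c>0$ and every $j$; since $u^j(t)\in A_{\om,+}$ by Proposition~\ref{le2.9} and the profiles $j>j_0$ are already global and small by Lemma~\ref{le6.8v3}, the definitions \eqref{eq5.1}--\eqref{eq5.2} would give $\|u^j\|_{L_{t,x}^\frac{2(d+2)}d\cap L_{t,x}^\frac{(d+2)(p-1)}2}\le\Lambda_\om(m_\om^*-c)<\infty$ for all $1\le j\le j_0$, contradicting Lemma~\ref{le6.10v3}. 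Hence there is a single nonzero profile $\varphi:=\varphi^1$, $\cS_\om(u^1)=m_\om^*$ and $\cS_\om(w_n^1)\to0$; a direct computation from $\cS_\om=\cH_\om+\tfrac12\K$, $\K(w_n^1)\ge0$ and the explicit forms of $\cH_\om$ and $\K$ then yields $\|w_n^1\|_{H^1}\to0$, so $u_n(0)=e^{i\theta_n^1}e^{-it_n^1\De}\varphi(\cdot-x_n^1)+r_n$ with $\|r_n\|_{H^1}\to0$ and $\cS_\om(e^{-it_n^1\De}\varphi)\to m_\om^*$.

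The third step locates $t_\infty^1$ and produces $u_c$. If $t_n^1\to+\infty$, then for $t\le0$ we have $e^{it\De}u_n(0)=e^{i\theta_n^1}e^{i(t-t_n^1)\De}\varphi(\cdot-x_n^1)+e^{it\De}r_n$, and since $\|\jb{\nabla}e^{is\De}\varphi\|_{L_{s,x}^\frac{2(d+2)}d(\R\times\R^d)}<\infty$ while the window $\{s\le-t_n^1\}$ escapes to $-\infty$, and $\|\jb{\nabla}e^{it\De}r_n\|_{L_{t,x}^\frac{2(d+2)}d}\lesssim\|r_n\|_{H^1}\to0$, we get $\|\jb{\nabla}e^{it\De}u_n(0)\|_{L_{t,x}^\frac{2(d+2)}d((-\infty,0]\times\R^d)}\to0$; the small-data part of Proposition~\ref{th4.1}(i) then forces $\inf I_n=-\infty$ and $\|u_n\|_{K((-\infty,0])}<\infty$ for $n$ large, contradicting \eqref{eq5.5}, and $t_n^1\to-\infty$ is symmetric on $[0,\infty)$. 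Passing to a subsequence with $t_n^1\to t_\infty^1\in\R$ and $\theta_n^1\to\theta_\infty$, spatially translating $u_n$ by $x_n^1$ and absorbing $t_\infty^1,\theta_\infty$ into the profile (operations preserving \eqref{eq1.1}, the $K$-norm, $A_{\om,+}$ and $\cS_\om$), we may assume $u_n(0)\to\phi_c$ in $H^1(\R^d)$ with $\cS_\om(\phi_c)=m_\om^*\in(0,m_\om)$ and $\K(\phi_c)\ge0$, hence $\phi_c\in A_{\om,+}\setminus\{0\}$. Let $u_c$ be the maximal-lifespan solution of \eqref{eq1.1} with $u_c(0)=\phi_c$; Proposition~\ref{le2.9} and conservation give $u_c(t)\in A_{\om,+}$ and $\cS_\om(u_c(t))=m_\om^*$ on $I_{max}^c$, while Lemma~\ref{le2.8} bounds $\|u_c(t)\|_{H^1}$, so with the local theory $I_{max}^c=\R$ in the energy-subcritical cases, the case $p=1+\tfrac4{d-2}$ being reduced to these as in the Remark of Section~1. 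Finally, if $\|u_c\|_{K([0,\infty))}<\infty$, then applying the long-time perturbation Proposition~\ref{pr4.3} with $w=u_c$, $e=0$ and $u_n(0)\to u_c(0)$ would give $\|u_n\|_{K([0,\infty))}\le C$ for $n$ large, contradicting \eqref{eq5.5}; the backward half is identical, which yields \eqref{eq6.161v3}, and \eqref{eq6.160v3} has been established along the way.

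The main obstacle is the second step: the single-profile reduction hinges on first placing every profile and every remainder inside $A_{\om,+}$ through Proposition~\ref{le2.4}, so that the conserved functional $\cS_\om$ genuinely splits into nonnegative pieces, and on the fact that the refined remainder control of Theorem~\ref{le5.2} makes $e^{it\De}w_n^k$ an admissible error for the $S^1$-based perturbation theory of Proposition~\ref{pr4.3}.
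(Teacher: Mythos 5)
Your first two steps coincide with the paper's own argument: the positivity of $\cH_\om$ plus Proposition \ref{le2.4} to place every linear profile and every remainder in $A_{\om,+}$ (so that \eqref{eq6.54v3} splits $m_\om^*$ into nonnegative pieces), and the definition of $\Lambda_\om$, Lemma \ref{le6.9v3} and Lemma \ref{le6.10v3} to force a single nonzero profile, $\cS_\om(u^1)=m_\om^*$, and $\|w_n^1\|_{H^1}\to0$; this is exactly \eqref{eq6.169v3}--\eqref{eq6.183v3}. Your third step then deviates: you exclude $t_n^1\to\pm\infty$ by the escaping-window/small-data argument and take $u_c$ to be the solution with the strong $H^1$ limit of $u_n(0)$ as data, and you obtain \eqref{eq6.161v3} by long-time perturbation against $u_n$. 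Those two pieces are correct (the paper instead keeps the nonlinear profile $u^1$, so it never needs to rule out $t_\infty^1=\pm\infty$ at this stage), and your derivation of \eqref{eq6.161v3} is a legitimate variant.

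The genuine gap is the globality of $u_c$, which is part of the statement. Your argument is ``$u_c(t)\in A_{\om,+}$, hence a uniform $H^1$ bound by Lemma \ref{le2.8}, hence $I_{max}=\R$ by the local theory,'' with the case $p=1+\frac4{d-2}$ ``reduced'' via the Remark of Section 1. The first part is fine only when $p<1+\frac4{d-2}$ (and even there it uses a local theory with existence time depending only on $\|u_0\|_{H^1}$, which is standard but not what Proposition \ref{th4.1} states -- its blow-up criterion is in terms of spacetime norms). For $p=1+\frac4{d-2}$, $d=3,4$, which Proposition \ref{pr5.8} must cover (Theorem \ref{th1.4}(2) is proved through the same machinery after shrinking $\om$), an a priori $H^1$ bound does \emph{not} preclude finite-time blow-up: that is precisely the energy-critical difficulty, and the Remark of Section 1 only converts the threshold $A_+$ into $A_{\om,+}$; it does not make the nonlinearity subcritical or change the local theory. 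The paper closes this by a second run of the concentration--compactness argument at a putative finite endpoint: assuming $T_{max}^1<\infty$, it applies the same profile analysis to $\tilde u_n(t)=u^1(t+t_n)$ with $t_n\nearrow T_{max}^1$, obtaining $\|u^1(t_n,\cdot)-e^{-i\tau_n\Delta}\psi(\cdot-y_n)\|_{H^1}\to0$, and then shows $\normo{\jb{\nabla}e^{it\Delta}u^1(t_n)}_{L_{t,x}^{\frac{2(d+2)}d}}$ is small on a suitable interval (a compact interval via the dispersive estimate when $\tau_\infty=\pm\infty$, a short interval around $\tau_\infty$ when $\tau_\infty\in\R$), so Proposition \ref{th4.1} continues the solution past $T_{max}^1$, a contradiction; this argument is insensitive to whether $p$ is critical. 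You need this (or an equivalent criticality-safe continuation argument) in place of the ``bounded $H^1$ implies global'' step.
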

\begin{proof}
By Lemma \ref{le6.8v3}, Lemma \ref{le6.10v3} and reordering indices, there exists $J\le j_0$ that
\begin{equation}\label{eq6.164v3}
\begin{cases}
\norm{u^j}_{L_{t,x}^\frac{(d+2)(p-1)}2 \cap L_{t,x}^\frac{2(d+2)}d((T_{min}^j, T_{max}^j)\times \R^d)} = \infty,  & \text{ for } 1\le j \le J,\\
\norm{ u^j}_{L_{t,x}^\frac{(d+2)(p-1)}2 \cap L_{t,x}^\frac{2(d+2)}d((T_{min}^j, T_{max}^j)\times \R^d)} < \infty,  & \text{ for }  j \ge J.
\end{cases}
\end{equation}
From \eqref{eq6.52v3}, \eqref{eq6.54v3} and \eqref{eq6.55v3},
we have
\begin{align}
& \normo{\jb{\nabla} u_n(0)}_{L^2}^2 - \sum\limits_{j=1}^J \normo{\jb{\nabla} \varphi^j}_{L^2}^2 - \normo{\jb{\nabla} w_n^J}^2 \to 0, 
\nonumber \\
& \cS_\omega(u_n(0)) - \sum\limits_{j=1}^J \cS_\omega(e^{-it_n^j \Delta} \varphi^j) - \cS_\omega(w_n^J) \to 0, \label{eq6.169v3} \\
& \cH_\omega(u_n(0)) - \sum\limits_{j=1}^J \cH_\omega(e^{-it_n^j \Delta} \varphi^j) - \cH_\omega(w_n^J) \to 0,\label{eq6.170v3}
 \text{ as  }   n \to \infty.
\end{align}
 Since $K(u_n(0)) \ge 0$, we have $\cH_\omega(u_n(0)) \le \cS_\omega(u_n(0))$ by \eqref{eq2.3new}. It follows from \eqref{eq6.170v3} and \eqref{eq5.4} that
\begin{equation*}
\cH_\omega(e^{-it_n^j \Delta} \varphi^j),\  \cH_\omega(w_n^J) \le
\cS_\omega(u_n(0)) < \tfrac{m_\omega + m_\omega^*}2,
\end{equation*}
for $1\le j \le J$ and $n$ large enough.

   By \eqref{eq2.4}, we have
\begin{equation}\label{eq6.172v3}
\K(e^{-it_n^j \Delta} \varphi^j) > 0,\  \K(w_n^J) > 0, \text{  for } 1\le j \le J \text{ and $n$ large enough,}
\end{equation}
which together with Lemma \ref{le2.6} shows
\begin{align}\label{eq4.158v6}
   \cS_\omega(e^{-it_n^j \Delta} \varphi^j) \ge  0, \ \cS_\omega( w_n^J ) \ge 0,
\end{align}
for $ 1 \le j \le J$ and $n$ large enough.

 We shall show $J = 1$. Assume for a contradiction that $J\ge 2$. Then, it follows from \eqref{eq5.4} and \eqref{eq6.169v3} that
\begin{equation*}
\underset{n\to \infty} \limsup \ \cS_\omega(e^{-it_n^j \Delta} \varphi^j) < m_\omega^*, \ \forall \,1 \le j \le J,
\end{equation*}
which together with \eqref{eq6.65v3} shows
\begin{equation*} \label{eq6.175v3}
\cS_\omega(u^j(t)) < m_\omega^*, \ \forall\, 1 \le j \le J, \ t \in I^j.
\end{equation*}
Since $u^j$ is a solution to \eqref{eq1.1}, it follows from the definition of $m_\omega^*$ that
\begin{equation*}
\norm{u^j}_{L_{t,x}^\frac{(d+2)(p-1)}2 \cap L_{t,x}^\frac{2(d+2)}d((T_{min}^j, T_{max}^j) \times \R^d)} < \infty, \ \forall \,1 \le j \le J.
\end{equation*}
This contradicts \eqref{eq6.164v3}.
Thus, we have $J = 1$.

  Since $\norm{u^1}_{L_{t,x}^\frac{(d+2)(p-1)}2 \cap L_{t,x}^\frac{2(d+2)}d ( (T_{min}^1, T_{max}^1) \times \R^d)} = \infty$, we have
\begin{equation}\label{eq6.177v3}
\cS_\omega(u^1(t)) \ge m_\omega^*, \text{ for }  t \in (T_{min}^1, T_{max}^1) .
\end{equation}
On the other hand,
by \eqref{eq6.65v3}, \eqref{eq6.169v3}, \eqref{eq4.158v6}, we get
\begin{equation*}
\cS_\omega(u^1(t)) \le m_\omega^*, \text{ for }  t \in (T_{min}^1,
T_{max}^1).
\end{equation*}
Combining this with \eqref{eq6.177v3}, we obtain
\begin{equation}\label{eq6.179v3}
\cS_\omega(u^1(t)) = m_\omega^*, \text{ for }  t \in (T_{min}^1, T_{max}^1) .
\end{equation}
By \eqref{eq6.65v3}, we have
\begin{equation*}
\cS_\omega(u^1(t)) = \underset{n\to \infty} \lim \cS_\omega(e^{-it_n^1 \Delta} \varphi^1),
\end{equation*}
this together with \eqref{eq5.4}, \eqref{eq6.169v3} and \eqref{eq6.179v3} shows
\begin{equation}\label{eq6.181v3}
\cS_\omega(w_n^1) \to 0, \text{ as } n \to \infty.
\end{equation}
Hence, Lemma \ref{le2.6} together with \eqref{eq6.172v3} and \eqref{eq6.181v3} shows
\begin{equation}\label{eq6.182v3}
\norm{w_n^1}_{H^1} \to 0, \text{ as }  n \to \infty.
\end{equation}
We see from \eqref{eq6.51v3}, \eqref{eq6.182v3} that
\begin{equation}\label{eq6.183v3}
\norm{u_n(0,x) - e^{-it_n^1 \Delta} \varphi^1(x-x_n^j)}_{H^1} \to 0, \text{ as }  n \to \infty.
\end{equation}
Now, we shall show $T_{min}^1 = -\infty,\  T_{max}^1 = \infty$.
Assume for a contradiction that $T_{max}^1 < \infty$. Let $\{t_n\}$
be a sequence in $(T_{min}^1, T_{max}^1) $ such that $t_n \nearrow
T_{max}^1$ and put
\begin{align*}
\tilde{u}_n(t) = u^1(t+t_n) \quad \text{  and  } \tilde{I}_n =(T_{min}^1-t_n, T_{max}^1-t_n).
\end{align*}
We see that $\{\tilde{u}_n\}$ satisfies
\begin{align*}
& \tilde{u}_n(t) \in A_{\omega, + } \text{  for }  t \in \tilde{I}_n, \text{ and }  \cS_\omega(\tilde{u}_n) =  m_\omega^*,\\
& \norm{\tilde{u}_n}_{L_{t,x}^\frac{(d+2)(p-1)}2 \cap L_{t,x}^\frac{2(d+2)}d(\tilde{I}_n\times \R^d)} = \infty.
\end{align*}
Then, we can apply the above argument as deriving \eqref{eq6.183v3} to this sequence and find
 that there exists a non-trivial $\psi \in H^1(\R^d)$, a sequence $\{\tau_n\}$ with $\tau_\infty = \underset{n\to \infty} \lim \tau_n \in [-\infty, \infty]$,
 $y_n\in \R^d$ such that
\begin{align*}
\underset{n\to \infty} \lim\norm{u^1(t_n,x) - e^{-i\tau_n \Delta} \psi(x-y_n)}_{H^1}
= \underset{n\to \infty} \lim \norm{\tilde{u}_n(0,x)- e^{-i\tau_n \Delta} \psi(x-y_n)}_{H^1}
= 0.
\end{align*}
This together with the Strichartz estimate yields
\begin{equation}\label{eq6.188v3}
\normo{\jb{\nabla} ( e^{it\Delta} u^1(t_n,x) - e^{i(t-\tau_n)\Delta} \psi(x-y_n)) }_{L_{t,x}^\frac{2(d+2)}d(\R\times \R^d)} \to 0, \text{  as  } n \to \infty.
\end{equation}

{\bf Case 1.}
$\tau_\infty  = \pm \infty$. By the dispersive estimate for the free solution, for any compact interval $I$, we have
\begin{equation*}
\normo{\jb{\nabla} e^{i(t-\tau_n)\Delta} \psi}_{L_{t,x}^\frac{2(d+2)}d(I\times \R^d)} \to 0, \text{ as  } n \to \infty,
\end{equation*}
this together with \eqref{eq6.188v3} yields
\begin{equation}\label{eq6.190v3}
\normo{\jb{\nabla} e^{it\Delta} u^1(t_n)}_{L_{t,x}^\frac{2(d+2)}d(I\times \R^d)} \to 0, \text{  as } n \to \infty.
\end{equation}

{\bf Case 2.}  $\tau_\infty \in \R$. For any interval $I$ with
$\tau_\infty\in I$ and $|I|\ll 1$, we have by \eqref{eq6.188v3},
\begin{align}\label{eq6.191v3}
\underset{n\to \infty} \lim \normo{\jb{\nabla} e^{it\Delta} u^1(t_n)}_{L_{t,x}^\frac{2(d+2)}d(I\times \R^d)}
=   \normo{\jb{\nabla} e^{i(t-\tau_\infty)\Delta} \psi}_{L_{t,x}^\frac{2(d+2)}d(I\times \R^d)}\ll 1.
\end{align}
Then, Proposition \ref{th4.1} together with \eqref{eq6.190v3},
\eqref{eq6.191v3} implies that $u^1$ exists beyond $T_{max}^1$,
which is a contradiction. Thus, $T_{max}^1 = \infty$. Similarly, we
have $T_{min}^1 = -\infty$.

Therefore, $u^1$ is a global solution and it is just the desired
critical element $u_c$ satisfying \eqref{eq6.160v3} and
\eqref{eq6.161v3}.
\end{proof}
We now show the trajectory of the critical element is precompact in the energy space $H^1(\R^d)$ modulo spatial translations.
\begin{prop}[Compactness of the critical element]\label{pr5.9}
  Let $u_c$ be the critical element in Proposition \ref{pr5.8}, then there exists $x(t): \R \to \R^d$ such that $\{ u_c(t,x-x(t)): t\in \R\}$ is precompact in $H^1(\R^d)$.
\end{prop}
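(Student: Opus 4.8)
The plan is to argue in the Kenig--Merle spirit: fix an arbitrary sequence of times $\{t_n\}\subset\R$, set $\phi_n:=u_c(t_n)$, which is bounded in $H^1(\R^d)$ by Lemma~\ref{le2.8}, and apply the linear profile decomposition of Theorem~\ref{le5.2} to a subsequence of $\{\phi_n\}$; this produces profiles $\varphi^j$, parameters $(\theta_n^j,t_n^j,x_n^j)$ with $t_n^j\to t_\infty^j\in[-\infty,\infty]$ and $|t_n^j-t_n^m|+|x_n^j-x_n^m|\to\infty$ for $j\ne m$, and remainders $w_n^k$ satisfying \eqref{eq2.15v3new} together with the decoupling of $\|\cdot\|_{H^1}^2$, $\cS_\om$, $\cH_\om$ and $\K$. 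Since $u_c(t)\in A_{\om,+}$ we have $\K(\phi_n)\ge0$, and exactly as in the proof of Proposition~\ref{pr5.8} (using the $\K$-decoupling, the characterization \eqref{eq2.4} of $m_\om$, the standing assumption $m_\om^*<m_\om$, and the lower bound of Lemma~\ref{le2.6}) one gets, for $n$ large, $\K(e^{-it_n^j\Delta}\varphi^j)>0$ and $\K(w_n^k)>0$, hence $\cS_\om(e^{-it_n^j\Delta}\varphi^j)\ge0$, $\cS_\om(w_n^k)\ge0$, and therefore $\sum_{j=1}^k\cS_\om(e^{-it_n^j\Delta}\varphi^j)+\cS_\om(w_n^k)\to m_\om^*$ with all summands nonnegative. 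Note also that, since $u_c$ is global with $\|u_c\|_{K((-\infty,0])}=\|u_c\|_{K([0,\infty))}=\infty$ by \eqref{eq6.161v3}, the blow-up criterion Proposition~\ref{th4.1}(iv) forces $\|u_c\|_{K((-\infty,T])}=\|u_c\|_{K([T,\infty))}=\infty$ for \emph{every} $T\in\R$.

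The first step is to show that there is exactly one nonzero profile and that $\|w_n^1\|_{H^1}\to0$. Lemma~\ref{le2.6} gives $\cS_\om(\fy)\gtrsim\|\fy\|_{H^1}^2$ (with constant depending on $\om$) whenever $\K(\fy)\ge0$, so, the free flow being unitary on $H^1$, a nonzero profile contributes a fixed positive amount to the sum above. Hence, if there were a second nonzero profile, or if $\cS_\om(w_n^1)$ did not tend to $0$ (after passing to a further subsequence), then every nonlinear profile $u^j$ defined by \eqref{eq6.65v3} would satisfy $\cS_\om(u^j)\le m_\om^*-c$ for some $c>0$; together with the energy-gap property (a nonzero solution with $\cS_\om<m_\om$ can never have $\K=0$, so $t\mapsto\K(u^j(t))$ keeps a constant sign, which is $+$ because $\K(e^{-it_n^j\Delta}\varphi^j)>0$) this gives $u^j\in A_{\om,+}$, and hence $\|u^j\|_{K((T_{min}^j,T_{max}^j)\times\R^d)}<\infty$ by the definition of $m_\om^*$. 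Combining this with the smallness \eqref{eq2.15v3new} of the free evolution of $w_n^k$ for $k$ large, the approximation argument in the proof of Lemma~\ref{le6.10v3}, i.e.\ the nonlinear profile decomposition plus the long-time perturbation Proposition~\ref{pr4.3}, yields $\|u_c(\cdot+t_n)\|_{K(\R)}<\infty$ for $n$ large, which contradicts $\|u_c\|_{K([t_n,\infty))}=\infty$. Therefore $J=1$ and $\cS_\om(w_n^1)\to0$; since $\K(w_n^1)>0$, Lemma~\ref{le2.6} upgrades this to $\|w_n^1\|_{H^1}\to0$, while $\varphi^1\ne0$ and $\cS_\om(e^{-it_n^1\Delta}\varphi^1)\to m_\om^*$.

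The second step is to exclude $t_\infty^1=\pm\infty$. Suppose $t_n^1\to+\infty$. On $(-\infty,0]$ we have $e^{it\Delta}\phi_n=e^{i\theta_n^1}\big(e^{i(t-t_n^1)\Delta}\varphi^1\big)(\cdot-x_n^1)+e^{it\Delta}w_n^1$; the change of variables $s=t-t_n^1$ and translation invariance show that the $L_{t,x}^{\frac{2(d+2)}{d}}$-norm of $\jb{\nabla}$ applied to the first term over $(-\infty,0]$ equals $\|\jb{\nabla}e^{is\Delta}\varphi^1\|_{L_{s,x}^{\frac{2(d+2)}{d}}((-\infty,-t_n^1]\times\R^d)}$, which tends to $0$ as $-t_n^1\to-\infty$ since $\jb{\nabla}e^{is\Delta}\varphi^1\in L_{s,x}^{\frac{2(d+2)}{d}}(\R\times\R^d)$ by the Strichartz estimate; with $\|w_n^1\|_{H^1}\to0$ this gives $\|\jb{\nabla}e^{it\Delta}\phi_n\|_{L_{t,x}^{\frac{2(d+2)}{d}}((-\infty,0]\times\R^d)}\to0$. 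Since $\|\phi_n\|_{H^1}$ is bounded, Proposition~\ref{th4.1}(i) applied on $(-\infty,0]$ produces the solution $u_c(\cdot+t_n)$ there with small $\|\jb{\nabla}u_c(\cdot+t_n)\|_{L_{t,x}^{\frac{2(d+2)}{d}}((-\infty,0])}$ and $\|u_c(\cdot+t_n)\|_{S^1((-\infty,0])}<\infty$, whence, recovering the $L_{t,x}^{\frac{(d+2)(p-1)}{2}}$-norm by the interpolation used in the proof of Lemma~\ref{le3.3v11}, $\|u_c\|_{K((-\infty,t_n])}=\|u_c(\cdot+t_n)\|_{K((-\infty,0])}<\infty$, contradicting $\|u_c\|_{K((-\infty,t_n])}=\infty$. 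Symmetrically, $t_n^1\to-\infty$ contradicts $\|u_c\|_{K([t_n,\infty))}=\infty$. Hence, after passing to a further subsequence, $t_n^1\to t_\infty^1\in\R$ and $\theta_n^1\to\theta_\infty^1$; by continuity of $e^{it\Delta}$ on $H^1$ we get $e^{-it_n^1\Delta}\varphi^1\to e^{-it_\infty^1\Delta}\varphi^1$, so, with $\phi:=e^{i\theta_\infty^1}e^{-it_\infty^1\Delta}\varphi^1\in H^1(\R^d)$ and $x_n:=-x_n^1$,
\[
\|u_c(t_n,\cdot-x_n)-\phi\|_{H^1(\R^d)}\longrightarrow0 .
\]
Since $\{t_n\}$ was arbitrary, choosing $x(t)$ in the usual (measurable) way shows that $\{u_c(t,\cdot-x(t)):t\in\R\}$ is precompact in $H^1(\R^d)$. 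The main obstacle is precisely this second step: one must play the sign of $t_n^1$ against the fact, recorded above, that $u_c$ fails to scatter in \emph{both} time directions; the remaining ingredients are transcriptions of the bookkeeping and approximation arguments already established in the extraction of $u_c$.
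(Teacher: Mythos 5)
Your proof is correct and follows essentially the same route as the paper: the paper likewise applies the Proposition \ref{pr5.8} extraction argument to $u_c(\cdot+t_n)$, obtaining $u_c(t_n)\approx e^{-it_n'\Delta}\phi(\cdot-x_n')$ with vanishing $H^1$ remainder, and then rules out $t_n'\to\pm\infty$ exactly as you do, by producing a solution with small and hence finite scattering norm on a forward or backward half-line, contradicting \eqref{eq6.161v3}, so that $t_n'$ stays bounded and compactness modulo translations follows. The only small inaccuracy is your appeal to the blow-up criterion, Proposition \ref{th4.1}(iv), for the fact that $\|u_c\|_{K((-\infty,T])}=\|u_c\|_{K([T,\infty))}=\infty$ for every $T$: since $u_c$ is global, this instead follows from the finiteness of Strichartz norms on compact time intervals provided by the local theory, which is what the paper uses implicitly as well, so the proof is unaffected.
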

\begin{proof}
For $\{t_n\} \subset \R$,
if $t_n \to t^* \in \R,\  \text{ as } n\to \infty$, then we see by the continuity of $u_c(t)$ in $t$ that
\begin{equation*}
u_c(t_n) \to u_c(t^*) \ \text{ in }  H^1(\R^d), \text{  as  } n \to \infty.
\end{equation*}
 If $t_n \to \infty$. Applying the above argument as deriving \eqref{eq6.183v3} to $u_c(t + t_n)$, there exist $(t_n', x_n')\in \R\times \R^d$ and $\phi\in H^1(\R^d)$ that
\begin{equation*}
u_c(t_n,x) - e^{-it_n' \Delta} \phi(x - x_n') \to 0 \ \text{  in }  H^1(\R^d), \text{ as } n\to \infty.
\end{equation*}
{\rm (i)} If $t_n' \to - \infty$, then we have
\begin{align*}
\norm{\jb{\nabla} e^{it\Delta} u_c(t_n)}_{L_{t,x}^\frac{2(d+2)}d([0,\infty)\times \R^d)}
= \norm{\jb{\nabla} e^{it\Delta} \phi}_{L_{t,x}^\frac{2(d+2)}d([-t_n',\infty) \times \R^d)} + o_n(1)
\to 0, \text{ as  } n \to \infty.
\end{align*}
Hence, we can solve \eqref{eq1.1} for $t > t_n$ globally by iteration with small Strichartz norm when $n$ large enough , which
contradicts
\begin{equation*}
\norm{u_c}_{L_{t,x}^\frac{2(d+2)}d \cap L_{t,x}^\frac{(d+2)(p-1)}2([0,\infty)\times \R^d)} = \infty.
\end{equation*}
{\rm (ii)} If $t_n' \to \infty$, then we have
\begin{align*}
\norm{\jb{\nabla} e^{it\Delta} u_c(t_n)}_{L_{t,x}^\frac{2(d+2)}d((-\infty,0]\times \R^d)}%
= \norm{\jb{\nabla} e^{it\Delta} \phi}_{L_{t,x}^\frac{2(d+2)}d((-\infty,-t_n'] \times \R^d)} + o_n(1)
\to 0, \text{ as  } n \to \infty.
\end{align*}
 Hence, $u_c$ can solve \eqref{eq1.1} for $t < t_n$ when $n$ large enough with diminishing Strichartz norm, which contradicts
\begin{equation*}
\norm{u_c}_{L_{t,x}^\frac{2(d+2)}d \cap L_{t,x}^\frac{(d+2)(p-1)}2((-\infty,0])\times \R^d)} = \infty.
\end{equation*}
  Thus $t_n'$ is bounded, which implies that $t_n'$ is precompact, so is $u_c(t_n, x+ x_n')$ in $H^1(\R^d)$.

Similar argument makes sense when $t_n \to -\infty$, we will omit the proof.
\end{proof}
We define for $ R > 0$, $x_0\in \R^d$,
\begin{equation*}
\tilde{\E}_{R,x_0}(u(t)) = \int_{|x - x_0|\ge R} |\nabla u(t)|^2 + |u(t)|^\frac{2(d+2)}d + |u(t)|^{p+1} \,\mathrm{d}x, 
\end{equation*}
then by the compactness of the critical element, we have
\begin{cor}\label{co5.11}
 Let $u_c$ be the critical element in Proposition \ref{pr5.8}, then for any $ \epsilon > 0$, there exist $R_0(\epsilon) > 0$ and $x(t): \R \to \R^d$ such that
\begin{equation*}
\tilde{\E}_{R_0, x(t)}(u_c(t)) \le \epsilon \E(u_c), \ \forall \,t \in \R.
\end{equation*}
\end{cor}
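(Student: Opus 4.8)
The plan is to read the corollary off from the $H^1$-precompactness of the orbit of $u_c$ modulo spatial translations, Proposition~\ref{pr5.9}, together with the elementary fact that a precompact family in $L^q(\R^d)$ has uniformly small tails at spatial infinity. Proposition~\ref{pr5.9} provides a path $x(t):\R\to\R^d$ — which, since the translation parameter is only determined up to sign, we may take so that the family
\begin{equation*}
\mathcal{O}:=\{v_t:t\in\R\},\qquad v_t(\cdot):=u_c(t,\cdot+x(t)),
\end{equation*}
is precompact in $H^1(\R^d)$. Changing variables $y=x-x(t)$ in the definition of $\tilde\E$ then gives the exact identity
\begin{equation*}
\tilde\E_{R,x(t)}(u_c(t))=\int_{|y|\ge R}|\nabla v_t|^2+|v_t|^\frac{2(d+2)}d+|v_t|^{p+1}\,\mathrm{d}y,\qquad v_t\in\mathcal{O},
\end{equation*}
so it suffices to make this tail integral small, uniformly over $\mathcal{O}$, by taking $R$ large, and then to calibrate the threshold against $\E(u_c)$.

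I would first record that $\E(u_c)>0$, so that the inequality to be proved is genuinely nonvacuous; note also that $\E(u_c)$ is independent of $t$ by the conservation law in Proposition~\ref{th4.1}(iii). Since $u_c(t)\in A_{\omega,+}$ we have $\K(u_c(t))\ge 0$, so Lemma~\ref{le2.6} gives $\E(u_c)\gtrsim\|\nabla u_c\|_{L^2}^2$; moreover $u_c\not\equiv 0$ because $\cS_\om(u_c)=m_\om^*>0$, hence $\nabla u_c\not\equiv 0$ (a nonzero $H^1(\R^d)$ function cannot be constant), and therefore $\E(u_c)>0$.

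Next I would prove the tightness statement: for every $\eta>0$ there is $R_0=R_0(\eta)>0$ such that
\begin{equation*}
\sup_{v\in\mathcal{O}}\Big(\|\nabla v\|_{L^2(|y|\ge R_0)}^2+\|v\|_{L^{\frac{2(d+2)}d}(|y|\ge R_0)}^{\frac{2(d+2)}d}+\|v\|_{L^{p+1}(|y|\ge R_0)}^{p+1}\Big)\le\eta.
\end{equation*}
The exponents $\tfrac{2(d+2)}d$ and $p+1$ lie in $[2,\tfrac{2d}{d-2}]$ for $d\ge3$ and in $[2,\infty)$ for $d=1,2$, so the Sobolev embeddings $H^1(\R^d)\hookrightarrow L^{2(d+2)/d}(\R^d)$ and $H^1(\R^d)\hookrightarrow L^{p+1}(\R^d)$ are continuous and carry the $H^1$-precompact set $\mathcal{O}$ to precompact subsets of those $L^q$ spaces; similarly $\{\nabla v:v\in\mathcal{O}\}$ is precompact in $L^2(\R^d;\C^d)$. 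It then suffices to know that any precompact $\mathcal{F}\subset L^q(\R^d)$ is uniformly tight at infinity, which is standard: cover $\mathcal{F}$ by finitely many $L^q$-balls centered at $g_1,\dots,g_N$, choose $R_0$ so large that each $\|g_i\|_{L^q(|y|\ge R_0)}$ is tiny, and use the triangle inequality. Running this for the three families above and intersecting the finitely many radii yields $R_0(\eta)$.

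Finally, given $\epsilon>0$ I would put $\eta:=\epsilon\,\E(u_c)>0$, let $R_0=R_0(\eta)$ be as above, and conclude from the displayed identity that $\tilde\E_{R_0,x(t)}(u_c(t))\le\eta=\epsilon\,\E(u_c)$ for every $t\in\R$, which is the assertion. I expect the only delicate point to be the $L^{p+1}$ tail in the energy-critical case $p=1+\tfrac4{d-2}$, where $p+1=\tfrac{2d}{d-2}$ and Rellich-type compactness is unavailable; but this is not actually needed, since norm-precompactness in $H^1$ passes through the \emph{continuous} embedding $H^1(\R^d)\hookrightarrow L^{2d/(d-2)}(\R^d)$ to norm-precompactness in $L^{2d/(d-2)}$, and precompact subsets of any $L^q(\R^d)$ are uniformly tight — so the argument is uniform in $p$.
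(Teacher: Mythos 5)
Your proof is correct and follows exactly the route the paper intends: the paper states the corollary as an immediate consequence of the $H^1$-precompactness modulo translations from Proposition \ref{pr5.9}, and your argument (Sobolev embedding carries the precompact orbit to precompact subsets of $L^2$, $L^{\frac{2(d+2)}d}$, $L^{p+1}$, precompact sets in $L^q(\R^d)$ have uniformly small tails, and $\E(u_c)>0$ makes the calibration legitimate) is precisely the standard filling-in of that step.
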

\begin{rem}
In particular, for the radial data $u_0\in H^1$, by the same argument
as in \cite{Miao-Xu-Zhao1}, we have $x(t)\equiv 0$, i.e.
$$\tilde{\E}_{R_0, 0}(u_c(t)) \le \epsilon \E(u_c), \ \forall \,t \in \R.$$
\end{rem}

\section{Extinction of the critical element}\label{se5}
In this section, we prove the non-existence of the critical element by deriving a contradiction from Proposition \ref{pr5.9} and the Virial identity in the radial case.

For a bounded real function $\phi\in C^\infty(\R^d)$, we can define the virial quantity:
\begin{equation}\label{eq6.1v10}
V_R(t) = \int_{\R^d} \phi_R(x)  |u(t,x)|^2  \,\mathrm{d}x, \text{ where }  \phi_R(x) = R^2 \phi\Big(\frac{|x|}R\Big), \,\forall \, R > 0.
\end{equation}
Then, for $u \in C(I; H^1(\R^d))$, we have
\begin{align}
V_R'(t)  = &  2R\cdot \, \Im \int_{\R^d} \phi'\Big(\frac{|x|}R\Big) \frac{x}{|x|}  \cdot \nabla u(t,x) \,\overline{u(t,x)} \,\mathrm{d}x, \label{eq6.2v10}\\
V_R''(t)  = &  4\Re \int \partial_j\partial_k \phi_R(x)
\overline{\partial_j u(t,x)} \partial_k u(t,x) \,\mathrm{d}x
- \int \Delta^2 \phi_R(x) |u_c(t,x)|^2 \,\mathrm{d}x  \nonumber \\
   &  - \tfrac{2(p-1)}{p+1} \int \Delta \phi_R(x) |u(t,x)|^{p+1} \,\mathrm{d}x
+ \tfrac4{d+2} \int \Delta \phi_R(x) |u(t,x)|^\frac{2(d+2)}d \,\mathrm{d}x. \label{eq6.3v10}
\end{align}
\begin{thm}
There does not exist the radial critical element $u_c$ of \eqref{eq1.1} in Proposition \ref{pr5.8}.
\end{thm}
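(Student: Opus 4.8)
The plan is to contradict a localized Virial (Morawetz-type) identity, exploiting that a \emph{radial} critical element is centered at the origin, so no spatial translation parameter can escape to infinity.

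\emph{Step 1: uniform positivity of $\K$ along the flow.} Since $u_c(t)\in A_{\om,+}$ with $\cS_\om(u_c(t))=m_\om^*<m_\om$ for every $t$, Proposition~\ref{le2.9} (estimate \eqref{eq2.14}) gives
\[
\K(u_c(t))\ge\min\Big\{\tfrac{d(p-1)-4}{d(p-1)}\big(\norm{\na u_c(t)}_{L^2}^2+\tfrac d{d+2}\norm{u_c(t)}_{L^\frac{2(d+2)}d}^\frac{2(d+2)}d\big),\ \de\,(m_\om-m_\om^*)\Big\},\quad t\in\R .
\]
The second quantity is a fixed positive number. For the first, note that $u_c\not\equiv 0$ (because $\cS_\om(u_c)=m_\om^*>0$), and that by Proposition~\ref{pr5.9} together with the Remark following Corollary~\ref{co5.11} ($x(t)\equiv 0$ in the radial case) the orbit $\{u_c(t):t\in\R\}$ is precompact in $H^1(\R^d)$; hence if $\norm{\na u_c(t_n)}_{L^2}\to 0$ along some sequence, a subsequential $H^1$-limit would have vanishing gradient, thus be the zero function, forcing $\cS_\om(u_c(t_n))\to 0\ne m_\om^*$, a contradiction. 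Therefore $\inf_{t}\norm{\na u_c(t)}_{L^2}>0$, and there is $\e_0>0$ with $\K(u_c(t))\ge\e_0$ for all $t\in\R$.

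\emph{Step 2: the truncated Virial identity.} Fix a radial $\phi\in C^\infty(\R^d)$ with $\phi\ge 0$, $\phi(r)=r^2$ for $r\le 1$, $\phi$ constant for $r\ge 2$, and $\phi,\na\phi,\na^2\phi,\De^2\phi$ bounded, and form $V_R(t)$ as in \eqref{eq6.1v10}. On $\{|x|\le R\}$ one has $\phi_R(x)=|x|^2$, so comparing \eqref{eq6.3v10} with the defining formula \eqref{eq1.6} of $\K$ yields
\[
V_R''(t)=8\,\K(u_c(t))+\mathcal{R}_R(t),
\]
where the remainder $\mathcal{R}_R(t)$ collects only contributions from $\{|x|\ge R\}$, on which $\na^2\phi_R$ and $\De\phi_R$ are $O(1)$ and $\De^2\phi_R$ is $O(R^{-2})$; hence $|\mathcal{R}_R(t)|\lesssim \tilde{\E}_{R,0}(u_c(t))+R^{-2}\M(u_c)$, where $\M(u_c)$ and $\E(u_c)$ are the (conserved, finite) mass and energy. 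Justifying \eqref{eq6.2v10}--\eqref{eq6.3v10} for the $H^1$-solution $u_c$ is standard in the range of $d,p$ considered.

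\emph{Step 3: compactness and the contradiction.} By Corollary~\ref{co5.11} (with $x(t)\equiv 0$), for every $\e>0$ there is $R_0(\e)$ with $\tilde{\E}_{R,0}(u_c(t))\le\e\,\E(u_c)$ for all $t$ and all $R\ge R_0(\e)$. Choosing $\e$ small and then $R$ large — all depending only on $d,p,\om$ and $u_c$ — makes $|\mathcal{R}_R(t)|\le 4\e_0$ uniformly in $t$, whence $V_R''(t)\ge 8\e_0-4\e_0=4\e_0>0$ for all $t\in\R$. Then $V_R'$ is strictly increasing with $V_R'(t)\ge V_R'(0)+4\e_0\,t$, so $V_R(t)\to+\infty$ as $t\to+\infty$. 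This contradicts the uniform bound $0\le V_R(t)\le\norm{\phi}_{L^\infty}R^2\,\M(u_c)$ (mass conservation, with the now-fixed $R$). Hence no radial critical element $u_c$ can exist, which is the assertion. The crux is Step~1 — extracting a genuinely uniform lower bound $\K(u_c(t))\ge\e_0>0$ from the two competing alternatives in \eqref{eq2.14} — together with ensuring in Step~3 that the boundary error $\mathcal{R}_R$ stays strictly smaller than the bulk term $8\,\K(u_c(t))$; this is exactly where the precompactness of the orbit, and through the Remark the radial hypothesis that freezes $x(t)\equiv 0$, are indispensable, while the remaining estimates are routine.
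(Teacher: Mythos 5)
Your proposal is correct and follows essentially the same route as the paper: a truncated Virial identity $V_R''=8\K(u_c)+O(\tilde{\E}_{R,0}(u_c)+R^{-2}\M(u_c))$, a uniform positive lower bound on $\K(u_c(t))$ coming from Proposition \ref{le2.9} (with the tail made small via Corollary \ref{co5.11} and $x(t)\equiv 0$ in the radial case), leading to unbounded growth of the localized quantity. The only cosmetic differences are that you close the argument by comparing $V_R''\ge 4\e_0$ with the bound $V_R(t)\lesssim R^2\M(u_c)$ (the paper instead uses $|V_R'(t)|\lesssim R$ and integrates once), and you obtain the uniform lower bound on the first alternative in \eqref{eq2.14} from precompactness of the orbit rather than from the energy-trapping Lemma \ref{le2.6}; both variants are valid.
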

\begin{proof}
  Let the weight function $\phi$ in \eqref{eq6.1v10} be a smooth, radial function satisfying $0 \le \phi \le 1$, and
\begin{equation*}
\phi(x) =
\begin{cases}
|x|^2,  & \ |x|\le 1,\\
0, & \ |x|\ge 2.
\end{cases}
\end{equation*}
On the one hand,
by \eqref{eq6.2v10}, we have
\begin{equation}\label{eq5.4v6}
|V_R'(t)| \lesssim R, \  \forall\, R >0,\  t\in \R.
\end{equation}

On the other hand, by \eqref{eq6.3v10}, we have
\begin{equation}\label{eq5.5v6}
\begin{split}
V_R''(t)
 = &\   4\int \phi_R''(r) |\nabla u_c(t,x)|^2 \,\mathrm{d}x - \int \Delta^2 \phi_R(x) |u_c(t,x)|^2 \,\mathrm{d}x\\
 & - \tfrac{2(p-1)}{p+1} \int \Delta \phi_R(x) |u_c(t,x)|^{p+1} \,\mathrm{d}x
+ \tfrac4{d+2} \int \Delta \phi_R(x) |u_c(t,x)|^\frac{2(d+2)}d \,\mathrm{d}x\\
 =  &  \  8 \K(u_c) + \tfrac{C}{R^2} \int_{R\le |x|\le 2R} |u_c|^2 \,\mathrm{d}x\\
  &+ C\int_{R\le |x|\le 2R} |\nabla u_c(t)|^2 +  |u_c(t)|^{p+1} + |u_c(t)|^\frac{2(d+2)}d \,\mathrm{d}x.
\end{split}
\end{equation}
By Lemma \ref{le2.9} and Lemma \ref{le2.6}, we have
\begin{align*}
8 \K(u_c)  \ge & \, \min\Big\{\tfrac{d(p-1)-4}{d(p-1)}\Big(\norm{\nabla
u_c(t)}_{L^2}^2 + \tfrac{d}{d+2}
 \norm{u_c(t)}_{L^\frac{2(d+2)}d}^\frac{2(d+2)}d\Big), \delta\Big(m_\omega -
 \cS_\omega(u_c(t))\Big)\Big\}\\
\gtrsim & \, \E(u_c(t)).
\end{align*}
Thus, choosing $\epsilon > 0$ small enough and $R = R(\epsilon)$ large enough, then by Corollary \ref{co5.11} with $x(t)\equiv0$, we get
\begin{equation*}
V_R''(t) \gtrsim \E(u_c(t)) = \E(u_0).
\end{equation*}
This together with \eqref{eq5.4v6} implies for $T > 0$,
\begin{equation*}
T \cdot  \E(u_0)\lesssim \left|\int_0^T V_R''(t)\,\mathrm{d}t\right| = |V_R'(T) - V_R'(0)|  \lesssim R.
\end{equation*}
Taking $T$ large enough, we obtain a contradiction unless $u_c \equiv 0$, which is impossible due to
$\norm{u_c}_{L_{t,x}^\frac{2(d+2)}d \cap L_{t,x}^\frac{(d+2)(p-1)}2(\R \times \R^d)}  = \infty$.
\end{proof}
\section{Blow-up}
  We will show the blow-up result in Theorem \ref{th1.4}.

  Let the weight function $\phi$ in \eqref{eq6.1v10} be a smooth, radial function(\cite{Ogawa-Tsutsumi1}) satisfying
$\phi(r) = r^2$ for $r \le 1$, $ \phi''(r) \le 2$ and $\phi(r)$ is constant for $r \ge 3$.

By \eqref{eq6.3v10}, we have
\begin{equation}\label{eq7.2v7}
\begin{split}
V_R''(t)
 \le & \
4\int 2|\nabla u|^2 - \tfrac{d(p-1)}{p+1} |u|^{p+1} + \tfrac{2d}{d+2} |u|^\frac{2(d+2)}d\,\mathrm{d}x\\
 & + \tfrac{C}{R^2} \int_{R\le |x|\le 3R} |u(t,x)|^2 \,\mathrm{d}x + C \int_{R \le |x|\le 3R} |u|^{p+1} + |u|^\frac{2(d+2)}d \,\mathrm{d}x\\
 = & \  8 \K(u) + \tfrac{C}{R^2} \int_{R\le |x|\le 3R} |u|^2 \,\mathrm{d}x + C\int_{R\le |x|\le 3R} |u(t)|^{p+1} + |u(t)|^\frac{2(d+2)}d \,\mathrm{d}x.
\end{split}
\end{equation}
Since $u$ is radial, we have the following radial Sobolev inequalities
\begin{align*}
\|u\|_{L^{p+1}(|x|\ge R)}^{p+1} & \le \tfrac{C}{R^\frac{(d-1)(p-1)}2} \norm{u}_{L^2(|x|\ge R)}^\frac{p+3}2 \norm{\nabla u}_{L^2(|x|\ge R)}^\frac{p-1}2,\\
\|u\|_{L^\frac{2(d+2)}d (|x|\ge R)}^\frac{2(d+2)}d & \le
\tfrac{C}{R^\frac{2(d-1)}d } \norm{u}_{L^2(|x|\ge R)}^\frac{2(d+1)}d
\norm{\nabla u}_{L^2(|x|\ge R)}^\frac2d,
\end{align*}
this together with \eqref{eq7.2v7}, the mass conservation and Young's inequality shows
$\forall\, \epsilon > 0$, there exists $R$ large enough that
\begin{align}\label{eq7.2v10}
 V_R''(t) & \le 8 \K(u) + \epsilon \|\nabla u(t)\|_{L^2}^2 + \epsilon.
\end{align}
By $\K(u) < 0$, energy, mass conservation and Lemma \ref{le2.7}, we see
\begin{equation*}
\K(u(t)) < - \left(m_\omega - \cS_\omega(u(t))\right),\ \forall\, t \in I_{max},
\end{equation*}
thus
\begin{equation*}
\|\nabla u(t)\|_{L^2}^2 < \tfrac{d(p-1)}{2(p+1)}
\|u(t)\|_{L^{p+1}}^{p+1} - (m_\omega - \cS_\omega(u)).
\end{equation*}
So we have by \eqref{eq7.2v10},
\begin{align*}
V_R''(t) &  \le 8 \K(u) + \epsilon \|\nabla u(t)\|_{L^2}^2 + \epsilon \\
         &  = 16 \cS_\omega(u) - 8\omega \|u\|_{L^2}^2 + \tfrac{16- 4d(p-1)}{p+1} \|u(t)\|_{L^{p+1}}^{p+1} + \epsilon \|\nabla u(t)\|_{L^2}^2 + \epsilon\\
          & <
16 \cS_\omega(u) - 8\omega \|u\|_{L^2}^2 +
\left(\tfrac{16-4d(p-1)}{p+1} +
\tfrac{d(p-1)\epsilon}{2(p+1)}\right) \|u(t)\|_{L^{p+1}}^{p+1}
         - \epsilon ( m_\omega - \cS_\omega(u)) + \epsilon.
\end{align*}
Here we take $\epsilon > 0$ small enough such that
$\tfrac{16-4d(p-1)}{p+1} + \tfrac{d(p-1)\epsilon}{2(p+1)}< 0.$

We also note by $\K(u) < 0$ and Proposition \ref{le2.4},
\begin{equation*}
m_\omega \le \cH_\omega(u(t)) = \tfrac{\omega}2 \|u(t)\|_{L^2}^2 +
\tfrac{d(p-1)-4}{4(p+1)} \|u(t)\|_{L^{p+1}}^{p+1},
\end{equation*}
so
\begin{equation*}
\tfrac{4(p+1)}{d(p-1)-4}  \left(m_\omega - \tfrac{\omega}2
\|u(t)\|_{L^2}^2\right) \le \|u(t)\|_{L^{p+1}}^{p+1}.
\end{equation*}
Thus, we have
\begin{align*}
 V_R''(t)  \le & \  16 \cS_\omega(u) - 8 \omega \|u\|_{L^2}^2 - \epsilon( m_\omega - \cS_\omega(u)) + \epsilon \\
   &  + \left(\tfrac{16-4d(p-1)}{p+1} + \tfrac{d(p-1)\epsilon}{2(p+1)}\right) \tfrac{4(p+1)}{d(p-1)-4} \left(m_\omega - \tfrac{\omega}2 \|u(t)\|_{L^2}^2\right).
\end{align*}
By $\cS_\omega(u)  < m_\omega$ and the energy, mass conservation, we see there exists $\delta_1 > 0$ small enough such that $\cS_\omega(u) \le (1 - \delta_1)m_\omega$.
 Then, we get
\begin{align*}
 V_R''(t)   \le  &\  16(1-\delta_1) m_\omega - 8\omega \|u\|_{L^2}^2 - \epsilon (m_\omega - \cS_\omega(u)) + \epsilon \\
  & +\left( - \tfrac{4d(p-1)-16}{p+1} + \tfrac{d(p-1)\epsilon}{2(p+1)}\right) \tfrac{4(p+1)}{d(p-1)-4} \left( m_\omega - \tfrac{\omega}2 \|u\|_{L^2}^2\right)\\
   = &  - \left(16\delta_1 + \epsilon \delta_1 - \tfrac{4(p+1)}{d(p-1)-4} \tfrac{d(p-1)\epsilon}{2(p+1)}\right) m_\omega
 - \tfrac{d(p-1)\epsilon}{2(p+1)} \cdot \tfrac{4(p+1)}{d(p-1)-4}\cdot  \frac{\omega}2 \|u\|_{L^2}^2 + \epsilon\\
  \le  &
 -\left(16\delta_1 + \epsilon \delta_1 - \tfrac{2d(p-1)\epsilon}{d(p-1)-4} \right) m_\omega + \epsilon.
\end{align*}
We can take $\epsilon > 0$ small enough that $V_R''(t) \le -4\delta_1 m_\omega$, which implies that $u$ must blow up in finite time.
\begin{rem}
The blowup is shown for $p \le 5$, which leads to the restriction of the blowup result to $d\ge 2$. This is a technical restriction. See also \cite{Holmer-Roudenko, Ogawa-Tsutsumi1, Ogawa-Tsutsumi2} for some related discussion.
\end{rem}

\textbf{Acknowledgements} The authors  were  supported by the NSF of China under
grant No. 10901148, No.11171033 and 11231006.

\end{document}